\newtheorem{theorem}{Theorem}[section]
\newtheorem{lemma}[theorem]{Lemma}
\newtheorem{proposition}[theorem]{Proposition}
\newtheorem{problem}{Problem}[section]
\newtheorem{remark}[theorem]{Remark}
\newtheorem{definition}{Definition}[section]
\newtheorem{assumption}{Assumption}[section]
\date{\displaydate{date}}
\numberwithin{equation}{section}
\newcommand\reallywidehat[1]{%
\savestack{\tmpbox}{\stretchto{%
  \scaleto{%
    \scalerel*[\widthof{\ensuremath{#1}}]{\kern.1pt\mathchar"0362\kern.1pt}%
    {\rule{0ex}{\textheight}}
  }{\textheight}%
}{2.4ex}}%
\stackon[-6.9pt]{#1}{\tmpbox}%
}
\begin{document}

\title{\LARGE \bf Identification of cavities and inclusions in linear elasticity with a phase-field approach}

\author[1]{Andrea Aspri}
\author[2]{Elena Beretta}
\author[3,5]{Cecilia Cavaterra}
\author[1,5]{Elisabetta Rocca}
\author[4]{Marco Verani}
\affil[1]{Department of Mathematics, Università degli Studi di Pavia}
\affil[2]{Department of Mathematics, NYU Abu Dhabi}
\affil[3]{Department of Mathematics, Università degli Studi di Milano}
\affil[4]{MOX, Department of Mathematics, Politecnico di Milano}
\affil[5]{IMATI-CNR Pavia}

\vspace{5mm}

\date{}

\maketitle

\thispagestyle{plain}
\pagestyle{plain}

\let\thefootnote\relax\footnotetext{
AMS 2020 subject classifications: 35R30, 65N21, 74G75

Key words and phrases: inverse problems, cavity, phase-field, linear elasticity, primal dual active set method

\thanks{}

}

\begin{abstract}
In this paper we deal with the inverse problem of determining  cavities and inclusions embedded in a linear elastic isotropic medium from boundary displacement's measurements. For, we consider a constrained minimization problem involving a boundary quadratic misfit functional with a regularization term that penalizes the perimeter of the cavity or inclusion to be identified.
Then using a phase field approach we derive a robust algorithm for the reconstruction of elastic inclusions and of cavities modelled as inclusions with a very small elasticity tensor. 
\end{abstract}

\vskip 10truemm

\section{Introduction}
The focus of this paper is the reconstruction of cavities and inclusions embedded in an elastic isotropic medium  by means of boundary tractions and displacements. 
Identification of defects from boundary measurements plays an important role in non-destructive testing for damage assessment of mechanical specimens, which are possibly defective due to the presence of interior voids or cavities appearing during the manufacturing process, see, for instance, \cite{EilUrb18,KurMarIya17,NgoKasImbNguHui18,TroWelElv18} for possible applications to 3D-printing and additive manufacturing. {This kind of inverse problems has application also in medical imaging and in particular in elastography, a modality mapping the elastic properties and stiffness of soft tissue, \cite{Amm08,AmmBreEliGarJossKan15,Ammari,DouCara20,ShShQiZhCh21,ShKrHubDrSch20, WidSch15} (to cite a few), and in reflection seismology \cite{BrJaiKno05,Sym09}, a non invasive technique used by the oil and gas industry to map petroleum deposits in the Earth's upper crust and based on seismic data from land acquisition, see for example \cite{Shi}. We also mention some applications in volcanology, see for example \cite{Asp19,AspBerMas17,Seg10} and references therein.}

The underlying mathematical model is the following: Consider a bounded domain $\Omega\subset \mathbb{R}^d$, with $d=2,3$, representing the region occupied by an elastic isotropic medium and let $\partial\Omega=\Sigma_D \cup\Sigma_N$, with $\Sigma_D$ closed. Let the displacement field $u$ be solution to the following mixed boundary value problem for the Lam\'e system of linearized elasticity: 
\begin{equation}\label{cavprob}
    \begin{cases}
    {\rm div}(\mathbb{C}_0\widehat\nabla u) = 0 &  \text{in } \Omega\setminus \overline{C},	\\
    (\mathbb{C}_0 \widehat\nabla u) n = 0 & \text{on}\ \partial C,\\
	(\mathbb{C}_0 \widehat\nabla u) \nu = g & \text{on } \Sigma_N,\\
		u=0 & \text{on } \Sigma_D,
    \end{cases}
\end{equation}
where $C\Subset\Omega$ is a cavity with Lipschitz boundary, and $\widehat{\nabla}u$ is the strain tensor.  $\mathbb{C}_0$ is a fourth-order isotropic elastic tensor, uniformly bounded and strongly convex, and $n$ and $\nu$ are the outer unit normal vector to $\partial C$ and $\Sigma_N$, respectively. The Neumann boundary datum $g$ is assumed  to be in $L^2(\Sigma_N)$. 

The forward problem consists in finding the elastic displacement $u$ in the elastic body occupying the region $\Omega$ induced by the tractions on $\Sigma_N$, given the cavity $C$. 
The inverse problem concerns the determination of the cavity $C$ from partial observations of $u$ on the boundary. More precisely, given measurements of the displacement, i.e. $u_{meas}\in L^2(\Sigma_N)$, find $C$ contained in $\Omega$, such that $u\lfloor_{\Sigma_N}=u_{meas}$, where $u\in H^1_{\Sigma_D}(\Omega\setminus C)$ is the solution to the forward problem. 

It is well known that this problem is severely ill-posed and only a very weak logarithmic conditional stability holds, assuming a-priori $C^{1,\alpha}$ regularity of the unknown cavities \cite{MR04}. A similar weak stability result holds also in the case of the determination of elastic inclusions, see for example \cite{MR16}. Hence, in general, the reconstruction of cavities and inclusions turns out to be a challenging issue.

To solve the problem we follow a similar strategy as in \cite{BRV2018,DEV2016} and  the one in \cite{BCP2021} for the reconstruction of conductivity inclusions and cavities respectively.   
Specifically, we consider the problem of minimizing the functional
\begin{equation}\label{defJ}
    J(C)=\frac12 \int_{\Sigma_N} |u(C)-u_{meas}|^2\, d\sigma(x)+\alpha \textrm{Per}(C),
\end{equation}
over a suitable set of cavities of finite perimeter and where $u(C)$ is the solution of (\ref{cavprob}) for a given cavity $C$, $\textrm{Per}(C)$ indicates the perimeter of $C$, and $\alpha$ is a positive regularization parameter.

We first investigate the continuity of solutions to (\ref{cavprob}) with respect to perturbations of the cavity $C$ in the Hausdorff distance topology and prove it using the Mosco convergence, see \cite{BB05,BHSZ01,G04}.
Similarly as in \cite{BCP2021}, continuity then allows us  to prove existence of minima of the functional $J(C)$, stability with respect to noisy data and convergence of the minimizers as $\alpha\to 0$ to the solution of the inverse problem. 

In the second part of the paper, we use a suitable phase-field relaxation of the functional $J$ in order to overcome issues arising from non-convexity and non-differentiability. 
To be more precise, we employ an idea adopted by Bourdin and Chambolle, \cite{BouCham03} in the context of topology optimization which consists in filling  the cavity with a fictitious elastic material described by an elastic tensor $\mathbb{C}_1:=\delta\mathbb{C}_0$, where $\delta$ is a small positive parameter and $\mathbb{C}_0$ has been extended to the whole domain $\Omega$. In this way, we transform the original inverse problem in the one of reconstructing  an elastic inclusion. {Then, since the identification of sharp interfaces is in general difficult to be treated numerically, we use a phase-field approach. Instead of binary (i.e., either 0 or 1) phase parameter $v$ describing  sharp interfaces between regions with two different materials we use a phase parameter $v$ as a $H^1$ scalar field, taking values in the interval $[0, 1]$. Then, we approximate the functional $J$ in \eqref{defJ} by means of a Ginzburg-Landau type functional (cf. \cite{Mod87})  
\begin{equation}
J_{\delta,\varepsilon}(v) := \frac12 \int_{\Sigma_N} |u_{\delta}(v)-u_{meas}|^2\, d\sigma(x)
+ \frac{4\alpha}{\pi} \int_{\Omega}\Big( \varepsilon|\nabla v|^2 + \frac{1}{\varepsilon}v(1-v)\Big)\, dx,\label{minrelI}
\end{equation}
where $\varepsilon$ is a small positive parameter, $\frac{4}{\pi}$ is a rescaled parameter in the Modica-Mortola relaxation of the perimeter, $u_\delta(v)$ denotes the solution of the modified boundary value problem: 
\begin{equation}
	\left\{
	\begin{aligned}
		\textrm{div}(\mathbb{C}_{\delta}(v) \widehat{\nabla} u_{\delta}(v)) &= 0 \qquad \text{in}\ \Omega,	\\
		(\mathbb{C}_{\delta}(v) \widehat\nabla u_{\delta}(v)) \nu &= g \qquad \text{on } \Sigma_N,\\
		u_{\delta}(v)&=0 \qquad \text{on } \Sigma_D,
	\end{aligned}
	\right.
	\label{elasticityI}
\end{equation}
where
\begin{equation}\label{eq:elasticity_tensor_inclusionI}
\mathbb{C}_{\delta}(v)= \mathbb{C}_0 +  (\mathbb{C}_1 - \mathbb{C}_0)v,\quad \textrm{with}\quad \mathbb{C}_1=\delta\mathbb{C}_0.
\end{equation}
Here $\mathbb{C}_0$ and $\mathbb{C}_1$ are the elasticity tensors in $\Omega \setminus C$ and $C$, respectively. 
Ideally, the optimal phase variable $v$ should be close to an ideal binary field. In fact, when $\varepsilon$ is small the potential term ($\int_{\Omega}\frac{1}{\varepsilon}v(1-v)\, dx$) prevails and the minimum is attained by a phase-field variable which takes mainly values close to $0$ and $1$ and the transition occurs in a thin layer of thickness of order $\varepsilon$.\ \\ 
The phase-field approach to structural optimization problems has been successfully used by different authors (cf., e.g., \cite{ABCHDRR20,BGHFS14,CRBHRA19,GLNS21}), the main advantage being the fact that it allows to handle topology changes as well as nucleation of new holes. }

{ To implement our algorithm in Section~\ref{secop} we provide first order necessary optimality conditions for the minimization problem associated to $J_{\delta,\varepsilon}$ whose discretized version is then employed in Section~\ref{sec:discretization} in order to develop the reconstruction algorithm. Minima of the functional $J_{\delta,\varepsilon}$ exist and the numerical experiments of Section 5 indicate that they are accurate approximations of minima of $J$, for $\varepsilon$ and $\delta$ sufficiently small. This fact could be rigorously justified proving that the $\Gamma$-convergence, as $\varepsilon$ and $\delta$ tend to $0$, to the functional $J$ holds, but this is still an open issue and will be the subject of a future research.}  
Some attempts along this direction have been done in the scalar case for example in \cite{BCP2021,RinRon11, Rondi11}.

The literature on reconstruction algorithms for identification of inclusions and cavities in elastostatic, viscoelastic and elastic waves systems is very rich and of big impact. In the case of small elastic inclusions or cavities, asymptotic expansions of the perturbed displacement have been used to detect position, size and shape from boundary measurements, see for example \cite{Kang03} and \cite{Ammari}.  The method followed in \cite{AmeBurHac07} is based on a shape derivative approach, both for elastic and thermoelastic problems. A topological gradient method has been applied in \cite{CarRap08}, for the detection of an elastic scatterer, and in \cite{CasFarGal12}, for identification of a cavity in time-harmonic wave elastic systems. Ikehata and Itou use the so-called  \textit{enclosure method} for the reconstruction of polygonal cavities in an elastostatic setting \cite{Ikehata11} and of a general cavity in a homogeneous isotropic viscoelastic body \cite{Ikehata12}. More recently, Doubova and Fern\'andez--Cara proposed an augmented Lagrangian method to identify rigid inclusions in a elastic waves system \cite{DouCara20}. Eberle and Harrach applied the \textit{monotonicity method} for the reconstruction of elastic inclusions using the monotonicity property of the Neumann-to-Dirichlet map \cite{EbeHar21}, and in  \cite{Lesnic} the authors used the method of fundamental solutions for the reconstruction of elastic cavities. For other reconstruction approaches we refer to the review paper \cite{BC} and references therein. 
Identification of cavities and elastic inclusions could be interpreted as a special case of the determination of Lam\'e parameters from boundary measurements, see for example \cite{AmmBreEliGarJossKan15,HubSheEkaNeuSch18} and \cite{Shi}.

The plan of the paper is the following. 
In Section \ref{sec:elastic problem} we investigate the continuity of the solution to the direct problem with respect to perturbations of the cavity in the Haussdorff topology and then derive the major properties of the misfit functional $J(C)$. In Section \ref{sec:filling the void} we consider the approximation of the cavity with an inclusion of small elasticity tensor, the corresponding misfit functional and its properties. We then introduce its phase-field relaxation and analyze its differentiability and derive necessary optimality conditions related to the phase-field minimization problem. In Section \ref{sec:discretization} we propose an iterative reconstruction algorithm allowing for the numerical approximation of the solution and prove its convergence properties. Finally, in Section 5 we present some numerical results showing the efficiency and robustness of the proposed reconstruction algorithm.

\subsection*{Notation and geometrical setting}
\label{Notation}
We introduce the principal notation used in the paper. 

\textit{Notation.} We denote scalar quantities, points, and vectors in italics, e.g.  ${x}, {y}$ and ${u}, {v}$, 
and fourth-order tensors in blackboard face, e.g.  $\mathbb{A}, \mathbb{B}$.

The symmetric part of a second-order tensor ${A}$ is denoted by
$\widehat{{A}}:=\tfrac{1}{2}\left({A}+{A}^T\right)$, where
${A}^T$ is the transpose matrix. In particular, $\widehat{\nabla} u$
represents the deformation tensor.
We utilize  standard notation for inner products, that is,
${u}\cdot {v}=\sum_{i} u_{i} v_{i}$, and  ${A}:
{B}=\sum_{i,j}a_{ij} b_{ij}$ ($B$ is a second-order tensor).
$|{A}|$ denotes the norm induced by the inner product on matrices:
	\begin{equation*}
		|{A}|=\sqrt{{A}:{A}}.
	\end{equation*}
\textit{Domains.} To represent locally a boundary as a graph of function, we adopt the notation:  $\forall\, x\in\mathbb{R}^d$, we set $x=(x',x_d)$, where $x'\in\mathbb{R}^{d-1}$, $x_d\in\mathbb{R}$, with $d=2,3$.
Given $r>0$, we denote by $B_{r}({x})\subset\mathbb{R}^d$ the set $B_{r}({x}):=\{(x',x_d)/\ |x'|^2+x_d^2<r^2\}$ and by $B'_{r}({x'})\subset\mathbb{R}^{d-1}$ the set $B'_{r}({x'}):=\{x'\in\mathbb{R}^{d-1}/\, |x'|^2<r^2\}$.
\begin{definition}[$C^{0,1}$ regularity]\label{def:lipschitz_domain}\ \\
		Let $\Omega$ be a bounded domain in $\mathbb{R}^d$. We say that a portion $\Sigma$ of $\partial \Omega$ is of Lipschitz class with constants $r_0$, $L_0$, if for any ${p}\in \Sigma$ there exists a rigid transformation of coordinates under which we have that ${p}$ is mapped to the origin  and
		\begin{equation*}
		\Omega\cap B_{r_0}({0})=\{{x}\in B_{r_0}({0})\, :\, x_d>\psi({x}')\},
		\end{equation*}
		where ${\psi}$ is a $C^{0,1}$ function on $B'_{r_0}({0})\subset \mathbb{R}^{d-1}$, such that
		\begin{equation*}
		\begin{aligned}
		{\psi}({0})&=0,\\
		\|{\psi}\|_{C^{0,1}(B'_{r_0}({0}))}&\leq L_0.
		\end{aligned}
		\end{equation*}
	\end{definition}
\noindent The Hausdorff distance between two sets $\Omega_1$ and $\Omega_2$ is defined by
    \begin{equation*}
        d_H(\Omega_1,\Omega_2)=\max\{\sup\limits_{x\in\Omega_1}\inf\limits_{y\in\Omega_2}\ dist(x,y),\sup\limits_{x\in\Omega_2}\inf\limits_{y\in\Omega_1}\ dist(x,y)\}.
    \end{equation*}
\textit{Functional setting:} Let $\Omega$ be a bounded domain. We set
\begin{equation}
BV(\Omega)= \{ v \in L^1(\Omega) \, : \, TV(v) < \infty \},
\label{BV}
\end{equation}
where
\begin{equation}\label{eq:TV}
TV(v) = \sup \left\{ \int_\Omega v \text{div}(\varphi); \quad \varphi \in C^1_0(\Omega), \,  \|{\varphi}\|_{{L^\infty}(\Omega)}\leq 1 \right\}
\end{equation}
is the total variation of $v$. The BV space is endowed with the natural norm $\|{v}\|_{BV(\Omega)} = \|{v}\|_{L^1(\Omega)} + TV(v)$. We recall that the perimeter of $\Omega$ is defined as
\begin{equation}\label{def:perimeter}
\textrm{Per}(\Omega)= TV(\chi_{\Omega}),
\end{equation}
where $\chi_{\Omega}$ is the characteristic function of the set $\Omega$. \\
Setting $H^1_{\partial\Omega}(\Omega):=\{\upsilon\in H^1(\Omega): \upsilon\lfloor_{\partial\Omega}=0\}$,
we recall the following inequalities.
\begin{proposition}\label{lemma1}
Let $\Omega$ be a bounded Lipschitz domain. For every $\upsilon\in H^1_{\partial\Omega}(\Omega)$, there exists a positive constant $\overline{c}=\overline{c}(\Omega)$ such that
\begin{equation}\label{korn}
(\textrm{Korn inequality})\qquad \|\nabla \upsilon \|_{L^2(\Omega)} \leq \overline{c}\ \|\widehat \nabla \upsilon\|_{L^2(\Omega)}.
\end{equation}
\begin{equation}\label{poincare}
(\textrm{Poincar\'e inequality})\qquad\|\upsilon\|_{H^1(\Omega)} \leq \overline{c}\  \|\nabla \upsilon\|_{L^2(\Omega)}.
\end{equation}
\end{proposition}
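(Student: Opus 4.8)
The plan is to prove the two inequalities separately, exploiting in both cases the vanishing boundary trace, which is precisely the feature that makes them hold in the clean stated form (no lower-order corrections in Korn, no exclusion of rigid motions). Since $\widehat\nabla$ acts on vector fields, I read $\upsilon\in H^1_{\partial\Omega}(\Omega)$ as $\mathbb{R}^d$-valued. In both arguments I would first work with $\upsilon\in C^\infty_c(\Omega)$ and then pass to the general case by density of $C^\infty_c(\Omega)$ in $H^1_{\partial\Omega}(\Omega)$, which holds because $\Omega$ is Lipschitz.

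For Korn's inequality (\ref{korn}) the key is an elementary integration-by-parts identity. Expanding $|\widehat\nabla\upsilon|^2=\frac14\sum_{i,j}(\partial_i\upsilon_j+\partial_j\upsilon_i)^2$ and using $\sum_{i,j}(\partial_i\upsilon_j)^2=\sum_{i,j}(\partial_j\upsilon_i)^2=|\nabla\upsilon|^2$ gives the pointwise relation $|\widehat\nabla\upsilon|^2=\frac12|\nabla\upsilon|^2+\frac12\sum_{i,j}\partial_i\upsilon_j\,\partial_j\upsilon_i$. Integrating the cross term over $\Omega$ and integrating by parts twice (the boundary contributions vanish because $\upsilon$ has compact support) turns it into $\int_\Omega(\mathrm{div}\,\upsilon)^2\,dx\ge 0$. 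Hence $\int_\Omega|\nabla\upsilon|^2\,dx\le 2\int_\Omega|\widehat\nabla\upsilon|^2\,dx$ on smooth fields, i.e.\ (\ref{korn}) with constant $\sqrt2$; density then extends this to all of $H^1_{\partial\Omega}(\Omega)$.

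For the Poincar\'e inequality (\ref{poincare}), since $\|\upsilon\|_{H^1}^2=\|\upsilon\|_{L^2}^2+\|\nabla\upsilon\|_{L^2}^2$, it suffices to establish the Poincar\'e--Friedrichs bound $\|\upsilon\|_{L^2}\le C\|\nabla\upsilon\|_{L^2}$ for zero-trace functions and then absorb constants. I would argue by contradiction and compactness: were no such $C$ to exist, there would be a sequence $\upsilon_n\in H^1_{\partial\Omega}(\Omega)$ with $\|\upsilon_n\|_{L^2}=1$ and $\|\nabla\upsilon_n\|_{L^2}\to 0$; this sequence is bounded in $H^1$, so up to a subsequence it converges weakly in $H^1$ and, by the Rellich--Kondrachov compact embedding (valid on bounded Lipschitz domains), strongly in $L^2$ to a limit $\upsilon$ with $\|\upsilon\|_{L^2}=1$. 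The vanishing of $\|\nabla\upsilon_n\|_{L^2}$ forces $\nabla\upsilon=0$, so $\upsilon$ is constant, while continuity of the trace under weak $H^1$ convergence gives $\upsilon\lfloor_{\partial\Omega}=0$; hence $\upsilon\equiv 0$, contradicting $\|\upsilon\|_{L^2}=1$.

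Finally I would take $\overline c=\overline c(\Omega)$ to be the larger of the two constants so that a single $\overline c$ serves in both (\ref{korn}) and (\ref{poincare}). The only non-elementary ingredients are the density of $C^\infty_c(\Omega)$ in $H^1_{\partial\Omega}(\Omega)$ and the Rellich--Kondrachov compactness, both of which rely on the Lipschitz hypothesis; these are the points I would state carefully, though for this regularity class they are entirely standard. I expect no genuine obstacle here, as the zero-trace assumption is exactly what reduces Korn to a one-line computation and rules out the nonzero-constant limit in the Poincar\'e argument.
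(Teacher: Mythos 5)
Your proof is correct. Note, however, that the paper does not actually prove this proposition: it is stated as a recollection of standard facts (the first Korn inequality and the Poincar\'e--Friedrichs inequality for zero-trace fields), so there is no paper proof to compare against. Your argument is the standard self-contained one: the integration-by-parts identity $\int_\Omega|\widehat\nabla\upsilon|^2=\tfrac12\int_\Omega|\nabla\upsilon|^2+\tfrac12\int_\Omega(\mathrm{div}\,\upsilon)^2$ on $C^\infty_c(\Omega)$ gives \eqref{korn} with the explicit constant $\sqrt2$, and the compactness/contradiction argument gives \eqref{poincare}; the two nonelementary inputs you flag (density of $C^\infty_c(\Omega)$ in the zero-trace subspace, and Rellich--Kondrachov) are indeed exactly where the Lipschitz hypothesis enters, and connectedness of the domain is what forces the constant limit in the Poincar\'e step to vanish.

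One caveat worth recording: the sentence immediately following the proposition claims that both estimates persist when $\upsilon$ vanishes only on a portion of $\partial\Omega$ (and this is the form actually used later, on $H^1_{\Sigma_D}$). Your integration-by-parts proof of \eqref{korn} does not survive in that setting, since the boundary terms no longer vanish and there is no density of compactly supported fields; there one must instead invoke the second Korn inequality together with a compactness/contradiction argument ruling out infinitesimal rigid motions with vanishing partial trace. For the proposition as literally stated, your proof is complete.
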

Estimates \eqref{korn} and \eqref{poincare} hold also in the case where $\upsilon$ is zero, in the trace sense, only on a portion of $\partial\Omega$.

\section{Elastic problem - detection of a cavity}\label{sec:elastic problem}
The focus of this work is the reconstruction of a cavity in an elastic body from boundary measurements using a phase-field approach. We assume that $\Omega$ is a bounded domain and that $\partial\Omega:=\Sigma_N\cup\Sigma_D$, with $|\Sigma_N| >0$, $|\Sigma_D|>0$, $\Sigma_D$ closed, where $\partial \Omega$ is of Lipschitz class with constants $r_0$ and $L_0$. Denoting by $C$ the cavity, we consider the mixed boundary value problem
\begin{equation}\label{prob:cavity}
    \begin{cases}
    {\rm div}(\mathbb{C}_0\widehat\nabla u) = 0 &  \text{in } \Omega\setminus \overline{C},	\\
    (\mathbb{C}_0 \widehat\nabla u) n = 0 & \text{on}\ \partial C,\\
	(\mathbb{C}_0 \widehat\nabla u) \nu = g & \text{on } \Sigma_N,\\
		u=0 & \text{on } \Sigma_D,
    \end{cases}
\end{equation}
where $n,\nu$ are the outer unit normal vector to $\partial C$ and $\partial \Omega$, respectively. \\
We make the following assumptions.
\begin{assumption}\label{ass:elasticity_tensor}
$\mathbb{C}_0=\mathbb{C}_0(x)$ is a fourth-order tensor such that  
	\begin{equation*}
	(\mathbb{C}_0)_{ijkh}(x)=(\mathbb{C}_0)_{jikh}(x)=(\mathbb{C}_0)_{khij}(x),\qquad  \forall 1\leq i,j,k,h\leq d,\ \textrm{and}\ \ {x}\in\Omega.
	\end{equation*}
	Moreover, $\mathbb{C}_0$ is assumed to be uniformly bounded and uniformly strongly convex, that is,
	$\mathbb{C}_0$ defines a positive-definite quadratic form on symmetric matrices:
	\begin{equation*}
	\mathbb{C}_0(x)\widehat{A}:\widehat{A}\geq \xi_0 
	|\widehat{A}|^{2}, \qquad \textrm{a.e in}\,\, \Omega,
	\end{equation*}
	for $\xi_0>0$.
\end{assumption}
\begin{remark}
We require that $\mathbb{C}_0$ is defined in $\Omega$, and not only in $\Omega\setminus C$, because we employ, in the second part of the paper, a reconstruction algorithm based on the strategy of filling the cavity with a fictitious elastic material.
\end{remark}
\begin{assumption}\label{ass:neumann_data} 
\begin{equation}
g\in L^2(\Sigma_N).    
\end{equation}
\end{assumption}
We assume Lipschitz regularity of the cavity (see Definition \ref{def:lipschitz_domain}), which is a typical requirement to prove uniqueness of the solution to the inverse problem,  see \cite{MR04}. More precisely, we make the following assumption. 
\begin{assumption}\label{ass:cavity}
Let 
\begin{center}
$C\in \mathcal{C}$:=\{$D \subset \overline{\Omega}:$ \hbox{ compact, simply connected} $\partial D\in C^{0,1}$ with constants $r_0$, $L_0$\, and ${dist}(D,\partial\Omega)\geq d_0>0$\}.
\end{center}
\end{assumption}
We define
\begin{equation}\label{eq:omega_d0}
\Omega^{d_0/2}=\{x\in\Omega\ / \ {dist}(x,\partial\Omega)\leq \frac{d_0}{2}\}. 
\end{equation}
For the class of admissible sets $\mathcal{C}$, the following result holds.
\begin{remark}\label{rem:compactness_sets}
$\mathcal{C}$ is compact with respect to the Hausdorff topology \cite{DalMaso93,MeRon13}.
\end{remark}
\begin{remark}
From now on, we will denote with $c$ any constant possibly depending on $\Omega$, $r_0$, $L_0$, $d$, $\xi_0$, $d_0$, $\overline{c}$, and on the uniform bounds of the elasticity tensor.
\end{remark}
Well-posedness of \eqref{prob:cavity} in $H^1_{\Sigma_D}(\Omega\setminus C)$  follows from an application of the Lax-Milgram theorem to the weak formulation of Problem \eqref{prob:cavity}:
\begin{center}
    \textrm{Find} $u\in H^1_{\Sigma_D}(\Omega\setminus C)$ \textrm{solution to}
    \begin{equation}\label{eq:weak_form_prob_cavity}
        \int_{\Omega\setminus C} \mathbb{C}_0\widehat{\nabla}{u}:\widehat{\nabla}{\varphi}\, dx=\int_{\Sigma_N}g\cdot \varphi\, d\sigma(x), \qquad \forall \varphi\in H^1_{\Sigma_D}(\Omega\setminus C),
    \end{equation}
\end{center}
(see for example \cite{Ciarlet88}).
Moreover, it holds
\begin{equation}\label{eq:H1_estimate}
    \|u\|_{H^1(\Omega\setminus C)}\leq c \|g\|_{L^2(\Sigma_N)}.
\end{equation}
Choosing $\varphi=u$ in \eqref{eq:weak_form_prob_cavity}, the last inequality follows from the strong convexity of the elasticity tensor $\mathbb{C}_0$ (see Assumption \ref{ass:elasticity_tensor}), from an application of the Korn and Poincar\'e inequality to the left-hand side of \eqref{eq:weak_form_prob_cavity} (see Proposition \ref{lemma1}), and from the use of a Cauchy-Schwarz inequality to the right-hand side. In fact,
\begin{equation}\label{eq:coercivity}
\int_{\Omega\setminus C} \mathbb{C}_0\widehat{\nabla}{u}:\widehat{\nabla}{u}\, dx\geq c \|\widehat{\nabla}u\|^2_{L^2(\Omega\setminus C)}\geq c \|\nabla u\|^2_{L^2(\Omega\setminus C)}\geq c \|u\|^2_{H^1(\Omega\setminus C)},
\end{equation}
and
\begin{equation}\label{eq:continuity_functional}
    \Bigg|\int_{\Sigma_N}g\cdot u\, d\sigma(x)\Bigg| \leq \|g\|_{L^2(\Sigma_N)}\|u\|_{L^2(\Sigma_N)}\leq c \|g\|_{L^2(\Sigma_N)}\|u\|_{H^1(\Omega\setminus C)},
\end{equation}
and so estimate \eqref{eq:H1_estimate} follows by \eqref{eq:coercivity} and \eqref{eq:continuity_functional}.
\\ 
Our aim is to tackle the following inverse problem: 
\begin{problem} \label{IP_cavity}
Under Assumptions \ref{ass:elasticity_tensor},  \ref{ass:neumann_data}, and \ref{ass:cavity}, given $\displaystyle u_{meas} \in L^2(\Sigma_N)$, find  $C \in \mathcal{C}$ such that $u\lfloor_{\Sigma_N} = u_{meas}$, where $u\in H^1_{\Sigma_D}(\Omega\setminus C)$ solves \eqref{prob:cavity}.
\end{problem}
It has been proved in \cite{MR04} (see also \cite{ABR18}) that Problem \ref{IP_cavity} has a unique solution when $\partial C$ is of Lipschitz class. Logarithmic stability estimates have been proved under the assumption of $C^{1,\sigma}$ regularity, $0<\sigma\leq 1$, on the cavity $C$, cf. \cite{MR04}.   

For the reconstruction of the solution to the inverse problem we consider a standard approach based on the minimization of a quadratic misfit functional, with a Tikhonov regularization penalizing the perimeter of $C$. More precisely, let
\begin{equation}\label{eq:JC}
\min_{C\in \mathcal{C}}J(C), \hbox{ where } J(C)=\frac12 \int_{\Sigma_N} |u(C)-u_{meas}|^2\, d\sigma(x)+\alpha \textrm{Per}(C),
\end{equation}
where $\alpha>0$ represents a regularization parameter,  $\textrm{Per}(C)$ the perimeter of the set $C$, see \eqref{def:perimeter}, and $u(C)\in H^1_{\Sigma_D}(\Omega\setminus C)$ the solution to \eqref{eq:weak_form_prob_cavity}.

\subsection{Continuity property of solutions with respect to $C$}
Adapting to our case some known results in literature, see for example \cite{ChDo97,BV00,BB05,G04,LRX19} and references therein, in this section we will show the continuity of the boundary term in \eqref{eq:JC} with respect to perturbations of the cavity $C$ in the Hausdorff distance.

To this purpose, we recall the definition of Mosco convergence and some of its properties (see \cite{BHSZ01,BB05,G04,MeRon13}). 
Let $X$ be a reflexive Banach space, and $G_n$ a sequence of closed subspaces of $X$. We define
\begin{equation}
    G':=\{x\in X\ /\  x=w-\limsup y_{n_k}\ ,\ y_{n_k}\in G_{n_k}, \ n_k\to +\infty     \}
\end{equation}
and
\begin{equation}
    G'':=\{x\in X\ /\ x=s-\liminf y_{n}\ ,\ y_{n}\in G_{n}\ \textrm{for}\ n\ \textrm{large}\}.
\end{equation}
$G',G''$ are called the \textit{weak-limsup} and \textit{the strong-liminf} of the sequence $G_n$ in the sense of Mosco.
\begin{definition}
The sequence $G_n$ converges in the sense of Mosco if $G'=G''=G$. $G$ is called the Mosco limit of $G_n$.
\end{definition}
In other words, $G_n$ converges in the sense of Mosco to $G$ when the following two conditions hold:
\begin{align}
&\textrm{If}\ u_{n_k}\in G_{n_k}\ \textrm{is such that}\ u_{n_k}\rightharpoonup u\ \textrm{in}\ X, \textrm{then}\ u\in G;\label{eq:second_condition}\\
&\forall u\in G, \exists u_n\in G_n\ \textrm{such that}\ u_n\to u\ \textrm{in}\ X.\label{eq:first_condition}
\end{align}
Given $\Omega$ and $\Omega\setminus C$, we can identify the Sobolev space $H^1_{\Sigma_D}(\Omega\setminus C)$ with a closed subspace of $L^2(\Omega,\mathbb{R}^{d+d^2})$ through the map
    \begin{equation}\label{eq:identification}
    \begin{aligned}
        H^{1}_{\Sigma_D}(\Omega\setminus C) &\hookrightarrow L^2(\Omega,\mathbb{R}^{d+d^2})\\
       u &\to (u,\partial_{j}u_i),\qquad \forall i,j=1,\cdots,d
    \end{aligned}
    \end{equation}
with the convention of extending $u$ and $\nabla u$ to zero in $C$. The same identification holds for $\Omega\setminus C_n$, extending $u_n$ and $\nabla u_n$ to zero in $C_n$. \\
Since we are considering the case of uniform Lipschitz domains, we have the following result, which is an adaptation of Theorem 7.2.7 in \cite{BB05}. 
\begin{theorem}\label{th:Mosco_conv}
Let us assume that $C_n, C\subset \Omega$ belong to the class $\mathcal{C}$. If $C_n\to C$ in the Hausdorff metric, then $H^1_{\Sigma_D}(\Omega\setminus C_n)$ converges to $H^1_{\Sigma_D}(\Omega\setminus C)$ in the sense of Mosco. 
\end{theorem}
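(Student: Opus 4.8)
The plan is to verify the two Mosco conditions \eqref{eq:second_condition} and \eqref{eq:first_condition} directly, exploiting the uniform Lipschitz regularity encoded in the class $\mathcal{C}$ together with the compactness recorded in Remark~\ref{rem:compactness_sets}. Throughout I identify each space $H^1_{\Sigma_D}(\Omega\setminus C_n)$ with a closed subspace of $L^2(\Omega,\mathbb{R}^{d+d^2})$ via \eqref{eq:identification}, extending functions and their gradients by zero inside the cavity; the key point of this identification is that a weak limit in $L^2(\Omega,\mathbb{R}^{d+d^2})$ of pairs $(u_n,\nabla u_n)$ automatically has its second component equal to the distributional gradient of the first, so limits stay in a Sobolev space and the only real content is to locate \emph{where} the limit vanishes.

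First I would treat the \textbf{weak-limsup inclusion} \eqref{eq:second_condition}. Suppose $u_{n_k}\in H^1_{\Sigma_D}(\Omega\setminus C_{n_k})$ with $(u_{n_k},\nabla u_{n_k})\rightharpoonup (u,U)$ in $L^2(\Omega,\mathbb{R}^{d+d^2})$. A standard argument shows $U=\nabla u$, so $u\in H^1(\Omega)$ and $u=0$ on $\Sigma_D$ in the trace sense (the vanishing trace condition is preserved under weak $H^1$ convergence since $\Sigma_D\subset\partial\Omega$ is fixed). It remains to show $u=0$ a.e.\ on $C$, i.e.\ that the limit lies in $H^1_{\Sigma_D}(\Omega\setminus C)$. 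Here I would use the Hausdorff convergence $C_{n_k}\to C$: fix any compact set $K\Subset \mathrm{int}(C)$; since $C_{n_k}\to C$ in $d_H$, one has $K\subset C_{n_k}$ for $k$ large, hence $u_{n_k}=0$ and $\nabla u_{n_k}=0$ on $K$, and passing to the weak limit gives $u=0$ on $K$. Exhausting $\mathrm{int}(C)$ by such $K$ yields $u=0$ on $\mathrm{int}(C)$, and since $\partial C$ has zero Lebesgue measure (Lipschitz boundary), $u=0$ a.e.\ on $C$. Thus $u\in H^1_{\Sigma_D}(\Omega\setminus C)$.

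The harder direction is the \textbf{strong-liminf / recovery condition} \eqref{eq:first_condition}: given $u\in H^1_{\Sigma_D}(\Omega\setminus C)$ I must construct $u_n\in H^1_{\Sigma_D}(\Omega\setminus C_n)$ with $u_n\to u$ strongly in $L^2(\Omega,\mathbb{R}^{d+d^2})$, i.e.\ strongly in $H^1$ after the zero-extension. This is where the uniform Lipschitz constants $r_0,L_0$ are essential and where the proof of Theorem~7.2.7 in \cite{BB05} is adapted. The strategy is first to approximate $u$ by a smooth function $\tilde u$ (e.g.\ compactly supported away from $\overline{C}$ in its interior, vanishing near $\Sigma_D$) that is close to $u$ in $H^1$; the point of the uniform cone/graph condition is that it guarantees a uniform extension operator and prevents the boundaries $\partial C_n$ from developing cusps or collapsing, so that one can cut $\tilde u$ off near $C_n$ without losing control of the $H^1$ norm. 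Concretely, since $\tilde u$ vanishes in a neighborhood of $\overline C$ and $C_n\to C$ in $d_H$, for $n$ large $\tilde u$ already vanishes on a neighborhood of $C_n$, so $\tilde u\in H^1_{\Sigma_D}(\Omega\setminus C_n)$ needs no modification; letting $n\to\infty$ and then removing the smoothing via a diagonal argument produces the required recovery sequence.

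The main obstacle I anticipate is the recovery step \eqref{eq:first_condition}, specifically justifying that an arbitrary $u\in H^1_{\Sigma_D}(\Omega\setminus C)$ can be approximated in $H^1$ by functions vanishing in a \emph{neighborhood} of $\overline C$ while keeping the trace on $\Sigma_D$ null. This density fact is exactly what the uniform Lipschitz regularity of $\partial C$ buys: it allows a local reflection/extension near $\partial C$ so that $u$ can be extended across $\partial C$ into $C$ with controlled $H^1$ norm, and then a cutoff supported outside $C_n$ applied to this extension yields the approximants. I would lean on the stability of the extension operator under the uniform constants $r_0,L_0$ — this uniformity is the only reason the single cutoff scale works simultaneously for all large $n$ — and on the compactness of $\mathcal{C}$ (Remark~\ref{rem:compactness_sets}) to pass to the limit. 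Once both inclusions $G'\subset G$ and $G\subset G''$ are established and combined with the trivial $G''\subset G'$, we conclude $G'=G''=G=H^1_{\Sigma_D}(\Omega\setminus C)$, which is precisely Mosco convergence.
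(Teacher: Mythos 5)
Your verification of the weak--limsup condition \eqref{eq:second_condition} is essentially sound, but only after you drop the parenthetical claim that the limit lies in $H^1(\Omega)$: the zero--extension of $\nabla u_{n_k}$ is \emph{not} the distributional gradient of the zero--extension of $u_{n_k}$ on all of $\Omega$ (there is a jump part supported on $\partial C_{n_k}$), so the identity $U=\nabla u$ can only be asserted after localizing to open sets $V\Subset\Omega\setminus C$ (where $C_{n_k}\cap\overline V=\emptyset$ for large $k$) and to $\mathrm{int}(C)$ (where everything vanishes). That local information, together with $|\partial C|=0$ and weak continuity of the trace on the fixed collar $\Omega^{d_0/2}$, is exactly what membership of the limit in $H^1_{\Sigma_D}(\Omega\setminus C)$ requires, so the slip is harmless. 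Note also that the step ``$K\Subset\mathrm{int}(C)\Rightarrow K\subset C_{n_k}$ for $k$ large'' uses the uniform Lipschitz regularity of the class $\mathcal{C}$ and not bare Hausdorff convergence: removing a thin slit from $C_{n_k}$ is Hausdorff--small but destroys the inclusion; the uniform constants $r_0,L_0$ rule this out.

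The recovery condition \eqref{eq:first_condition} is where the argument genuinely breaks. You reduce it to the claim that every $u\in H^1_{\Sigma_D}(\Omega\setminus C)$ can be approximated in $H^1(\Omega\setminus C)$ by functions vanishing in a neighborhood of $\overline C$. This density statement is false: the trace onto $\partial C$ is continuous on $H^1(\Omega\setminus C)$ for Lipschitz $C$, so the $H^1$--closure of such functions lies in the proper closed subspace $\{v:\ v|_{\partial C}=0\}$, whereas a generic element of $H^1_{\Sigma_D}(\Omega\setminus C)$ --- e.g.\ the solution of \eqref{prob:cavity} itself --- has nonzero trace on $\partial C$, since the Dirichlet condition is imposed only on $\Sigma_D$. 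The smooth--cutoff repair you sketch cannot fix this: if $\chi_n$ vanishes on an $\epsilon_n$--neighborhood of $C_n$, the commutator term $Eu\,\nabla\chi_n$ has $L^2$ norm of order $\epsilon_n^{-1}\bigl(\int_{\text{layer}}|Eu|^2\bigr)^{1/2}$, which does not tend to zero unless $Eu$ already vanishes on $\partial C$. The construction that actually works (and is the content of Theorem 7.2.7 of \cite{BB05}, to which the paper defers without writing a proof) is the opposite of a cutoff: use the uniform extension operator, whose norm depends only on $r_0,L_0$, to extend $u$ \emph{into} the cavity, obtaining $Eu\in H^1(\Omega)$ with $Eu=u$ on $\Omega\setminus C$ (hence with zero trace on $\Sigma_D$, since $\mathrm{dist}(C,\partial\Omega)\geq d_0$), and set $u_n:=Eu|_{\Omega\setminus C_n}$, i.e.\ multiply the pair $(Eu,\nabla Eu)$ by the characteristic function $\chi_{\Omega\setminus C_n}$. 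Then $u_n\in H^1_{\Sigma_D}(\Omega\setminus C_n)$ and, under the identification \eqref{eq:identification},
\[
\|u_n-u\|^2_{L^2(\Omega,\mathbb{R}^{d+d^2})}=\int_{C_n\triangle C}\bigl(|Eu|^2+|\nabla Eu|^2\bigr)\,dx\longrightarrow 0,
\]
because Hausdorff convergence within the uniformly Lipschitz class $\mathcal{C}$ implies $|C_n\triangle C|\to 0$ and the integral is absolutely continuous. Your instinct that $r_0,L_0$ enter through an extension operator is correct, but the operator is needed to extend across $\partial C$ into $C$, not to produce approximants vanishing near it.
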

We can now prove the following continuity result.
\begin{theorem}\label{th:continuity}
Let $C_n\in\mathcal{C}$ be a sequence of sets converging to $C$ in the Hausdorff metric (cf. Remark \ref{rem:compactness_sets}), and let $u(C_n)=:u_n\in H^1_{\Sigma_D}(\Omega\setminus C_n)$, $u(C)=:u\in H^1_{\Sigma_D}(\Omega\setminus C)$ be solutions of \eqref{eq:weak_form_prob_cavity} in $\Omega\setminus C_n$, $\Omega\setminus C$, respectively. Then
\begin{equation}\label{eq:continuity}
    \lim\limits_{n\to +\infty} \int_{\Sigma_N}|u_n-u|^2\,d\sigma(x)=0.
\end{equation}
\end{theorem}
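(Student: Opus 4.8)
The plan is to exploit the Mosco convergence granted by Theorem \ref{th:Mosco_conv}, which is precisely the tool designed to transfer convergence of variational problems posed on the moving domains $\Omega\setminus C_n$. I would first establish that the sequence $u_n$ is uniformly bounded in $H^1$. Indeed, taking $\varphi = u_n$ in the weak formulation \eqref{eq:weak_form_prob_cavity} on $\Omega\setminus C_n$ and applying the coercivity chain \eqref{eq:coercivity} together with the continuity bound \eqref{eq:continuity_functional}, one obtains $\|u_n\|_{H^1(\Omega\setminus C_n)}\leq c\|g\|_{L^2(\Sigma_N)}$ with a constant $c$ independent of $n$ (this is exactly estimate \eqref{eq:H1_estimate}, uniform over $\mathcal C$ because all the relevant constants depend only on $\Omega$, $r_0$, $L_0$, $\xi_0$, and the bounds on $\mathbb C_0$). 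Using the identification \eqref{eq:identification}, I regard each $u_n$ (extended by zero inside $C_n$) as an element of $L^2(\Omega,\mathbb R^{d+d^2})$, so up to a subsequence $u_n\rightharpoonup w$ weakly in that space for some limit $w$.

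The second step is to identify the weak limit $w$ with the true solution $u$ on $\Omega\setminus C$. By the weak-limsup condition \eqref{eq:second_condition} of Mosco convergence, the limit $w$ belongs to $H^1_{\Sigma_D}(\Omega\setminus C)$. To see that $w=u$ I would pass to the limit in the weak formulation. Fix any test function $\varphi\in H^1_{\Sigma_D}(\Omega\setminus C)$; by the strong-liminf condition \eqref{eq:first_condition} there exist $\varphi_n\in H^1_{\Sigma_D}(\Omega\setminus C_n)$ with $\varphi_n\to\varphi$ strongly. Writing \eqref{eq:weak_form_prob_cavity} for $u_n$ against $\varphi_n$, the left-hand side converges to $\int_{\Omega\setminus C}\mathbb C_0\widehat\nabla w:\widehat\nabla\varphi\,dx$ by weak-strong pairing of the gradients, while the right-hand side $\int_{\Sigma_N}g\cdot\varphi_n\,d\sigma$ converges to $\int_{\Sigma_N}g\cdot\varphi\,d\sigma$ (the boundary data live on $\Sigma_N\subset\partial\Omega$, away from the cavities, so the traces are stable under the strong convergence). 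Hence $w$ solves \eqref{eq:weak_form_prob_cavity} on $\Omega\setminus C$, and by uniqueness $w=u$. Since the limit is the same for every subsequence, the whole sequence converges, $u_n\rightharpoonup u$.

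The final step upgrades this to the desired convergence of the boundary misfit. To get \eqref{eq:continuity} I would argue that $u_n\to u$ strongly in $H^1$, not merely weakly, and then invoke continuity of the trace operator $H^1(\Omega)\to L^2(\Sigma_N)$. Strong convergence follows from an energy argument: testing the equations for $u_n$ and $u$ and using the coercivity \eqref{eq:coercivity}, one controls $\|\widehat\nabla(u_n-u)\|^2_{L^2}$ by $\int\mathbb C_0\widehat\nabla(u_n-u):\widehat\nabla(u_n-u)$, and the cross terms are handled by passing to the limit using the two Mosco conditions again (one recovers $\lim\int_{\Omega}\mathbb C_0\widehat\nabla u_n:\widehat\nabla u_n = \int_{\Sigma_N}g\cdot u\,d\sigma = \int_{\Omega}\mathbb C_0\widehat\nabla u:\widehat\nabla u$, so the energies converge, which together with weak convergence yields strong convergence). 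Korn and Poincar\'e \eqref{korn}--\eqref{poincare} then give $u_n\to u$ in $H^1$, and trace continuity delivers \eqref{eq:continuity}.

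I expect the main obstacle to be the bookkeeping around the zero-extensions and the traces on $\Sigma_N$: one must be careful that the extension-by-zero convention in \eqref{eq:identification} is compatible with the $H^1_{\Sigma_D}$ structure of the limit (so that the recovered $w$ genuinely lies in $H^1_{\Sigma_D}(\Omega\setminus C)$ rather than merely in $L^2$), and that the boundary integral over $\Sigma_N$ is continuous under the weak-to-strong passage. Since the cavities stay uniformly away from $\partial\Omega$ by the constraint $\mathrm{dist}(D,\partial\Omega)\geq d_0>0$ in Assumption \ref{ass:cavity}, the traces on $\Sigma_N$ are in fact controlled in a fixed collar neighbourhood $\Omega^{d_0/2}$ where all the $u_n$ solve the \emph{same} homogeneous system with no cavity, which is what makes the trace convergence clean; making this localization rigorous is the most delicate point.
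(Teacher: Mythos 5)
Your argument is correct and its first two stages (uniform $H^1$ bounds via uniform Korn--Poincar\'e constants, zero-extension into $L^2(\Omega,\mathbb{R}^{d+d^2})$, identification of the weak limit with $u$ through the two Mosco conditions and uniqueness) coincide with the paper's proof. Where you diverge is the final upgrade from weak to strong convergence: the paper localizes to the collar $\Omega^{d_0/2}$ and runs a Caccioppoli-type argument, testing the two weak formulations with $(u_n-u)\chi^2$ for a cutoff $\chi$ supported away from the cavities, absorbing the cross term by Young's inequality and using Rellich in the collar to kill the remainder $\int|u_n-u|^2|\nabla\chi|^2$; you instead prove convergence of the global energies $\int_{\Omega}\mathbb{C}_0\widehat\nabla u_n:\widehat\nabla u_n=\int_{\Sigma_N}g\cdot u_n\to\int_{\Sigma_N}g\cdot u$ and combine it with weak convergence of the symmetrized gradients (a Radon--Riesz argument in the $\mathbb{C}_0$-energy norm) to get $\widehat\nabla u_n\to\widehat\nabla u$ strongly in $L^2(\Omega)$ for the zero-extensions. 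Both routes work; yours is arguably cleaner globally, while the paper's is purely local. Two small points of care in your version: first, the convergence $\int_{\Sigma_N}g\cdot u_n\to\int_{\Sigma_N}g\cdot u$ is not a consequence of ``the two Mosco conditions'' but of the fact that all $u_n$ are uniformly bounded in $H^1$ of the fixed collar $\Omega^{d_0/2}$, so their traces converge (weakly, indeed strongly by compactness of $H^{1/2}(\Sigma_N)\hookrightarrow L^2(\Sigma_N)$) --- this is exactly the localization you flag at the end, and it must be made explicit since it is what breaks the apparent circularity; second, the concluding statement ``Korn and Poincar\'e then give $u_n\to u$ in $H^1$'' should be restricted to the collar, because the zero-extension of $u$ is not an element of $H^1(\Omega\setminus C_n)$ and Korn's inequality cannot be applied to $u_n-u$ on the varying domains --- but convergence in $H^1(\Omega^{d_0/2})$ is all that the trace theorem needs, so this is an imprecision rather than a gap. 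Note also that the paper's remark after the theorem observes that, once $u_n\rightharpoonup u$ is established, weak continuity of the trace into $H^{1/2}(\Sigma_N)$ plus the compact embedding into $L^2(\Sigma_N)$ already yields \eqref{eq:continuity}, so your entire energy-convergence step (and the paper's cutoff argument) can be bypassed.
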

\begin{proof}
Thanks to the uniform Lipschitz regularity of $\partial(\Omega\setminus C_n)$ (and $\partial(\Omega\setminus C)$), we have that the Korn and Poincar\'e inequalities are uniform with respect to $n$ in $H^1_{\Sigma_D}(\Omega\setminus C_n)$, since they depend only on the Lipschitz constants of the domain $\partial(\Omega\setminus C_n)$, see \cite{AMR2008,Ch75}. 
Therefore, from \eqref{eq:weak_form_prob_cavity} and \eqref{eq:H1_estimate}, we have that 
\begin{equation}\label{eq:uniform_estimate_H1}
    \|u_n\|_{H^1(\Omega\setminus C_n)}\leq c,
\end{equation}
where $c$ is independent of $n$. \\
Hence, from the identification \eqref{eq:identification}, we get that $\|u_n\|_{L^2(\Omega,\mathbb{R}^{d+d^2})}$ is uniformly bounded.
Up to subsequences, there exists $u^*\in L^2(\Omega,\mathbb{R}^{d+d^2})$ such that
\begin{equation*}
    u_n \rightharpoonup u^*\ \textrm{in}\ L^2(\Omega,\mathbb{R}^{d+d^2}). 
\end{equation*}
Thanks to Theorem \ref{th:Mosco_conv} and from the first condition of the Mosco convergence applied to $G_n=H^1_{\Sigma_D}(\Omega\setminus C_n)$, $G=H^1_{\Sigma_D}(\Omega\setminus C)$, and $X=L^2(\Omega,\mathbb{R}^{d+d^2})$, see \eqref{eq:second_condition}, we have that $u^*\in H^1_{\Sigma_D}(\Omega\setminus C)$.\\
Moreover, taking $\varphi\in H^1_{\Sigma_D}(\Omega\setminus C)$, there exists $\varphi_n\in H^1_{\Sigma_D}(\Omega\setminus C_n)$ by \eqref{eq:first_condition} such that 
\begin{equation}\label{eq:strong_convergence_phi}
\varphi_n\to \varphi\ \textrm{in}\ L^2(\Omega,\mathbb{R}^{d+d^2}). 
\end{equation}
Considering the weak formulation for $u_n$ (see \eqref{eq:weak_form_prob_cavity} specialized to the case with $C=C_n$ and $\varphi=\varphi_n$)
\begin{equation}\label{eq:weak_form_Mosco}
\int_{\Omega\setminus C_n} \mathbb{C}_0\widehat{\nabla}u_n:\widehat{\nabla}\varphi_n\, dx=\int_{\Sigma_N}g\cdot \varphi_n\, d\sigma(x),
\end{equation}
and since $\varphi_n\in H^1_{\Sigma_D}(\Omega\setminus C_n)$ and $\varphi\in H^1_{\Sigma_D}(\Omega\setminus C)$, it holds
\begin{equation*}
\int_{\Sigma_N}g\cdot \varphi_n\, d\sigma(x)=\int_{\Sigma_N}g\cdot(\varphi_n-\varphi)\, d\sigma(x)+\int_{\Sigma_N}g\cdot\varphi\, d\sigma(x).
\end{equation*}
Hence, thanks to Assumption \ref{ass:cavity} and \eqref{eq:strong_convergence_phi}, we have
\begin{equation*}
\begin{aligned}
 \Bigg|\int_{\Sigma_N}g\cdot(\varphi_n-\varphi)\, d\sigma(x)\Bigg|&\leq c \|g\|_{L^2(\Sigma_N)} \|\varphi_n-\varphi\|_{L^2(\Sigma_N)}\\
&\leq c \|\varphi_n-\varphi\|_{H^1_{\Sigma_D}(\Omega^{d_0/2})} \to 0,  
\end{aligned}
\end{equation*}
as $n\to +\infty$, where $\Omega^{d_0/2}$ is defined as in \eqref{eq:omega_d0}. Therefore,
\begin{equation}
    \int_{\Sigma_N}g\cdot \varphi_n\, d\sigma(x) \to \int_{\Sigma_N}g\cdot \varphi\, d\sigma(x),\qquad \textrm{as}\ n\to+\infty.
\end{equation}
The term on the left-hand side of \eqref{eq:weak_form_Mosco} is equal to
\begin{equation}\label{eq:conv}
\int_{\Omega\setminus C_n} \mathbb{C}_0\widehat{\nabla}u_n:\widehat{\nabla}\varphi_n\, dx=\int_{\Omega\setminus C_n} \mathbb{C}_0\widehat{\nabla}u_n:\widehat{\nabla}(\varphi_n-\varphi)\, dx+\int_{\Omega\setminus C_n} \mathbb{C}_0\widehat{\nabla}u_n:\widehat{\nabla}\varphi\, dx.
\end{equation}
Then, by \eqref{eq:uniform_estimate_H1} and \eqref{eq:strong_convergence_phi}, it follows 
\begin{equation}\label{eq:mosco2}
\Bigg|\int_{\Omega\setminus C_n} \mathbb{C}_0\widehat{\nabla}u_n:\widehat{\nabla}(\varphi_n-\varphi)\, dx\Bigg|\leq c \|\widehat{\nabla}u_n\|_{L^2(\Omega\setminus C_n)} \|\widehat{\nabla}(\varphi_n-\varphi)\|_{L^2(\Omega\setminus C_n)} \to 0,  
\end{equation}
as $n\to +\infty$. Analogously, for the second integral on the right-hand side of \eqref{eq:conv}, using the symmetries of the elasticity tensor, we get
\begin{equation}\label{eq:mosco3}
\begin{aligned}
    \int_{\Omega\setminus C_n} \mathbb{C}_0\widehat{\nabla}u_n:\widehat{\nabla}\varphi\, dx&=  \int_{\Omega\setminus C_n} \widehat{\nabla}u_n:\mathbb{C}_0\widehat{\nabla}\varphi\, dx\\
    &\to \int_{\Omega\setminus C} \widehat{\nabla}u^*:\mathbb{C}_0\widehat{\nabla}\varphi\, dx= \int_{\Omega\setminus C} \mathbb{C}_0\widehat{\nabla}u^*:\widehat{\nabla}\varphi\, dx,
\end{aligned}    
\end{equation}
as $n\to +\infty$. Consequently, using \eqref{eq:mosco2} and \eqref{eq:mosco3} in \eqref{eq:conv}, we get   
\begin{equation}
    \int_{\Omega\setminus C_n} \mathbb{C}_0\widehat{\nabla}u_n:\widehat{\nabla}\varphi_n\, dx \to \int_{\Omega\setminus C} \mathbb{C}_0\widehat{\nabla}u^*:\widehat{\nabla}\varphi\, dx, \qquad \textrm{as}\ n\to+\infty.
\end{equation}
Therefore, we find that 
\begin{equation*}
\begin{aligned}
    \int_{\Omega\setminus C}\mathbb{C}_0\widehat{\nabla}u^*:\widehat{\nabla}\varphi\, dx&=\int_{\Sigma_N}g\cdot \varphi\, d\sigma(x)\\
    &=\int_{\Omega\setminus C}\mathbb{C}_0\widehat{\nabla}u:\widehat{\nabla}\varphi\, dx, \qquad \forall\varphi\in H^1_{\Sigma_D}(\Omega\setminus C),
\end{aligned}    
\end{equation*}
where the last equality comes from the weak formulation \eqref{eq:weak_form_prob_cavity}. Therefore,
\begin{equation*}
    \int_{\Omega\setminus C}\mathbb{C}_0\widehat{\nabla}(u^*-u):\widehat{\nabla}\varphi\, dx=0, \qquad \forall\varphi\in H^1_{\Sigma_D}(\Omega\setminus C),
\end{equation*}
so that $u^*=u$. This conclusion comes from the choice $\varphi=u^*-u$, and the use of Assumption \ref{ass:elasticity_tensor} and Korn and Poincar\`e inequalities (see Proposition \ref{lemma1}).

Next, we prove that $u_n\to u$ in $L^2(\Sigma_N)$ by showing strong convergence of $u_n$ to $u$ in $H^1$-norm in a neighborhood of the boundary of $\Omega$. Consider the weak formulations
\begin{equation}\label{eq:weak_form_un}
    \int_{\Omega\setminus C_n}\mathbb{C}_0 \widehat{\nabla}u_n : \widehat{\nabla}\varphi_1\, dx= \int_{\Sigma_N}g\cdot \varphi_1\, d\sigma(x),\qquad  \forall \varphi_1\in H^1(\Omega\setminus C_n),
\end{equation}
\begin{equation}\label{eq:weak_form_u}
    \int_{\Omega\setminus C}\mathbb{C}_0 \widehat{\nabla}u : \widehat{\nabla}\varphi_2\, dx= \int_{\Sigma_N}g\cdot \varphi_2\, d\sigma(x),\qquad  \forall \varphi_2\in H^1(\Omega\setminus C).
\end{equation}
Now, we define $\Phi=(u_n-u)\chi^2$, where $\chi$ is a smooth cut-off function, $\chi\in[0,1]$ in $\Omega$, such that
\begin{equation*}
\chi=
    \begin{cases}
    1 & \textrm{in}\ \overline{\Omega}^{d_0/4}\\
    0 & \textrm{in}\ \Omega\setminus \Omega^{d_0/2}.
    \end{cases}
\end{equation*}
Then, we choose $\varphi_1=\varphi_2=\Phi$ in \eqref{eq:weak_form_un} and \eqref{eq:weak_form_u}, that is
\begin{equation*}
    \int_{\Omega^{d_0/2}}\mathbb{C}_0 \widehat{\nabla}u_n : \widehat{\nabla}\left((u_n-u)\chi^2\right)\, dx= \int_{\Sigma_N}g\cdot (u_n-u)\, d\sigma(x),
\end{equation*}
\begin{equation*}
    \int_{\Omega^{d_0/2}}\mathbb{C}_0 \widehat{\nabla}u : \widehat{\nabla}\left((u_n-u)\chi^2\right)\, dx= \int_{\Sigma_N}g\cdot (u_n-u)\, d\sigma(x).
\end{equation*}
Subtracting the last two equations, we find
\begin{equation*}
\int_{\Omega^{d_0/2}}\mathbb{C}_0 \widehat{\nabla}(u_n-u) : \widehat{\nabla}\left((u_n-u)\chi^2\right)\, dx=0,
\end{equation*}
that is,
\begin{equation}\label{eq:diff_u_n-u}
\begin{aligned}
&\int_{\Omega^{d_0/2}}\chi^2\mathbb{C}_0 \widehat{\nabla}(u_n-u) : \widehat{\nabla}(u_n-u)\, dx\\
&\hspace{4cm}+
\int_{\Omega^{d_0/2}}2\chi \mathbb{C}_0 \widehat{\nabla}(u_n-u) : \reallywidehat{\left((u_n-u)\otimes \nabla \chi\right)}\, dx=0.
\end{aligned}
\end{equation}
On the second integral, we apply the Young's inequality with a suitable parameter $\kappa>0$, that is
\begin{equation*}
\begin{aligned}
    &\int_{\Omega^{d_0/2}}2\chi \mathbb{C}_0 \widehat{\nabla}(u_n-u) : \reallywidehat{\left((u_n-u)\otimes \nabla \chi\right)}\, dx\\
    &\hspace{4cm}\leq 4\kappa\int_{\Omega^{d_0/2}}\chi^2 \mathbb{C}_0\widehat{\nabla}{(u_n-u)}:\widehat{\nabla}{(u_n-u)}\\
    &\hspace{4cm}+\frac{1}{\kappa}\int_{\Omega^{d_0/2}}\mathbb{C}_0\reallywidehat{\left((u_n-u)\otimes \nabla \chi\right)}:\reallywidehat{\left((u_n-u)\otimes \nabla \chi\right)}.
\end{aligned}    
\end{equation*}
Hence, using this last inequality in \eqref{eq:diff_u_n-u}, we get
\begin{equation*}
\begin{aligned}
    &(1-4\kappa) \int_{\Omega^{d_0/2}}\chi^2 \mathbb{C}_0 \widehat{\nabla}(u_n-u) : \widehat{\nabla}(u_n-u)\, dx\\
    &\hspace{4cm}\leq \frac{1}{\kappa} \int_{\Omega^{d_0/2}}\mathbb{C}_0\reallywidehat{\left((u_n-u)\otimes \nabla \chi\right)}:\reallywidehat{\left((u_n-u)\otimes \nabla \chi\right)}.
\end{aligned}    
\end{equation*}
The right-hand side integral goes to zero, noticing that 
\begin{equation}\label{eq:estimate1}
    \begin{aligned}
    &\int_{\Omega^{d_0/2}}\mathbb{C}_0\reallywidehat{\left((u_n-u)\otimes \nabla \chi\right)}:\reallywidehat{\left((u_n-u)\otimes \nabla \chi\right)}\\
    &\leq c \int_{\Omega^{d_0/2}} \mathbb{C}_0 |u_n-u|^2 |\nabla \chi|^2\, dx\leq c \int_{\Omega^{d_0/2}} |u_n-u|^2\, dx \longrightarrow 0,\quad \textrm{as}\, n\to+\infty.
    \end{aligned}
\end{equation}
The left-hand side can be estimated using the fact that 
\begin{equation*}
    \int_{\Omega^{d_0/2}}\chi^2 \mathbb{C}_0 \widehat{\nabla}(u_n-u) : \widehat{\nabla}(u_n-u)\, dx\geq \int_{\Omega^{d_0/4}}\mathbb{C}_0 \widehat{\nabla}(u_n-u) : \widehat{\nabla}(u_n-u)\, dx
\end{equation*}
and, then, by means of the Korn inequality 
\begin{equation}\label{eq:estimate2}
    \int_{\Omega^{d_0/4}}\mathbb{C}_0 \widehat{\nabla}(u_n-u) : \widehat{\nabla}(u_n-u)\, dx\geq c \|\nabla(u_n-u)\|^2_{L^2(\Omega^{d_0/4})}.
\end{equation}
From \eqref{eq:estimate2} and \eqref{eq:estimate1}, and recalling that $u_n$ is converging strongly in $L^2$-norm to $u$ from the previous results, we find that
\begin{equation}
    \|u_n-u\|_{H^1(\Omega^{d_0/4})}\to 0,\quad \textrm{as}\ n\to +\infty.
\end{equation}
Finally, by the continuity of the trace theorem the proof is concluded.
\end{proof}
\begin{remark}
In the previous result, $u_n\to u$ in $L^2(\Sigma_N)$ can be also proved using the following arguments: note that the trace operator is a linear continuous operator from $H^1_{\Sigma_D}(\Omega\setminus C_n)$ to $H^{\frac{1}{2}}(\Sigma_N)$ (and, analogously, from $H^1_{\Sigma_D}(\Omega\setminus C)$ to $H^{\frac{1}{2}}(\Sigma_N)$), hence is also continuous in the weak topology, see \cite{Brezis83}. Moreover, since $H^{\frac{1}{2}}(\Sigma_N) \hookrightarrow L^2(\Sigma_N)$ is compact, we find that $u_n\to u$ in $L^2(\Sigma_N)$.
\end{remark}
As a consequence of the continuity of the boundary functional, some properties of the functional $J(C)$ defined in \eqref{eq:JC} follow.

\begin{proposition}
 For every $\alpha >0$ there exists at least one solution of the minimization problem \eqref{eq:JC}.
\end{proposition}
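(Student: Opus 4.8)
The plan is to apply the direct method of the calculus of variations, using the compactness of the admissible class together with the continuity result of Theorem \ref{th:continuity}. Since both terms defining $J$ in \eqref{eq:JC} are nonnegative, the infimum $m:=\inf_{C\in\mathcal{C}}J(C)$ is a finite nonnegative number. First I would select a minimizing sequence $\{C_n\}\subset\mathcal{C}$ with $J(C_n)\to m$ as $n\to+\infty$. Because $\mathcal{C}$ is compact in the Hausdorff topology (Remark \ref{rem:compactness_sets}), I can extract a subsequence, not relabeled, and a limit set $C^*\in\mathcal{C}$ such that $C_n\to C^*$ in the Hausdorff metric.

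Next I would pass to the limit in the two contributions to $J$ separately. For the boundary misfit term, Theorem \ref{th:continuity} applies verbatim to the subsequence: denoting $u_n:=u(C_n)$ and $u^*:=u(C^*)$ the corresponding solutions of \eqref{eq:weak_form_prob_cavity}, one has $u_n\to u^*$ in $L^2(\Sigma_N)$, whence, by expanding the square and using Cauchy--Schwarz,
\begin{equation*}
\int_{\Sigma_N}|u_n-u_{meas}|^2\,d\sigma(x)\longrightarrow \int_{\Sigma_N}|u^*-u_{meas}|^2\,d\sigma(x),\qquad\text{as }n\to+\infty.
\end{equation*}
Thus the fidelity term is continuous along the minimizing subsequence, in particular it converges.

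For the regularization term I would prove the lower semicontinuity inequality $\textrm{Per}(C^*)\leq\liminf_{n}\textrm{Per}(C_n)$. The route is to upgrade the Hausdorff convergence $C_n\to C^*$ to the $L^1$ convergence of characteristic functions $\chi_{C_n}\to\chi_{C^*}$ in $L^1(\Omega)$, exploiting the uniform Lipschitz regularity (constants $r_0,L_0$) and the uniform interior-distance condition $\mathrm{dist}(C_n,\partial\Omega)\geq d_0$ built into the class $\mathcal{C}$ in Assumption \ref{ass:cavity}. Once this $L^1$ convergence is available, the classical $L^1$-lower semicontinuity of the total variation, combined with $\textrm{Per}(C)=TV(\chi_C)$ from \eqref{def:perimeter}, yields the desired inequality. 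Combining this with the continuity of the misfit term gives
\begin{equation*}
J(C^*)\leq\liminf_{n\to+\infty}J(C_n)=m,
\end{equation*}
and since $C^*\in\mathcal{C}$, the set $C^*$ is a minimizer, proving existence.

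The main obstacle is precisely the passage from Hausdorff convergence to $L^1$ convergence of characteristic functions (equivalently, the lower semicontinuity of the perimeter along the sequence). For arbitrary closed sets this implication is false, since Hausdorff convergence controls neither the Lebesgue measure nor the boundary measure and allows both loss of mass and wild boundary oscillations in the limit. It is exactly the uniform Lipschitz bounds and the uniform separation from $\partial\Omega$ defining $\mathcal{C}$ that rule out such pathologies and guarantee the compactness and semicontinuity simultaneously; I would invoke the corresponding results from the sets-of-finite-perimeter and shape-optimization literature already cited in the paper (see \cite{BB05,DalMaso93,MeRon13}) to justify this step rigorously.
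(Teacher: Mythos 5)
Your proof is correct and follows the same direct-method skeleton as the paper: minimizing sequence, Hausdorff compactness of $\mathcal{C}$, continuity of the misfit term via Theorem \ref{th:continuity}, and $L^1$-lower semicontinuity of the perimeter. The one step where you genuinely diverge is how the $L^1(\Omega)$ convergence $\chi_{C_n}\to\chi_{C^*}$ is obtained. The paper extracts it first from the bound $\mathrm{Per}(C_n)\leq M$ (forced by the minimizing sequence) via the BV compactness theorem (Theorem 3.39 in \cite{AFP2000}), and only afterwards passes to a further subsequence converging in the Hausdorff metric, tacitly identifying the two limit sets. You instead take the Hausdorff limit first and upgrade it to $L^1$ convergence of characteristic functions using the uniform Lipschitz constants $r_0,L_0$ of the class $\mathcal{C}$ (equivalently, a uniform cone condition, under which Hausdorff convergence and convergence of characteristic functions are known to be equivalent). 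Both routes are legitimate; yours has the small advantage of producing a single limit set from the start, avoiding the identification of the BV limit with the Hausdorff limit that the paper glosses over, while the paper's route is more self-contained in that it leans only on the perimeter bound and standard BV compactness rather than on the finer shape-optimization equivalence you would need to cite. You correctly flag that Hausdorff convergence alone does not imply $L^1$ convergence for arbitrary closed sets, and that the uniform regularity in Assumption \ref{ass:cavity} is exactly what rescues the implication; provided you supply the precise reference for that equivalence, your argument is complete.
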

\begin{proof}
Let $\{C_n\}_{n\geq 0}\in\mathcal{C}$ be a minimizing sequence. Then there exists a positive constant $M$ such that
\begin{equation}
    J(C_n)\leq M,\qquad \forall n,
\end{equation}
hence
\begin{equation*}
\textrm{Per}(C_n) \leq M,\qquad \forall n.    
\end{equation*}
By compactness (see Thereom 3.39 in \cite{AFP2000}), there exists a set of finite perimeter $C_0$ such that, possibly up to a subsequence,
\begin{equation*}
 |C_n\triangle C_0|\rightarrow 0,\qquad n\rightarrow \infty,
\end{equation*}
where $C_n\triangle C_0$ is the symmetric difference of the two sets.
Moreover, thanks to the compactness and equiboundedness of the sets $C_n$ and the fact that $C_n\in\mathcal{C}$, there exists a further subsequence which converges in the Hausdorff metric to $C_0\in\mathcal{C}$, thanks to \cite[Theorem 2.4.10]{HenrPierr18}. Moreover, by the lower semicontinuity of the perimeter functional (see Section 5.2.1, Theorem 1, in \cite{EG15})  it follows that
\begin{equation*}
\textrm{Per}(C_0)\leq \liminf_{n\rightarrow\infty}\textrm{Per}(C_n).
\end{equation*}
Using the continuity of the boundary functional, see \eqref{eq:continuity}, we also have
\begin{equation*}
  \int_{\Sigma_N}(u(C_n) - u_{meas})^2\,d\sigma(x)\rightarrow  \int_{\Sigma_N}(u(C_0) - u_{meas})^2\,d\sigma(x),\qquad\textrm{as }\ n\rightarrow\infty.
\end{equation*}
In conclusion, we find that 
\begin{equation*}
J(C_0)\leq \liminf_{n\rightarrow\infty}J(C_n)=\lim_{n\rightarrow\infty}J(C_n)=\inf_{C\in \mathcal{C}}J(C),
\end{equation*}
and the claim follows.
\end{proof}
We also prove stability with respect to the measured data.
\begin{proposition}\label{prop:stability}
Solutions of \eqref{eq:JC} are stable with respect to perturbations of the data $u_{meas}$, i.e., if $u_n\rightarrow u_{meas}$ in $L^2(\Sigma_N)$ as $n\rightarrow \infty$ then the solutions $C_n$ of \eqref{eq:JC} with datum $u_n$ are such that, up to subsequences,
\begin{equation*}
d_H(C_n,\widetilde{C})\rightarrow 0,\,\, \textrm{ as }\ n\rightarrow \infty,
\end{equation*}
where $\widetilde{C}\in \mathcal{C}$  is a solution of \eqref{eq:JC}, with datum $u_{meas}$.
\end{proposition}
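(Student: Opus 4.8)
The plan is to run the direct method exactly as in the preceding existence proof, but now carrying along the dependence of the misfit term on the varying datum. For each $n$ set $J_n(C):=\tfrac12\int_{\Sigma_N}|u(C)-u_n|^2\,d\sigma(x)+\alpha\,\textrm{Per}(C)$, so that by hypothesis $C_n$ minimizes $J_n$ over $\mathcal{C}$, while the limit functional is $J$ from \eqref{eq:JC}, built with the datum $u_{meas}$.

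First I would extract a convergent subsequence. Fixing an arbitrary competitor $C^*\in\mathcal{C}$, minimality gives $\alpha\,\textrm{Per}(C_n)\le J_n(C_n)\le J_n(C^*)=\tfrac12\int_{\Sigma_N}|u(C^*)-u_n|^2\,d\sigma(x)+\alpha\,\textrm{Per}(C^*)$. Since $u_n\to u_{meas}$ in $L^2(\Sigma_N)$ the norms $\|u_n\|_{L^2(\Sigma_N)}$ are bounded, so the right-hand side is bounded by a constant $M$ independent of $n$, whence $\textrm{Per}(C_n)\le M/\alpha$. Invoking the Hausdorff compactness of $\mathcal{C}$ (Remark \ref{rem:compactness_sets}), I pass to a subsequence (not relabeled) with $C_n\to\widetilde C$ in the Hausdorff metric and $\widetilde C\in\mathcal{C}$.

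Then I would verify that $\widetilde C$ minimizes $J$. Along the subsequence Theorem \ref{th:continuity} yields $u(C_n)\to u(\widetilde C)$ in $L^2(\Sigma_N)$; together with $u_n\to u_{meas}$ this gives $u(C_n)-u_n\to u(\widetilde C)-u_{meas}$ in $L^2(\Sigma_N)$, so the misfit term converges, and, using the lower semicontinuity of the perimeter (as in the previous proposition, cf. \cite{EG15}), $J(\widetilde C)\le\liminf_n J_n(C_n)$. For the reverse comparison, fix any $D\in\mathcal{C}$: since $u_n\to u_{meas}$ and $u(D)$ is fixed, $J_n(D)\to J(D)$, while minimality gives $J_n(C_n)\le J_n(D)$ for every $n$. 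Taking the liminf and combining, $J(\widetilde C)\le\liminf_n J_n(C_n)\le\lim_n J_n(D)=J(D)$ for all $D\in\mathcal{C}$, so $\widetilde C$ solves \eqref{eq:JC} with datum $u_{meas}$, which is the claim.

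The delicate point is that the datum moves together with the geometry, so the two halves of the minimality inequality must be handled by convergences of opposite type: on the side of $C_n$ the misfit has to be shown \emph{genuinely continuous} along the subsequence — which is exactly the content of Theorem \ref{th:continuity} — and is paired with the merely lower semicontinuous perimeter to produce $\liminf_n J_n(C_n)\ge J(\widetilde C)$, whereas on the side of the fixed competitor $D$ one needs full continuity of $J_n(D)$ in the datum. Everything else (the uniform perimeter bound and the compactness extraction) is routine once a fixed admissible competitor $C^*$ is used to bound $J_n(C_n)$ uniformly.
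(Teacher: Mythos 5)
Your proposal is correct and follows essentially the same route as the paper: a uniform perimeter bound from minimality against a fixed competitor, extraction of a Hausdorff-convergent subsequence via the compactness of $\mathcal{C}$, and then the combination of Theorem \ref{th:continuity} (continuity of the misfit along the subsequence) with lower semicontinuity of the perimeter on one side and full convergence of $J_n(D)$ for a fixed competitor $D$ on the other. The paper's write-up is terser but the decomposition and the key ingredients are identical.
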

\begin{proof}
Using \eqref{eq:JC}, we have that, for any $n$, $C_n$ satisfies
\begin{equation*}
\frac{1}{2} \int_{\Sigma_N}(u(C_n) - u_n)^2\,d\sigma(x)+\alpha \textrm{Per}(C_n)\leq \frac{1}{2} \int_{\Sigma_N}(u(C) - u_n)^2d\sigma(x)+\alpha \textrm{Per}(C),
\end{equation*}
for all $C\in\mathcal{C}$.
Therefore, $\textrm{Per}(C_n)\leq M$ and hence, possibly up to subsequences, 
\begin{equation*}
d_H(C_n,\widetilde{C})\rightarrow 0,\qquad n\rightarrow \infty,
\end{equation*}
for some $\widetilde{C}\in\mathcal{C}$, and
\begin{equation*}
\textrm{Per}(\widetilde{C})\leq \liminf_{n\rightarrow\infty}\textrm{Per}(C_n).
\end{equation*}
Moreover, by the continuity of the solution of \eqref{eq:weak_form_prob_cavity} with respect to $C$, see Theorem \ref{th:continuity}, we get
\begin{equation*}
\begin{aligned}
J(\widetilde{C})&\leq \liminf_{n\rightarrow\infty}\frac{1}{2} \int_{\Sigma_N}(u(C_n) - u_n)^2d\sigma(x)+\alpha \textrm{Per}(C_n)\\
&\leq  \lim_{n\rightarrow\infty}\frac{1}{2} \int_{\Sigma_N}(u(C) - u_n)^2d\sigma(x)+\alpha \textrm{Per}(C)\\
&=\frac{1}{2} \int_{\Sigma_N}(u(C) - u_{meas})^2d\sigma(x)+\alpha \textrm{Per}(C),
\end{aligned}
\end{equation*}
for all $C\in \mathcal{C}$. Summarizing, $\widetilde{C}\in \mathcal{C}$ and it is a minimizer of the functional, hence the assertion follows. 
\end{proof}
Finally, we can prove that the solution of the minimization problem \eqref{eq:JC} converges to the unique solution of the inverse problem when the regularization parameter tends to zero.
\begin{proposition}
Let us assume that there exists a solution $C^{\sharp}\in \mathcal{C}$ of the inverse problem corresponding to datum $u_{meas}$. Moreover, for any $\eta>0$ let  $(\alpha(\eta))_{\eta>0}$ be  such that  $\alpha(\eta)=o(1)$ and $\frac{\eta^2}{\alpha(\eta)}$ is bounded as $\eta\rightarrow 0$.\\
Furthermore, let $C_{\eta}$ be a solution to the minimization problem \eqref{eq:JC} with $\alpha=\alpha(\eta)$ and datum $u_{\eta}\in L^2(\Sigma_N)$ satisfying $\|u_{meas}-u_{\eta}\|_{L^2(\Sigma_N)}\leq \eta$. Then
\begin{equation*}
C_{\eta}\rightarrow C^{\sharp}
\end{equation*}
in the Hausdorff metric, as $\eta\rightarrow 0$.
\end{proposition}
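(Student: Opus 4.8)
The plan is to run the standard Tikhonov-regularization convergence argument, exploiting the minimality of $C_\eta$, the continuity result of Theorem \ref{th:continuity}, the Hausdorff compactness of $\mathcal{C}$ (Remark \ref{rem:compactness_sets}), and the uniqueness of the solution to the inverse problem (Problem \ref{IP_cavity}, \cite{MR04}). First I would use $C^{\sharp}$ itself as a competitor in the minimization defining $C_\eta$. Since $C_\eta$ minimizes the functional with regularization $\alpha(\eta)$ and datum $u_\eta$, and since $u(C^{\sharp})\lfloor_{\Sigma_N}=u_{meas}$, the minimality inequality together with $\|u_{meas}-u_\eta\|_{L^2(\Sigma_N)}\leq\eta$ yields
\begin{equation*}
\frac12\int_{\Sigma_N}|u(C_\eta)-u_\eta|^2\,d\sigma(x) + \alpha(\eta)\textrm{Per}(C_\eta) \leq \frac12\int_{\Sigma_N}|u_{meas}-u_\eta|^2\,d\sigma(x) + \alpha(\eta)\textrm{Per}(C^{\sharp}) \leq \frac{\eta^2}{2} + \alpha(\eta)\textrm{Per}(C^{\sharp}).
\end{equation*}

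From this single inequality I would extract two pieces of information. Discarding the nonnegative perimeter term on the left and using $\alpha(\eta)=o(1)$ together with $\eta\to 0$ (while $\textrm{Per}(C^{\sharp})$ stays fixed) shows $\int_{\Sigma_N}|u(C_\eta)-u_\eta|^2\,d\sigma(x)\to 0$; combined with $u_\eta\to u_{meas}$ in $L^2(\Sigma_N)$ and the triangle inequality, this gives $u(C_\eta)\to u_{meas}$ in $L^2(\Sigma_N)$. Dividing instead by $\alpha(\eta)$ and using that $\eta^2/\alpha(\eta)$ is bounded shows $\textrm{Per}(C_\eta)$ is uniformly bounded, which is consistent with (though not strictly needed beyond) the compactness of $\mathcal{C}$.

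Next I would pass to the limit geometrically. By the Hausdorff compactness of $\mathcal{C}$, from any sequence $\eta_k\to 0$ I can extract a subsequence (not relabeled) with $C_{\eta_k}\to\widetilde{C}$ in the Hausdorff metric for some $\widetilde{C}\in\mathcal{C}$. Theorem \ref{th:continuity} then gives $u(C_{\eta_k})\to u(\widetilde{C})$ in $L^2(\Sigma_N)$, and comparing with the limit $u_{meas}$ identified above forces $u(\widetilde{C})\lfloor_{\Sigma_N}=u_{meas}$; that is, $\widetilde{C}$ solves the inverse problem of Problem \ref{IP_cavity}.

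The decisive step—and the one I expect to be the real crux—is uniqueness: since Problem \ref{IP_cavity} admits a \emph{unique} solution in $\mathcal{C}$ (\cite{MR04}, using the Lipschitz regularity built into $\mathcal{C}$), necessarily $\widetilde{C}=C^{\sharp}$. Because the limit is the same $C^{\sharp}$ for every Hausdorff-convergent subsequence, a standard subsequence-of-subsequences argument upgrades the convergence to the whole family, giving $C_\eta\to C^{\sharp}$ in the Hausdorff metric as $\eta\to 0$. The only genuine analytic input beyond bookkeeping is this uniqueness theorem; everything else reduces to the continuity of $C\mapsto u(C)\lfloor_{\Sigma_N}$ and the compactness of $\mathcal{C}$ already established.
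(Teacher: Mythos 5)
Your proposal is correct and follows essentially the same route as the paper: compare against the competitor $C^{\sharp}$, deduce that the boundary misfit vanishes and the perimeters stay bounded, extract a Hausdorff-convergent subsequence by compactness of $\mathcal{C}$, identify the limit via Theorem \ref{th:continuity}, and conclude by the uniqueness result of \cite{MR04}. Your explicit subsequence-of-subsequences step to upgrade to convergence of the whole family is a small point of added rigor that the paper leaves implicit.
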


\begin{proof}
From the definition of $C_{\eta}$, it immediately follows that
\begin{equation}\label{eq:eq8}
\begin{aligned}
\frac{1}{2} \int_{\Sigma_N}(u(C_{\eta}) - u_{\eta})^2d\sigma(x)+\alpha\textrm{Per}(C_{\eta})&\leq \frac{1}{2} \int_{\Sigma_N}(u(C^{\sharp}) - u_{\eta})^2d\sigma+\alpha \textrm{Per}(C^{\sharp})\\
&=\frac{1}{2} \int_{\Sigma_N}(u_{meas} - u_{\eta})^2d\sigma+\alpha \textrm{Per}(C^{\sharp})\\
&\leq \eta^2+\alpha \textrm{Per}(C^{\sharp}).
\end{aligned}
\end{equation}
Straightforwardly, we find that
\begin{equation}\label{eq:eq9}
 \textrm{Per}(C_{\eta})\leq \frac{\eta^2}{\alpha}+\textrm{Per}(C^{\sharp})\leq M.
\end{equation}
Hence, up to subsequences, arguing as in Proposition \ref{prop:stability}, we get
\begin{equation*}
d_H(C_{\eta}, C_0)\rightarrow 0,\qquad \textrm{as}\ \eta\rightarrow 0,
\end{equation*}
for some $C_0\in \mathcal{C}$. From \eqref{eq:eq8} and \eqref{eq:eq9}, as $\eta\rightarrow 0$, we find
\begin{equation*}
 \int_{\Sigma_N}(u(C_{\eta}) - u_{\eta})^2d\sigma\rightarrow 0,
\end{equation*}
hence, also
\begin{equation*}
 \int_{\Sigma_N}(u(C_{\eta}) - u_{meas})^2d\sigma(x)\leq \int_{\Sigma_N}(u(C_{\eta}) - u_{\eta})^2d\sigma(x)+\int_{\Sigma_N}(u_{meas}- u_{\eta})^2d\sigma(x)\rightarrow 0.
\end{equation*}
By the continuity result in Theorem \ref{th:continuity} and using the last relation, we find that
\begin{equation*}
 u(C_0)=u_{meas},\qquad\text{on}\ \Sigma_N.
\end{equation*}
Therefore, thanks to the uniqueness result of the inverse problem in Lipschitz domains (cf. \cite{MR04}) we get $C_0=C^{\sharp}$.
\end{proof}
\section{Reconstruction of cavities - filling the void}\label{sec:filling the void}
From the numerical point of view, the minimization of the functional \eqref{eq:JC} is complicated due to its non-differentiability. A typical approach to overcome this issue is to consider a further regularization of the functional, where the perimeter is approximated by a Ginzburg-Landau type functional, see for example \cite{BouCham03}. This approach is well-known in the literature and it has been applied in different contexts, see for example \cite{AS21,ABCHDRR20,BRV2018,BGHFS14,BGHHR16,BouCham03,CRBHRA19,DEV2016,GHHHK15,JZ10,LamYou20}.

First, we note that Problem \eqref{eq:JC} is equivalent to the following formulation
\begin{equation}\label{eq:Jv}
\min_{v\in X_{0,1}}J(v), \hbox{ where } J(v)=\frac{1}{2}\int_{\Sigma_N} |u(v)-u_{meas}|^2\, d\sigma(x)+\alpha TV(v),	
\end{equation}
where 
$X_{0,1}:=\{v\in BV(\Omega)\,:\, v=\chi_{C} \, \hbox{ a.e. in }\Omega, \,C\in {\mathcal C}\}$, $TV(v)$ is defined in \eqref{eq:TV}, and $\chi_C$ is the indicator function of $C$. Note that the space $X_{0,1}$ is endowed with the norm $\|{v}\|_{BV(\Omega)} = \|{v}\|_{L^1(\Omega)} + TV(v)$.
\begin{remark}\label{compactness}
By compactness properties of $BV(\Omega)$ (see, e.g., \cite{AFP2000}, Theorem 3.23), any uniformly bounded sequence in $X_{0,1}$ admits a subsequence  converging in $L^1(\Omega)$ to an element in $X_{0,1}$. In fact, let $v_n$ a sequence uniformly bounded in $X_{0,1}$, there exists, possibly up to a subsequence, $v\in BV(\Omega)$ such that 
\begin{equation*}
    v_n\to v\ \ \textrm{in}\ \ L^1(\Omega) \Rightarrow v_n\to v \ \ \textrm{a.e. in}\ \ \Omega.
\end{equation*}
Since $v_n$ attains values $0$ and $1$ only, it follows that $v\in X_{0,1}$.
\end{remark}
Following the approach proposed in \cite{BouCham03}, we fill the cavity with a fictitious material with elastic properties that are different from the background. Specifically, we take an elasticity tensor $\mathbb{C}_1:=\delta\mathbb{C}_0$, where $\delta>0$ is sufficiently small.
Therefore, the boundary value problem \eqref{prob:cavity} is modified into 
\begin{equation}
	\left\{
	\begin{aligned}
		\textrm{div}(\mathbb{C}_{\delta}(v) \widehat{\nabla} u_{\delta}(v)) &= 0 \qquad \text{in}\ \Omega,	\\
		(\mathbb{C}_{\delta}(v) \widehat\nabla u_{\delta}(v)) \nu &= g \qquad \text{on } \Sigma_N,\\
		u_{\delta}(v)&=0 \qquad \text{on } \Sigma_D,
	\end{aligned}
	\right.
	\label{elasticity}
\end{equation}
where
\begin{equation}\label{eq:elasticity_tensor_inclusion}
\mathbb{C}_{\delta}(v)= \mathbb{C}_0 +  (\mathbb{C}_1 - \mathbb{C}_0)v,\quad \textrm{with}\quad \mathbb{C}_1=\delta\mathbb{C}_0.
\end{equation}
Here $\mathbb{C}_0$ and $\mathbb{C}_1$ are the elasticity tensors in $\Omega \setminus C$ and $C$, respectively.
\begin{remark}
Thanks to Assumption \ref{ass:elasticity_tensor}, the fact that $\delta>0$, and by \eqref{eq:elasticity_tensor_inclusion}, the elasticity tensor $\mathbb{C}_{\delta}(v)$ is strongly convex.
\end{remark}
\begin{remark}
The following analysis can be generalized to the case of a generic fourth-order elasticity tensor $\mathbb{C}_1$ which is strongly convex and uniformly bounded with the further hypothesis that
\begin{equation*}
\mathbb{C}_1\widehat{A}:\widehat{A} \leq \mathbb {C}_0\widehat{A}:\widehat{A} \ \ \text{ or }\ \, \mathbb {C}_0\widehat{A}:\widehat{A} \leq \mathbb{C}_1\widehat{A}:\widehat{A}.
\end{equation*}
\end{remark}
\begin{remark}
When dealing with sequences, we will often use the simplified notation $u_n:= u_{\delta}(v_n), \,  u := u_{\delta}(v), \, \mathbb{C}_n := \mathbb{C}_{\delta}(v_n), \,\mathbb{C} := \mathbb{C}_{\delta}(v)$.
\end{remark}
The elastic problem \eqref{elasticity} has the following weak formulation:
\begin{center}
    \textrm{Find} $u_{\delta}(v)\in H^1_{\Sigma_D}(\Omega)$ \textrm{solution to}
    \begin{equation}\label{eq:weak_formulation_inclusion}
        \int_{\Omega} \mathbb{C}_{\delta}(v)\widehat{\nabla}{u_{\delta}(v)}:\widehat{\nabla}{\varphi}\, dx=\int_{\Sigma_N}g\cdot \varphi\, d\sigma(x), \qquad \forall \varphi\in H^1_{\Sigma_D}(\Omega).
    \end{equation}
\end{center}
Well-posedness of Problem \eqref{elasticity} in $H^1_{\Sigma_D}(\Omega)$ follows in the same way as for Problem \eqref{prob:cavity}, and, in addition 
\begin{equation*}
    \|u_{\delta}(v)\|_{H^1(\Omega)}\leq c \|g\|_{L^2_{\Sigma_N}}.
\end{equation*}
We now approximate Problem \eqref{eq:Jv} with the following one
\begin{equation}\label{eq:Jv_delta}
\min_{v\in X_{0,1}}J_{\delta}(v), \hbox{ where } J_{\delta}(v)=\frac{1}{2}\int_{\Sigma_N} |u_{\delta}(v)-u_{meas}|^2\, d\sigma(x)+\alpha TV(v),	
\end{equation}
where $u_{\delta}(v)\in H^1_{\Sigma_D}(\Omega)$ is the solution of Problem \eqref{elasticity}.\\
We prove the existence of minima of $J_{\delta}(v)$ in $X_{0,1}$, on account of the ideas contained in \cite{BRV2018}. 
The proof is a consequence of the following property.
\begin{proposition}\label{continuity}
Let $\{v_n\} \subset X_{0,1}$ be strongly convergent in $L^1(\Omega)$ to $v \in X_{0,1}$. Then $\{u_{\delta}(v_n)\lfloor_{\Sigma_N}\}$ strongly converges
in $L^2(\Sigma_N)$ to $u_{\delta}(v)\lfloor_{\Sigma_N}$, i.e., the map $F : v \to u_{\delta}(v)\lfloor_{\Sigma_N}$ is continuous from $X_{0,1}$ to $L^2(\Sigma_N)$ in the $L^1$ topology.
\end{proposition}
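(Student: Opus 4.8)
The plan is to exploit the fact that here the domain $\Omega$ is \emph{fixed}, so that the only varying object is the coefficient tensor $\mathbb{C}_n:=\mathbb{C}_\delta(v_n)$; the statement then reduces to the continuous dependence of the solution of \eqref{eq:weak_formulation_inclusion} on its coefficients, and no Mosco machinery is needed. The first—and conceptually important—observation I would make is that, since $v_n$ and $v$ take only the values $0$ and $1$, one has $|v_n-v|^p=|v_n-v|$ for every $p\ge 1$; hence the $L^1$ convergence $v_n\to v$ automatically upgrades to convergence in every $L^p(\Omega)$, $p<\infty$, and (up to a subsequence) to a.e.\ convergence. Consequently $\mathbb{C}_n=\mathbb{C}_0+(\mathbb{C}_1-\mathbb{C}_0)v_n$ converges to $\mathbb{C}:=\mathbb{C}_\delta(v)$ a.e., and since $\mathbb{C}_1-\mathbb{C}_0=(\delta-1)\mathbb{C}_0$ is bounded, $\mathbb{C}_n\widehat\nabla\psi\to\mathbb{C}\widehat\nabla\psi$ \emph{strongly} in $L^2(\Omega)$ for every fixed $\psi\in H^1_{\Sigma_D}(\Omega)$, by dominated convergence (the integrand being dominated by $c|\widehat\nabla\psi|^2\in L^1$).

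Next I would record the uniform bound. Because each $\mathbb{C}_n$ is strongly convex with the same constant $\delta\xi_0>0$ (the worst case being $v_n=1$), the energy estimate behind \eqref{eq:H1_estimate}, combined with the Korn and Poincar\'e inequalities (Proposition \ref{lemma1}), yields $\|u_n\|_{H^1(\Omega)}\le c\|g\|_{L^2(\Sigma_N)}$ with $c$ independent of $n$. Hence, up to a subsequence, $u_n\rightharpoonup u^*$ in $H^1_{\Sigma_D}(\Omega)$. To identify $u^*$ I would pass to the limit in the weak formulation $\int_\Omega \mathbb{C}_n\widehat\nabla u_n:\widehat\nabla\varphi\,dx=\int_{\Sigma_N}g\cdot\varphi\,d\sigma$: rewriting the left-hand side as $\int_\Omega \widehat\nabla u_n:\mathbb{C}_n\widehat\nabla\varphi\,dx$ via the symmetries of $\mathbb{C}_0$ (Assumption \ref{ass:elasticity_tensor}), the factor $\widehat\nabla u_n$ converges weakly in $L^2$ while $\mathbb{C}_n\widehat\nabla\varphi$ converges strongly in $L^2$ by the observation above, so the product passes to the limit and gives $\int_\Omega \mathbb{C}\widehat\nabla u^*:\widehat\nabla\varphi\,dx=\int_{\Sigma_N}g\cdot\varphi\,d\sigma$ for all $\varphi\in H^1_{\Sigma_D}(\Omega)$. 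By Lax--Milgram uniqueness for \eqref{eq:weak_formulation_inclusion} this forces $u^*=u_\delta(v)$, and since the limit does not depend on the chosen subsequence, the whole sequence satisfies $u_n\rightharpoonup u_\delta(v)$ in $H^1_{\Sigma_D}(\Omega)$.

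Finally, to obtain strong convergence of the traces I would argue exactly as in the Remark following Theorem \ref{th:continuity}: the trace operator $H^1_{\Sigma_D}(\Omega)\to H^{1/2}(\Sigma_N)$ is linear and bounded, hence weak-to-weak continuous, and the embedding $H^{1/2}(\Sigma_N)\hookrightarrow L^2(\Sigma_N)$ is compact, so the weak $H^1$-convergence of $u_n$ already yields $u_n\lfloor_{\Sigma_N}\to u_\delta(v)\lfloor_{\Sigma_N}$ strongly in $L^2(\Sigma_N)$. Alternatively, one may test the difference of the two weak formulations with $u_n-u_\delta(v)$ and combine the weak--strong pairings with the uniform coercivity to get strong $H^1(\Omega)$ convergence, from which the trace convergence follows by continuity of the trace. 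The only genuinely delicate point is the passage to the limit in the bilinear term, where \emph{both} $\mathbb{C}_n$ and $\widehat\nabla u_n$ vary: the argument hinges on isolating the weakly convergent factor from the strongly convergent one, which is precisely what the $\{0,1\}$-valued structure of $v_n$ makes possible.
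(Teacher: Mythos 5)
Your proof is correct, but it follows a genuinely different (and somewhat longer) route than the paper's. The paper subtracts the two weak formulations, tests the resulting identity with $\varphi=u_n-u$, and uses uniform coercivity to obtain the quantitative bound $\|u_n-u\|_{H^1(\Omega)}\leq c\,\|(\widehat\nabla u)(v_n-v)\|_{L^2(\Omega)}$; since $|v_n-v|\leq 1$, $v_n\to v$ a.e.\ up to a subsequence, and $\widehat\nabla u\in L^2(\Omega)$, dominated convergence gives \emph{strong} $H^1(\Omega)$ convergence in one stroke, and the trace theorem concludes. You instead take the soft route: uniform bound, weak $H^1$ compactness, identification of the weak limit by passing to the limit in the weak formulation via a weak--strong pairing (isolating $\mathbb{C}_n\widehat\nabla\varphi\to\mathbb{C}\widehat\nabla\varphi$ strongly in $L^2$, which is the same key mechanism as the paper's, namely $\mathbb{C}_n-\mathbb{C}=(\mathbb{C}_1-\mathbb{C}_0)(v_n-v)$ with $|v_n-v|$ bounded and $\to 0$ a.e.), uniqueness of the limit, and finally the compactness of $H^{1/2}(\Sigma_N)\hookrightarrow L^2(\Sigma_N)$. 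Your argument is complete and would generalize to situations where only weak convergence of the states is attainable, but here it proves less in the interior (only weak $H^1$ convergence along the main line) while requiring more steps (subsequence extraction, limit identification, subsequence-independence); the ``alternative'' you sketch at the end is essentially the paper's proof. Your observation that the $\{0,1\}$-valued structure upgrades $L^1$ to all $L^p$ convergence is correct but not needed: boundedness of $v_n-v$ plus a.e.\ convergence already suffices for the dominated-convergence step in both arguments.
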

\begin{proof}
Consider the weak formulation \eqref{eq:weak_formulation_inclusion} associated to $v$ and $v_n$, respectively, 
\begin{equation*}
\int_\Omega\mathbb{C}_{\delta}(v) \widehat\nabla u_{\delta}(v): \widehat\nabla \varphi = \int_{\Sigma_N} g\cdot\varphi, \quad \forall \varphi \in H_{\Sigma_D}^1(\Omega),
\end{equation*}
\begin{equation*}
\int_\Omega\mathbb{C}_{\delta}(v_n) \widehat\nabla u_{\delta}(v_n): \widehat\nabla \varphi = \int_{\Sigma_N} g\cdot\varphi, \quad \forall \varphi \in H_{\Sigma_D}^1(\Omega).
\end{equation*}
Subtracting the two equations and setting $u_n:= u_{\delta}(v_n), \,  u := u_{\delta}(v), \, \mathbb{C}_n := \mathbb{C}_{\delta}(v_n), \,\mathbb{C} := \mathbb{C}_{\delta}(v)$, we get
\begin{equation*}
\int_\Omega\mathbb{C}_n \widehat\nabla (u_n -  u) : \widehat\nabla \varphi + \int_{\Omega}( \mathbb{C}_n - \mathbb{C}) \widehat\nabla u :\widehat\nabla \varphi =0, \quad \forall \varphi \in H_{\Sigma_D}^1(\Omega).
\end{equation*}
Thus, making the choice $\varphi = u_n - u$ and proceeding similarly as in \eqref{eq:H1_estimate} to get $H^1$-estimates, we find
\begin{equation*}
 \|u_n-u\|^2_{H^1(\Omega)}\leq c \|(\mathbb{C}_n - \mathbb{C})\widehat{\nabla} u\|_{L^2(\Omega)}
\|\widehat{\nabla} (u_n - u)\|_{L^2(\Omega)},
\end{equation*}
and then, by $\mathbb{C}_n - \mathbb{C} = (\mathbb{C}_1 - \mathbb{C}_0)(v_n - v)$ and the uniform bound on the elasticity tensor, see Assumption \ref{ass:elasticity_tensor}, we derive
\begin{equation*}
\|u_n-u\|_{H^1(\Omega)} \leq c \|(\widehat{\nabla} u) (v_n - v)\|_{L^2(\Omega)}.
\end{equation*}
Observe now that $v_n - v \to 0$ in $L^1(\Omega)$ as $n\rightarrow +\infty$ so that, possibly up to a subsequence, $v_n -v \to 0$, a.e. in $\Omega$. Moreover, recalling that $v_n$ and $v$ are bounded and $u \in H^1(\Omega)$, we deduce, by dominated convergence theorem, that
\begin{equation*}
 \|u_n -  u\|_{H^1(\Omega)} \to 0,\quad \text{as } n\rightarrow +\infty.
\end{equation*}
Finally, the trace theorem implies
\begin{equation*}
 \|u_n -  u\|_{L^2(\Sigma_N)} \to 0,\quad \text{as } n\rightarrow +\infty.
\end{equation*}
\end{proof}
\begin{proposition}
$J_{\delta}(v)$ admits a minimum $v \in X_{0,1}$.
\end{proposition}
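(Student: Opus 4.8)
The plan is to apply the direct method of the calculus of variations, exactly in the spirit of the earlier existence proof for $J(C)$, but now transporting the fidelity term through the continuity statement of Proposition \ref{continuity}. Since both terms defining $J_{\delta}$ in \eqref{eq:Jv_delta} are nonnegative, the infimum $m:=\inf_{v\in X_{0,1}}J_{\delta}(v)$ is a finite nonnegative number, and I may select a minimizing sequence $\{v_n\}\subset X_{0,1}$, say $v_n=\chi_{C_n}$ with $C_n\in\mathcal{C}$, such that $J_{\delta}(v_n)\to m$.

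First I would extract compactness. For $n$ large, $J_{\delta}(v_n)\le m+1=:M$, whence $\alpha\,TV(v_n)=\alpha\,\textrm{Per}(C_n)\le M$ and $\|v_n\|_{L^1(\Omega)}=|C_n|\le|\Omega|$; thus $\{v_n\}$ is bounded in $BV(\Omega)$. By the $BV$-compactness recalled in Remark \ref{compactness}, up to a subsequence $v_n\to v$ in $L^1(\Omega)$ and a.e.\ in $\Omega$, and since each $v_n$ takes only the values $0$ and $1$ the same holds for $v$, so $v=\chi_{C_0}$ for some measurable set $C_0$.

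The delicate point is that membership in $X_{0,1}$ requires not merely finite perimeter but $C_0\in\mathcal{C}$, i.e.\ the fixed Lipschitz constants $r_0,L_0$, simple connectedness, and $\textrm{dist}(C_0,\partial\Omega)\ge d_0$. This is exactly what bare $BV$-compactness does not furnish, so here I would invoke the Hausdorff compactness of $\mathcal{C}$ (Remark \ref{rem:compactness_sets}): along a further subsequence $C_n\to C_0$ in the Hausdorff metric with $C_0\in\mathcal{C}$, which is compatible with the $L^1$ limit $v=\chi_{C_0}$ and certifies $v\in X_{0,1}$. This is the step I expect to be the main obstacle, since it is where the structure of the admissible class, rather than just the regularizing perimeter term, becomes essential.

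It then remains to pass to the limit in the two terms. For the fidelity term I would apply Proposition \ref{continuity}: from $v_n\to v$ in $L^1(\Omega)$ it yields $u_{\delta}(v_n)\lfloor_{\Sigma_N}\to u_{\delta}(v)\lfloor_{\Sigma_N}$ in $L^2(\Sigma_N)$, hence
\[
\frac12\int_{\Sigma_N}|u_{\delta}(v_n)-u_{meas}|^2\,d\sigma(x)\longrightarrow \frac12\int_{\Sigma_N}|u_{\delta}(v)-u_{meas}|^2\,d\sigma(x).
\]
For the regularization term I would use the standard $L^1$-lower semicontinuity of the total variation, giving $TV(v)\le\liminf_n TV(v_n)$. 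Combining the two facts, $J_{\delta}(v)\le\liminf_n J_{\delta}(v_n)=m$, and since $v\in X_{0,1}$ this forces $J_{\delta}(v)=m$, so $v$ is the desired minimizer.
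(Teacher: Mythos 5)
Your proof is correct and follows essentially the same route as the paper: direct method, $BV$-compactness of the minimizing sequence (Remark \ref{compactness}), lower semicontinuity of $TV$ under $L^1$-convergence, and Proposition \ref{continuity} for the fidelity term. The only difference is that you explicitly invoke the Hausdorff compactness of $\mathcal{C}$ to certify that the limit set belongs to the admissible class, a point the paper delegates (rather tersely) to Remark \ref{compactness}; your added care here is a genuine improvement in rigor but not a different argument.
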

\begin{proof}
Observe that $J_{\delta}(v)$ is bounded from below, by definition. Moreover, $J_{\delta}(v) \neq + \infty$, for $v\in X_{0,1}$.
So, let $\{v_n\} \subset X_{0,1}$ be a minimizing sequence of  $J_{\delta}(v)$, that is
\begin{equation*}
J_{\delta}(v_n) \to \inf_{v \in X_{0,1}}J_{\delta}(v) = M, \quad \text{as } n\rightarrow +\infty.
\end{equation*}
Then
$$0\leq J_{\delta}(v_n) \leq 2M  \text{ \, and  \, } 0\leq \alpha TV(v_n) \leq 2M.$$
Hence, there exists a positive constant $c$, independent on $n$, such that
\begin{equation}
\|v_n\|_{BV(\Omega)} = \|v_n\|_{L^1(\Omega)} + TV(v_n) \leq c.
\end{equation}
This implies that $v_n$ is uniformly bounded in $X_{0,1}$. Therefore, thanks to Remark \ref{compactness}, there exists $v \in X_{0,1}$ such that $v_n\to v$ in $L^1(\Omega)$.
Due to the lower semicontinuity of $TV(v)$ with respect to the $L^1$-convergence, we have
\begin{equation*}
TV(v) \leq \liminf\limits_{n\to+\infty} TV(v_n)
\end{equation*}
and, using Proposition \ref{continuity}, we get
\begin{equation}
\begin{aligned}
J_{\delta}(v) &= \frac12\int_{\Sigma_N}|u_{\delta}(v) - u_{meas}|^2 + \alpha TV(v) \\
&\leq \liminf\limits_{n\to+\infty} \left ( \frac12 \int_{\Sigma_N}|u_{\delta}(v_n) - u_{meas}|^2 + \alpha TV(v_n) \right )\\
&= \lim\limits_{n\to+\infty} J_{\delta}(v_n) = M.
\end{aligned}
\end{equation}
\end{proof}


\subsection{Phase-field relaxation}
Proceeding as in \cite{DEV2016,BRV2018}, we now consider a phase-field relaxation of the optimization problem \eqref{eq:Jv_delta}. More precisely, we define a minimization problem for a differentiable 
cost functional defined on a convex subspace of $H^1(\Omega)$, namely on the set
\begin{equation*}
\mathcal{K} = \{ v \in H^1(\Omega): \ 0 \leq v(x) \leq 1 \ a.e. \ in \ \Omega, \ v(x) = 0 \ a.e. \ in \ \Omega^{d_0/2}\},
\end{equation*}
where $\Omega^{d_0/2}$ has been defined in \eqref{eq:omega_d0},
and, for every $\varepsilon >0$, we replace the total variation term with the following Modica-Mortola functional.
\begin{problem}
Given $u_{meas} \in L^2(\Sigma_N)$, and $\varepsilon,\delta>0$, find
\begin{equation}
\min_{v \in \mathcal{K}} J_{\delta,\varepsilon}(v), \,\,\, J_{\delta,\varepsilon}(v) := \frac12 \|u_{\delta}(v)-u_{meas}\|_{L^2(\Sigma_N)}^2
+ \widetilde{\alpha} \!\int_{\Omega}\Big( \varepsilon|\nabla v|^2 + \frac{1}{\varepsilon}v(1-v)\Big),\label{minrel}
\end{equation}
$u_{\delta}(v)\in H^1_{\Sigma_D}(\Omega)$ being the solution to \eqref{elasticity}, for $v\in \mathcal{K}$, and $\widetilde{\alpha}=\frac{4}{\pi}\alpha$, where $4/\pi=(2\int_0^1 \sqrt{v(1-v)}\, dv)^{-1}$ is a rescaling parameter, see \cite{ALB96}.
\end{problem}
\begin{remark}
We expect $\Gamma$-convergence of the functional $J_{\delta,\varepsilon}$ to $J$, given in \eqref{eq:Jv}. However, this analysis is involved in the elastic context and is still an open issue that needs a specific accurate study.
\end{remark}
The following result holds
\begin{proposition}
For any $\delta,\varepsilon >0$, Problem \eqref{minrel} admits a solution $v=v_{\delta,\varepsilon} \in {\cal K}$.
\end{proposition}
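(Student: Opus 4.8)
The plan is to apply the direct method of the calculus of variations. The functional $J_{\delta,\varepsilon}$ is a sum of three nonnegative terms, hence bounded below by $0$, and it is finite on $\mathcal{K}$ (e.g.\ at $v\equiv 0$), so $M:=\inf_{v\in\mathcal{K}}J_{\delta,\varepsilon}(v)$ is a finite nonnegative number. I would start from a minimizing sequence $\{v_n\}\subset\mathcal{K}$ with $J_{\delta,\varepsilon}(v_n)\to M$, so that $J_{\delta,\varepsilon}(v_n)\le 2M$ for $n$ large. Since each term is nonnegative, the Modica--Mortola gradient contribution alone satisfies $\widetilde\alpha\,\varepsilon\,\|\nabla v_n\|^2_{L^2(\Omega)}\le 2M$; with $\varepsilon,\delta$ fixed this gives a uniform bound on $\|\nabla v_n\|_{L^2(\Omega)}$. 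Combined with the pointwise constraint $0\le v_n\le 1$, which forces $\|v_n\|_{L^2(\Omega)}\le|\Omega|^{1/2}$, I conclude that $\{v_n\}$ is bounded in $H^1(\Omega)$.

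By weak compactness of $H^1(\Omega)$ together with the Rellich--Kondrachov theorem, I extract a subsequence (not relabeled) with $v_n\rightharpoonup v$ in $H^1(\Omega)$, $v_n\to v$ strongly in $L^2(\Omega)$, and $v_n\to v$ a.e.\ in $\Omega$. The limit $v$ lies in $\mathcal{K}$: the bounds $0\le v\le 1$ and the condition $v=0$ a.e.\ in $\Omega^{d_0/2}$ survive the a.e.\ convergence, while $v\in H^1(\Omega)$ is guaranteed by the weak limit; equivalently, $\mathcal{K}$ is convex and strongly closed, hence weakly closed in $H^1(\Omega)$.

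It then remains to pass to the limit in the three terms. The gradient term $v\mapsto \widetilde\alpha\,\varepsilon\int_\Omega|\nabla v|^2$ is convex and continuous on $H^1(\Omega)$, hence weakly lower semicontinuous, giving $\int_\Omega|\nabla v|^2\le\liminf_n\int_\Omega|\nabla v_n|^2$. The potential term is \emph{not} convex, but since $v_n(1-v_n)\to v(1-v)$ a.e.\ and is dominated by $1/4$ on $[0,1]$, dominated convergence yields $\int_\Omega v_n(1-v_n)\to\int_\Omega v(1-v)$. For the fidelity term I would invoke the continuity of the solution map in Proposition~\ref{continuity}: although stated for sequences in $X_{0,1}$, its proof only uses that the $v_n$ are uniformly bounded and converge in $L^1(\Omega)$ (hence a.e.\ along a subsequence), which holds here as well; therefore $u_{\delta}(v_n)\lfloor_{\Sigma_N}\to u_{\delta}(v)\lfloor_{\Sigma_N}$ in $L^2(\Sigma_N)$ and $\tfrac12\|u_{\delta}(v_n)-u_{meas}\|^2_{L^2(\Sigma_N)}\to\tfrac12\|u_{\delta}(v)-u_{meas}\|^2_{L^2(\Sigma_N)}$. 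Summing the three contributions gives $J_{\delta,\varepsilon}(v)\le\liminf_n J_{\delta,\varepsilon}(v_n)=M$, so $v=v_{\delta,\varepsilon}$ is the sought minimizer.

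The only genuine subtlety — and the step I would be most careful about — is the treatment of the fidelity term together with the nonconvex potential: the potential cannot be handled by weak lower semicontinuity and must instead be controlled through strong $L^2$/dominated convergence, while the continuity of $v\mapsto u_{\delta}(v)\lfloor_{\Sigma_N}$ has to be transferred from the binary setting of Proposition~\ref{continuity} to the relaxed convex set $\mathcal{K}$. The remaining coercivity, compactness, and lower semicontinuity arguments follow the standard template of the direct method.
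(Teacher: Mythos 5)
Your proof is correct and follows essentially the same route as the paper: the direct method with coercivity from the gradient term plus the pointwise constraint, weak $H^1$ compactness and a.e.\ convergence, weak lower semicontinuity for the gradient term, dominated convergence for the nonconvex potential, and Proposition~\ref{continuity} for the fidelity term. Your explicit remark that Proposition~\ref{continuity}, though stated on $X_{0,1}$, extends verbatim to sequences in $\mathcal{K}$ is a point the paper uses implicitly without comment, so it is a welcome clarification rather than a deviation.
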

\begin{proof}
Let us fix $\delta,\varepsilon > 0$ and consider a minimizing sequence $\{ v_n \} \subset \mathcal{K}$ for $J_{\delta,\varepsilon}(v)$ 
(we omit the dependence of $v_n$ on $\delta$ and $\varepsilon$). We have
\begin{equation*}
J_{\delta,\varepsilon}(v_n) \to \inf_{v \in \mathcal{K}}J_{\delta,\varepsilon}(v) = M.
\end{equation*}
Hence, by definition of minimizing sequence, $0\leq J_{\delta,\varepsilon}(v_n) \leq 2M$ independently of $n$, which implies that also $\|{\nabla v_n}\|_{L^2(\Omega)}^2$ 
is bounded.
Moreover, recalling that $v_n \in \mathcal{K}$ and $0\leq v_n (x)\leq 1$ a.e. in $\Omega$, we deduce that $\|{v_n}\|_{L^2(\Omega)}\leq M_1$, with $M_1$ independent of $n$ 
and hence $\|{v_n}\|_{H^1(\Omega)}\leq M_2$, with $M_2$ independent of $n$. Due to the weak compactness of $H^1(\Omega)$, there exists $v \in H^1(\Omega)$ 
such that, possibly up to a subsequence, $ v_n  \rightharpoonup v$ in $H^1(\Omega)$.
Hence, $v_n \rightarrow v$ strongly in $L^2(\Omega)$ and $v_n \rightarrow v$ a.e. in $\Omega$.
Since $v_n(1-v_n)\leq 1/4$, by means of the Lebesgue's dominated convergence theorem, we get
\begin{equation*}
	\int_{\Omega} v_{n}(1-v_{n}) \rightarrow \int_{\Omega} v(1-v).
\end{equation*}
	Moreover, by the lower semicontinuity of the $H^1(\Omega)$ norm with respect to the weak convergence, we obtain
\begin{equation*}
	\begin{aligned}
	\|v\|_{H^1(\Omega)}^2 &\leq \liminf\limits_{n\to+\infty}  \|{v_{n}}\|_{H^1(\Omega)}^2, \\ 
	\|{v}\|_{L^2(\Omega)}^2 + \|{\nabla v}\|_{L^2(\Omega)}^2 &\leq  \lim\limits_{n\to+\infty} \|{v_{n}}\|_{L^2(\Omega)}^2 + \liminf\limits_{n\to+\infty} \|{\nabla v_{n}}\|_{L^2(\Omega)}^2\\
&= \|{v}\|_{L^2(\Omega)}^2 + \liminf\limits_{n\to+\infty} \|{\nabla v_{n}}\|_{L^2(\Omega)}^2,\\ 
	\|{\nabla v}\|_{L^2(\Omega)}^2 &\leq \liminf\limits_{n\to+\infty} \|{\nabla v_{n}}\|_{L^2(\Omega)}^2.
	\end{aligned}
\end{equation*}
By the last inequality and the convergence of $v_n$ to $v$ a.e., by the use of Proposition \ref{continuity} and the fact that $v_n$ is a minimizing sequence, we have
\begin{equation*}
\begin{aligned}
J_{\delta,\varepsilon}(v) &= \frac12 \|u_{\delta}(v)-u_{meas}\|_{L^2(\Sigma_N)}^2
+ \widetilde{\alpha} \int_{\Omega}\left( \varepsilon|\nabla v|^2 + \frac{1}{\varepsilon}v(1-v)\right)\\
&\leq \liminf\limits_{n\to+\infty}\left ( \frac12 \|u_{\delta}(v_n)-u_{meas}\|_{L^2(\Sigma_N)}^2
+ \widetilde{\alpha} \int_{\Omega}\left( \varepsilon|\nabla v_n|^2 + \frac{1}{\varepsilon}v_n(1-v_n)\right) \right )\\
&=\lim\limits_{n\to+\infty}	J_{\delta,\varepsilon}(v_n) = M.
\end{aligned}
\end{equation*}
Finally, by pointwise convergence, we know that $0 \leq v \leq 1$ a.e. in $\Omega$ and $v = 1$ a.e. in $\Omega^{d_0/2}$.
Hence, $v$ is a minimum of $J_{\delta,\varepsilon}$ in $\mathcal{K}$.
\end{proof}


\subsection{Necessary optimality conditions}
\label{secop}
In this section we provide an expression for the first order necessary optimality condition associated with the minimization problem \eqref{minrel},
formulated as a variational inequality involving the Fr\'{e}chet derivative of $J_{\delta,\varepsilon}$.
\begin{proposition}\label{Prop:OC}
Define the map $F:\mathcal{K} \rightarrow H^1(\Omega), \, F(v)  = u_{\delta}(v)$, $u_{\delta}(v)$ solution to \eqref{elasticity}.
Then the operators $F$ and $J_{\delta,\varepsilon}$ (for every $\delta, \varepsilon>0$) are Fr\'{e}chet-differentiable on $\mathcal{K} \subset L^{\infty}(\Omega) \cap H^1(\Omega)$.

Moreover, any minimizer $v_{\delta,\varepsilon}$  of $J_{\delta,\varepsilon}$ satisfies the variational inequality
	\begin{equation}
	J'_{\delta,\varepsilon}(v_{\delta,\varepsilon})[\omega - v_{\delta,\varepsilon}] \geq 0, \qquad \forall \omega \in \mathcal{K},
	\label{OC}
	\end{equation}
	where
	\begin{equation}
		J'_{\delta,\varepsilon}(v)[\vartheta] = \int_\Omega\vartheta(\mathbb{C}_0 - \mathbb{C}_1) \widehat\nabla u_{\delta}(v) : \widehat\nabla p_{\delta}(v)
+  2 \widetilde{\alpha} \varepsilon \int_{\Omega}\widehat\nabla v : \widehat\nabla \vartheta
+ \frac{\widetilde{\alpha}}{\varepsilon}\int_{\Omega}(1-2v)\vartheta.
	\label{VI}
	\end{equation}
Here $\vartheta \in \mathcal{K}-v=\{z \ s.t. \ z+v \in \mathcal{K}\}$ and $p_{\delta}\in H_{\Sigma_D}^1(\Omega)$ is the solution to the \textit{adjoint problem}
\begin{equation}
	\int_{\Omega} \mathbb{C}_{\delta}(v) \widehat\nabla p_{\delta} : \widehat\nabla \psi  = \int_{\Sigma_N}{(u_{\delta}(v)-u_{meas})\psi}, \qquad \forall \psi \in H_{\Sigma_D}^1(\Omega).
\label{adjoint}
\end{equation}
\end{proposition}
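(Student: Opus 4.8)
The plan is to first establish Fréchet differentiability of the control-to-state map $F$, then obtain differentiability of $J_{\delta,\varepsilon}$ by the chain rule, next rewrite the shape-sensitivity term through the adjoint state $p_\delta$ to reach \eqref{VI}, and finally deduce \eqref{OC} from the convexity of $\mathcal K$.

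\textbf{Differentiability of $F$.} For $v\in\mathcal K$ and an admissible increment $\vartheta$ small in $L^\infty(\Omega)\cap H^1(\Omega)$ (so that $\mathbb C_\delta(v+\vartheta)$ remains strongly convex), I would formally differentiate the weak formulation \eqref{eq:weak_formulation_inclusion} using $\mathbb C_\delta(v+\vartheta)=\mathbb C_\delta(v)+(\mathbb C_1-\mathbb C_0)\vartheta$, and define the candidate derivative $u':=F'(v)[\vartheta]\in H^1_{\Sigma_D}(\Omega)$ as the unique solution of the linearized problem
\begin{equation*}
\int_\Omega \mathbb C_\delta(v)\widehat\nabla u':\widehat\nabla\varphi\,dx = -\int_\Omega \vartheta(\mathbb C_1-\mathbb C_0)\widehat\nabla u_\delta(v):\widehat\nabla\varphi\,dx, \qquad \forall\varphi\in H^1_{\Sigma_D}(\Omega).
\end{equation*}
Unique solvability follows from the strong convexity of $\mathbb C_\delta(v)$ (coercivity constant $\ge\delta\xi_0$) and Lax--Milgram, exactly as for \eqref{elasticity}, and yields $\|u'\|_{H^1(\Omega)}\le c\|\vartheta\|_{L^\infty(\Omega)}\|u_\delta(v)\|_{H^1(\Omega)}$. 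To confirm this is the Fréchet derivative, I set $w:=u_\delta(v+\vartheta)$, $u:=u_\delta(v)$, $r:=w-u-u'$; subtracting the weak formulations for $w$ and $u$ and the defining equation for $u'$ shows
\begin{equation*}
\int_\Omega \mathbb C_\delta(v)\widehat\nabla r:\widehat\nabla\varphi\,dx = -\int_\Omega \vartheta(\mathbb C_1-\mathbb C_0)\widehat\nabla(w-u):\widehat\nabla\varphi\,dx, \qquad \forall\varphi\in H^1_{\Sigma_D}(\Omega).
\end{equation*}
Testing with $\varphi=r$ and using coercivity gives $\|r\|_{H^1(\Omega)}\le c\|\vartheta\|_{L^\infty(\Omega)}\|w-u\|_{H^1(\Omega)}$, while the continuity estimate already derived in the proof of Proposition \ref{continuity} yields $\|w-u\|_{H^1(\Omega)}\le c\|\vartheta\|_{L^\infty(\Omega)}\|u\|_{H^1(\Omega)}$. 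Hence $\|r\|_{H^1(\Omega)}\le c\|\vartheta\|_{L^\infty(\Omega)}^2=o(\|\vartheta\|)$, so $F$ is Fréchet differentiable on $\mathcal K\subset L^\infty(\Omega)\cap H^1(\Omega)$.

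\textbf{Differentiability of $J_{\delta,\varepsilon}$.} I would write the misfit term as the composition of the smooth map $\phi\mapsto\tfrac12\|\phi-u_{meas}\|_{L^2(\Sigma_N)}^2$ with the bounded linear trace $H^1(\Omega)\to L^2(\Sigma_N)$ and with $F$; by the chain rule it is differentiable with derivative $\int_{\Sigma_N}(u_\delta(v)-u_{meas})\cdot F'(v)[\vartheta]$. The Dirichlet energy $\varepsilon\int_\Omega|\nabla v|^2$ is a bounded quadratic form with derivative $2\varepsilon\int_\Omega \nabla v\cdot\nabla\vartheta$, and the smooth potential $\tfrac1\varepsilon\int_\Omega v(1-v)$ contributes $\tfrac1\varepsilon\int_\Omega(1-2v)\vartheta$; multiplying by $\widetilde\alpha$ and summing gives Fréchet differentiability of $J_{\delta,\varepsilon}$ together with the last two terms of \eqref{VI}.

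\textbf{Adjoint reformulation.} The misfit contribution still depends on $\vartheta$ only implicitly through the PDE-defined $u'$. To make it explicit, I would test the adjoint problem \eqref{adjoint} with $\psi=u'$ and the linearized equation for $u'$ with $\varphi=p_\delta$; by the full symmetry of $\mathbb C_\delta(v)$ (Assumption \ref{ass:elasticity_tensor}) these two identities combine into
\begin{equation*}
\int_{\Sigma_N}(u_\delta(v)-u_{meas})\cdot u' = -\int_\Omega \vartheta(\mathbb C_1-\mathbb C_0)\widehat\nabla u_\delta(v):\widehat\nabla p_\delta\,dx = \int_\Omega \vartheta(\mathbb C_0-\mathbb C_1)\widehat\nabla u_\delta(v):\widehat\nabla p_\delta\,dx,
\end{equation*}
which is exactly the first term of \eqref{VI}; collecting the three contributions produces the stated expression for $J'_{\delta,\varepsilon}(v)[\vartheta]$.

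\textbf{Optimality condition and main obstacle.} Since $\mathcal K$ is convex, for a minimizer $v_{\delta,\varepsilon}$ and any $\omega\in\mathcal K$ the segment $v_{\delta,\varepsilon}+t(\omega-v_{\delta,\varepsilon})$ lies in $\mathcal K$ for $t\in[0,1]$; differentiating $t\mapsto J_{\delta,\varepsilon}(v_{\delta,\varepsilon}+t(\omega-v_{\delta,\varepsilon}))$ at $t=0^+$ and invoking minimality yields $J'_{\delta,\varepsilon}(v_{\delta,\varepsilon})[\omega-v_{\delta,\varepsilon}]\ge0$, i.e.\ \eqref{OC}. I expect the main obstacle to be the rigorous remainder estimate in the first step: bounding $(\mathbb C_1-\mathbb C_0)\vartheta\,\widehat\nabla(w-u)$ in $L^2(\Omega)$ forces the use of $\|\vartheta\|_{L^\infty(\Omega)}$ rather than merely $\|\vartheta\|_{H^1(\Omega)}$, which is precisely why differentiability is asserted on $\mathcal K\subset L^\infty(\Omega)\cap H^1(\Omega)$. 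Keeping track of the correct topology and of the admissibility of increments is the delicate point, whereas the adjoint computation and the convexity argument are routine.
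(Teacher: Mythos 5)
Your proposal is correct and follows essentially the same route as the paper: define the candidate derivative $u^{\sharp}=F'(v)[\vartheta]$ via the linearized problem, prove the Lipschitz estimate $\|u_{\delta}(v+\vartheta)-u_{\delta}(v)\|_{H^1(\Omega)}\leq c\|\vartheta\|_{L^\infty(\Omega)}$, bound the remainder by $c\|\vartheta\|_{L^\infty(\Omega)}^2$ by testing with the remainder itself, apply the chain rule, eliminate $F'(v)[\vartheta]$ through the adjoint problem using the symmetry of $\mathbb{C}_\delta(v)$, and conclude \eqref{OC} from convexity of $\mathcal{K}$. The only (immaterial) differences are that you invoke the Lipschitz estimate already obtained in the proof of Proposition \ref{continuity} instead of re-deriving it, and that the smallness of $\vartheta$ is not actually needed for coercivity, since $v+\vartheta\in\mathcal{K}$ already guarantees $\mathbb{C}_\delta(v+\vartheta)\geq\delta\xi_0$ on symmetric matrices.
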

\begin{proof}
First we prove that $F$ is Fr\'{e}chet differentiable in $L^{\infty}(\Omega)$. More precisely,
$$F'(v)[\vartheta] = u^{\sharp}(v), \text{ for }\vartheta \in L^{\infty}(\Omega)\cap (\mathcal{K}-v),$$
where $u^{\sharp}(v)$ is the solution in $H_{\Sigma_D}^1(\Omega)$ of
\begin{equation}
	\int_{\Omega} \mathbb{C}_{\delta}(v)\widehat\nabla u^{\sharp}(v):\widehat\nabla \varphi  = \int_{\Omega} \vartheta(\mathbb{C}_0 - \mathbb{C}_1)\widehat\nabla u_{\delta}(v) : \widehat\nabla \varphi,
\quad \forall \varphi \in H_{\Sigma_D}^1(\Omega),
	\label{derprob}
\end{equation}
namely,
\begin{equation}
	\|{F(v+\vartheta) - F(v) - u^{\sharp}(v)}\|_{H^1(\Omega)} = o(\|{\vartheta}\|_{L^{\infty}(\Omega)}).
\label{Frechet}
\end{equation}
To this aim, we first show that
$$ \|{u_{\delta}(v+\vartheta) - u_{\delta}(v)}\|_{H^1(\Omega)} \leq c \|{\vartheta}\|_{L^{\infty}(\Omega)}, \, \text{ for \, }\vartheta \in L^{\infty}(\Omega)\cap (\mathcal{K}-v).$$
Indeed, the difference $u_{\delta}(v+\vartheta)-u_{\delta}(v)$ satisfies
\begin{equation}\label{difference}
\begin{aligned}
&\int_{\Omega}\mathbb{C}_{\delta}(v + \vartheta) \widehat\nabla (u_{\delta}(v+\vartheta)-u_{\delta}(v)) : \widehat\nabla \varphi\\
+ &\int_{\Omega}(\mathbb{C}_{\delta}(v + \vartheta) - \mathbb{C}_{\delta}(v)) \widehat\nabla u_{\delta}(v) : \widehat\nabla \varphi  = 0, \quad \forall \varphi \in H_{\Sigma_D}^1(\Omega).
\end{aligned}
\end{equation}
Taking $\varphi = u_{\delta}(v+\vartheta)-u_{\delta}(v)$ and recalling that $\mathbb{C}_{\delta}(v + \vartheta) - \mathbb{C}_{\delta}(v) = (\mathbb{C}_1 - \mathbb{C}_0)\vartheta$,  we obtain
\begin{equation}\label{differencebis}
\begin{aligned}
&\int_{\Omega}\mathbb{C}_{\delta}(v + \vartheta) \widehat\nabla (u_{\delta}(v+\vartheta)-u_{\delta}(v)):\widehat\nabla (u_{\delta}(v+\vartheta)-u_{\delta}(v)) \\
=
- &\int_{\Omega} \vartheta(\mathbb{C}_1 - \mathbb{C}_0)\widehat\nabla u_{\delta}(v) : \widehat\nabla (u_{\delta}(v+\vartheta)-u_{\delta}(v)).
\end{aligned}
\end{equation}
Hence, by using the assumptions on the elasticity tensors, Korn and Poincar\'e inequalities,
and the fact that $v + \vartheta \in \mathcal{K}$, we obtain
\begin{equation}\label{estdiff}
\begin{aligned}
&\|{u_{\delta}(v+\vartheta) - u_{\delta}(v)}\|_{H^1(\Omega)} \leq c\|{\vartheta}\|_{L^\infty(\Omega)}\|{\widehat{\nabla} u_{\delta}(v)}\|_{L^2(\Omega)} \\
&\leq c\|{\vartheta}\|_{L^\infty(\Omega)}\|{u_{\delta}(v)}\|_{H^1(\Omega)}
\leq c\|{\vartheta}\|_{L^\infty(\Omega)}\|g\|_{L^2(\Sigma_N)}\leq c\|{\vartheta}\|_{L^\infty(\Omega)}.
\end{aligned}
\end{equation}
We now estimate $u_{\delta}(v+\vartheta)-u_{\delta}(v) - u^{\sharp}(v)$. Subtracting \eqref{derprob} from \eqref{difference} and setting $\omega= u_{\delta}(v+\vartheta)-u_{\delta}(v)$, then it holds
\begin{equation}
\int_{\Omega}\mathbb{C}_{\delta}(v + \vartheta) \widehat\nabla \omega : \widehat\nabla \varphi
- \int_{\Omega}\mathbb{C}_{\delta}(v) \widehat\nabla u^{\sharp}(v) : \widehat\nabla \varphi  = 0,
\end{equation}
from which
\begin{equation}
\begin{aligned}
\int_{\Omega}\mathbb{C}_{\delta}(v) \widehat\nabla (\omega - u^{\sharp}(v)) : \widehat\nabla \varphi
&= - \int_{\Omega}(\mathbb{C}_{\delta}(v + \vartheta) - \mathbb{C}_{\delta}(v))\widehat\nabla \omega : \widehat\nabla \varphi\\
&= \int_{\Omega}\vartheta(\mathbb{C}_0 - \mathbb{C}_1)\widehat\nabla \omega : \widehat\nabla \varphi.
\end{aligned}
\label{difference3}
\end{equation}
Choosing now $\varphi = \omega - u^{\sharp}(v)$, we get
\begin{equation}
\int_{\Omega}\mathbb{C}_{\delta}(v)\widehat\nabla (\omega - u^{\sharp}(v)):\widehat\nabla (\omega - u^{\sharp}(v))
= \int_{\Omega}(\mathbb{C}_0 - \mathbb{C}_1)\vartheta\widehat\nabla \omega : \widehat\nabla (\omega - u^{\sharp}(v)),
\label{difference4}
\end{equation}
and again by the boundedness of the elasticity tensors and the use of Korn and Poincar\'e inequalities it follows
\begin{equation}
\|\omega - u^{\sharp}(v)\|_{H^1(\Omega)}  = \|u(v+ \vartheta) - u_{\delta}(v) - u^{\sharp}(v)\|_{H^1(\Omega)} \leq c \|\vartheta\|^2_{L^\infty(\Omega)},
\label{difference6}
\end{equation}
so that $F^\prime(v)[\theta] = u^{\sharp}(v)$.

We now prove that $J_{\delta,\varepsilon}$ is Fr\'{e}chet differentiable. By means of the chain rule and the Frech\'{e}t differentiability of $F$,
we compute the expression of $J'_{\delta,\varepsilon}(v)$, i.e.,
\begin{equation}
	J'_{\delta,\varepsilon}(v)[\vartheta] = \int_{\Sigma_N} (F(v)-u_{meas}) F'(v)[\vartheta]
+ \widetilde{\alpha} \int_{\Omega}\Big( 2\varepsilon \nabla v : \nabla \vartheta + \frac{1}{\varepsilon}(1-2v)\vartheta \Big ),
\label{first}
\end{equation}
where, with abuse of notation, $F(v)$ and $F'(v)[\vartheta]$ denote the trace of $F(v)$ and $F'(v)[\vartheta]$ on $\Sigma_N$, respectively.
By the definition of the adjoint problem and of $u^{\sharp}(v)$, we get
\begin{equation}
\begin{aligned}
\int_{\Sigma_N} (F(v)-u_{meas}) F'(v)[\vartheta] =& \int_{\Sigma_N} (F(v)-u_{meas})u^{\sharp}(v)
 =  \\
=& \int_{\Omega}(\mathbb{C}_0 - \mathbb{C}_1)\vartheta \widehat\nabla F(v) : \widehat\nabla p_{\delta}
\end{aligned}
\end{equation}
and hence
\[
 J_{\delta,\varepsilon}'(v)[\vartheta] = \int_{\Omega}(\mathbb{C}_0 - \mathbb{C}_1)\vartheta \widehat\nabla F(v): \widehat\nabla p_{\delta}
 + \widetilde{\alpha} \int_{\Omega}\left( 2\varepsilon \nabla v \cdot \nabla \vartheta + \frac{1}{\varepsilon}(1-2v)\vartheta \right).
\]
Finally, by standard arguments, since $J_{\delta,\varepsilon}$ is a continuous and Frech\'{e}t differentiable functional on a convex subset $\mathcal{K}$ of the Banach space $H^1(\Omega)$,
the optimality conditions for the optimization problem \eqref{minrel} are expressed in terms of the variational inequality \eqref{OC}.
\end{proof}

\section{Discretization and reconstruction algorithm}
\label{sec:discretization} 

\subsection{Convergence analysis}
Here, we assume that $\Omega$ is a polygonal ($d=2$) or polyhedral ($d=3$) domain. 
Again, for simplifying the notation, we denote by $u:=u_{\delta}$ and $p:=p_{\delta}$.

Let $(\mathcal{T}_h)_{0<h\leq h_0}$ be a regular triangulation of $\Omega$ and define 
\begin{equation}\label{calV}
\mathcal{V}_h := \{ v_h \in C(\overline\Omega) \, : \, v_h|_\mathcal{T} \in \mathcal{P}_1(\mathcal{T}), \, \forall \,  \mathcal{T} \in \mathcal{T}_h\}, 
    \end{equation}
where $\mathcal{P}_1(\mathcal{T})$ is the set of polynomials of first degree on $\mathcal{T}$, and
\begin{equation}\label{calKh}
\mathcal{K}_h :=  \mathcal{V}_h \cap\mathcal{K} , 
\quad 
\mathcal{V}_{h,\Sigma_D}:=\mathcal{V}_{h}\cap H^1_{\Sigma_D}(\Omega).
\end{equation}
For every $h>0$, we set $u_h:=(u_{\delta})_h:\mathcal{K}\to \mathcal{V}_{h,\Sigma_D}$ where $u_h$ is solution to 
\begin{equation}\label{eq:u_h}
\int_\Omega\mathbb{C}_{\delta}(v) \widehat\nabla u_h(v): \widehat\nabla \varphi_h = \int_{\Sigma_N} g_h\cdot\varphi_h, \quad \forall \varphi_h \in \mathcal{V}_{h,\Sigma_D}.
\end{equation}
Here $g_h$ is a piecewise linear, continuous approximation of $g$ such that $g_h\to g$ in $L^2(\Sigma_N)$ as $h\to 0$. \\

As in \cite{DEV2016}, one can show that for every 
$v\in\mathcal{K}$ there exists a sequence $v_h\in \mathcal{K}_h$ such that $v_h \to v$ in $H^1(\Omega)$. Most of the following results are an adaptation of those presented in \cite{DEV2016} for a scalar equation to the case of the elasticity system, hence we do not provide the proofs for some of them.

The following lemma is a consequence of the continuity and coercivity of the bilinear form on the left-hand side of \eqref{eq:u_h} and C\'{e}a's Lemma (see, e.g., \cite{BRV2018}).
\begin{lemma}
Let $g\in L^2(\Sigma_N)$. Then, $\forall\, v\in \mathcal{K}$, $u_h(v) \to u(v)$ strongly in $H^1(\Omega)$ as $h\to 0$.
\end{lemma}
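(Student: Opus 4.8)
The plan is to treat this as a standard conforming finite element convergence result, complicated only by the fact that the discrete right-hand side uses the approximate datum $g_h$ instead of $g$, so that a pure Céa argument must be upgraded to a Strang-type estimate. Fix $v\in\mathcal{K}$ and abbreviate $a(w,\varphi):=\int_\Omega\mathbb{C}_{\delta}(v)\widehat\nabla w:\widehat\nabla\varphi$. First I would record that, since $\mathbb{C}_{\delta}(v)$ is strongly convex (uniformly in $v$, by the Remark following \eqref{eq:elasticity_tensor_inclusion}) and uniformly bounded, the form $a$ is continuous and coercive on $H^1_{\Sigma_D}(\Omega)$, with constants depending only on $\delta$, $\xi_0$ and the Korn--Poincaré constant $\overline{c}$ of Proposition \ref{lemma1}, hence \emph{independent of $h$}. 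As $\mathcal{V}_{h,\Sigma_D}\subset H^1_{\Sigma_D}(\Omega)$, both \eqref{eq:weak_formulation_inclusion} and \eqref{eq:u_h} are well posed with $h$-independent stability bounds.

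Next I would derive the perturbed Galerkin orthogonality. Testing \eqref{eq:weak_formulation_inclusion} with $\varphi_h\in\mathcal{V}_{h,\Sigma_D}$ and subtracting from \eqref{eq:u_h} gives
\[
a\big(u_h(v)-u(v),\varphi_h\big)=\int_{\Sigma_N}(g_h-g)\cdot\varphi_h,\qquad\forall\,\varphi_h\in\mathcal{V}_{h,\Sigma_D}.
\]
For arbitrary $w_h\in\mathcal{V}_{h,\Sigma_D}$ I would set $e_h:=u_h(v)-w_h\in\mathcal{V}_{h,\Sigma_D}$ and combine coercivity with the identity above, splitting $u_h(v)-w_h=(u_h(v)-u(v))+(u(v)-w_h)$, to obtain
\[
c\,\|e_h\|_{H^1(\Omega)}^2 \le a(e_h,e_h)
\le \Big(C_{\mathrm{tr}}\|g_h-g\|_{L^2(\Sigma_N)}+C\,\|u(v)-w_h\|_{H^1(\Omega)}\Big)\|e_h\|_{H^1(\Omega)},
\]
where $C$ is the continuity constant of $a$ and $C_{\mathrm{tr}}$ is the trace constant bounding $\|e_h\|_{L^2(\Sigma_N)}$ by $\|e_h\|_{H^1(\Omega)}$. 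Dividing by $\|e_h\|_{H^1(\Omega)}$ and applying the triangle inequality then yields the Strang-type bound
\[
\|u_h(v)-u(v)\|_{H^1(\Omega)}\le\Big(1+\tfrac{C}{c}\Big)\inf_{w_h\in\mathcal{V}_{h,\Sigma_D}}\|u(v)-w_h\|_{H^1(\Omega)}+\tfrac{C_{\mathrm{tr}}}{c}\,\|g_h-g\|_{L^2(\Sigma_N)}.
\]

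Finally I would let $h\to0$. The second term vanishes by the hypothesis $g_h\to g$ in $L^2(\Sigma_N)$. For the first term I would invoke the approximation property of the $\mathcal{P}_1$ spaces on the regular triangulation $(\mathcal{T}_h)$ of the polygonal/polyhedral domain $\Omega$: the union of the conforming spaces $\mathcal{V}_{h,\Sigma_D}$ is dense in $H^1_{\Sigma_D}(\Omega)$, so the best-approximation error of $u(v)\in H^1_{\Sigma_D}(\Omega)$ tends to zero. I expect this density statement to be the main point requiring care, since the approximants must also vanish on $\Sigma_D$; it is the exact companion of the density result for $\mathcal{K}_h$ in $\mathcal{K}$ quoted from \cite{DEV2016} just before the lemma, and rests on $\Sigma_D$ being resolved by the mesh together with a standard Clément or Scott--Zhang interpolation argument. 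Combining the two limits gives $u_h(v)\to u(v)$ strongly in $H^1(\Omega)$, as claimed.
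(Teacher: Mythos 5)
Your proof is correct and follows essentially the route the paper indicates, which is to combine the uniform (in $h$) continuity and coercivity of the bilinear form with C\'ea's lemma and the approximation property of the conforming $\mathcal{P}_1$ spaces. Your upgrade to a Strang-type estimate to account for the consistency error $\int_{\Sigma_N}(g_h-g)\cdot\varphi_h$ coming from the approximate datum $g_h$ in \eqref{eq:u_h} is a detail the paper glosses over but that is genuinely needed, and you handle it correctly.
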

Next we state a result concerning the continuity of $u_h$ in the space 
$\mathcal{V}_{h,\Sigma_D}$. 
\begin{proposition}\label{Proposition:5.1}
Let $h_k$, $v_{h_k}$ be two sequences such that 
$\lim\limits_{k\to +\infty} h_k=0$ and $v_{h_k}\in \mathcal{K}_{h_k}$ with $v_{h_k}\to v$ in $L^1(\Omega)$. Then 
$u_{h_k}(v_{h_k})\to u(v)$ in $H^1_{\Sigma_D}(\Omega)$ for $k\to +\infty$.
\end{proposition}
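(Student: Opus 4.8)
The plan is to combine two convergence effects: the convergence of the discrete solution $u_{h_k}(w)$ to the continuous solution $u(w)$ for a \emph{fixed} coefficient field $w$ (as in the previous lemma), and the continuous dependence of the solution on the coefficient field through $v_{h_k}\to v$ in $L^1(\Omega)$ (as in Proposition \ref{continuity}). Since both the discretization parameter $h_k$ and the coefficient $v_{h_k}$ vary simultaneously, I would introduce an intermediate quantity to decouple the two limits. The natural choice is $u_{h_k}(v)$, the discrete solution computed on the mesh $\mathcal{T}_{h_k}$ but with the \emph{limit} coefficient $v$, and to estimate via the triangle inequality
\begin{equation*}
\|u_{h_k}(v_{h_k})-u(v)\|_{H^1(\Omega)} \leq \|u_{h_k}(v_{h_k})-u_{h_k}(v)\|_{H^1(\Omega)} + \|u_{h_k}(v)-u(v)\|_{H^1(\Omega)}.
\end{equation*}

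The second term on the right tends to zero by the preceding lemma applied with the fixed coefficient $v\in\mathcal{K}$ (together with the assumption $g_{h_k}\to g$ in $L^2(\Sigma_N)$). For the first term I would mimic the argument of Proposition \ref{continuity}, but now at the discrete level. Subtracting the weak formulations \eqref{eq:u_h} for $v_{h_k}$ and for $v$ (both tested against $\varphi_h\in\mathcal{V}_{h_k,\Sigma_D}$, and noting the right-hand sides coincide since $g_{h_k}$ is the same), and setting $w_k:=u_{h_k}(v_{h_k})-u_{h_k}(v)$, one gets
\begin{equation*}
\int_\Omega \mathbb{C}_\delta(v_{h_k})\widehat\nabla w_k:\widehat\nabla\varphi_h + \int_\Omega(\mathbb{C}_\delta(v_{h_k})-\mathbb{C}_\delta(v))\widehat\nabla u_{h_k}(v):\widehat\nabla\varphi_h = 0.
\end{equation*}
Choosing $\varphi_h=w_k$, using strong convexity of $\mathbb{C}_\delta(v_{h_k})$ together with the Korn and Poincaré inequalities on the left, and the identity $\mathbb{C}_\delta(v_{h_k})-\mathbb{C}_\delta(v)=(\mathbb{C}_1-\mathbb{C}_0)(v_{h_k}-v)$ with the uniform bound on the tensor on the right, I would obtain
\begin{equation*}
\|w_k\|_{H^1(\Omega)} \leq c\,\|(v_{h_k}-v)\,\widehat\nabla u_{h_k}(v)\|_{L^2(\Omega)}.
\end{equation*}

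The main obstacle is controlling this last right-hand side, and here the subtlety differs from Proposition \ref{continuity}: the gradient factor $\widehat\nabla u_{h_k}(v)$ now depends on $k$, so a direct pointwise dominated-convergence argument is not immediately available. The cleanest route is to exploit that $u_{h_k}(v)\to u(v)$ strongly in $H^1(\Omega)$ (by the previous lemma), hence $\widehat\nabla u_{h_k}(v)\to\widehat\nabla u(v)$ strongly in $L^2(\Omega)$, while $(v_{h_k}-v)$ is uniformly bounded (since $0\le v_{h_k},v\le 1$) and tends to $0$ in $L^1$, hence, up to a subsequence, a.e.\ in $\Omega$. I would split
\begin{equation*}
\|(v_{h_k}-v)\widehat\nabla u_{h_k}(v)\|_{L^2(\Omega)} \leq \|(v_{h_k}-v)\widehat\nabla u(v)\|_{L^2(\Omega)} + \|(v_{h_k}-v)(\widehat\nabla u_{h_k}(v)-\widehat\nabla u(v))\|_{L^2(\Omega)}.
\end{equation*}
The first term vanishes by dominated convergence exactly as in Proposition \ref{continuity} (integrand bounded by $|\widehat\nabla u(v)|^2\in L^1$ and tending to $0$ a.e.), and the second is bounded by $\|v_{h_k}-v\|_{L^\infty}\|\widehat\nabla u_{h_k}(v)-\widehat\nabla u(v)\|_{L^2}\leq \|\widehat\nabla u_{h_k}(v)-\widehat\nabla u(v)\|_{L^2}\to 0$ by the strong $H^1$ convergence. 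Thus $\|w_k\|_{H^1(\Omega)}\to 0$, and combining with the second triangle-inequality term yields $u_{h_k}(v_{h_k})\to u(v)$ in $H^1(\Omega)$. Since every subsequence admits a further subsequence converging to the same limit $u(v)$, the whole sequence converges, completing the proof.
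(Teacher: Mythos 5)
Your proof is correct and complete. The paper does not actually write out this argument---it only states that ``the proof can be obtained reasoning similarly as in Lemma 3.1 of \cite{DEV2016}''---but your decomposition through the intermediate discrete solution $u_{h_k}(v)$, combining the fixed-coefficient convergence $u_{h_k}(v)\to u(v)$ from the preceding lemma with a discrete analogue of Proposition \ref{continuity} (coercivity of $\mathbb{C}_\delta(v_{h_k})$, the identity $\mathbb{C}_\delta(v_{h_k})-\mathbb{C}_\delta(v)=(\mathbb{C}_1-\mathbb{C}_0)(v_{h_k}-v)$, dominated convergence, and the final subsequence-of-a-subsequence step) is exactly the standard route that reference follows, and you correctly handle the one genuine subtlety, namely that the gradient factor $\widehat\nabla u_{h_k}(v)$ itself depends on $k$.
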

\begin{proof} The proof can be obtained reasoning similarly as in Lemma 3.1 of \cite{DEV2016}.
\end{proof}
Let $J_{\delta,\varepsilon,h}:\mathcal{K}_h\to \mathbb{R}$ be the approximation to $J_{\delta,\varepsilon}$ defined as follows
\begin{equation}\label{Jepsh}
J_{\delta,\varepsilon,h}:=\frac{1}{2}\| u_h(v_h)-u_{meas,h}\|^2_{L^2(\Sigma_N)}+\widetilde{\alpha}\int_{\Omega}\varepsilon
|\nabla v_h|^2+\frac{1}{\varepsilon}v_h(1-v_h),
\end{equation}
where we assume that $u_{meas,h}\to u_{meas}$, as $h\to 0$.
Similarly as in Theorem 3.2 of \cite{DEV2016}, we can show the following result.
\begin{theorem}
There exists $v_h\in \mathcal{K}_h$ such that 
$J_{\delta,\varepsilon,h}(v_h)=\min_{\eta_h\in \mathcal{K}_h}
J_{\delta,\varepsilon,h}(\eta_h)$. Moreover, let $h_k$ be such that $\lim_{k\to +\infty} h_k=0$. Then every sequence $v_{h_k}$ has a subsequence converging strongly in $H^1(\Omega)$ and a.e. in $\Omega$ to a minimum 
of $J_{\delta,\varepsilon}$.
\end{theorem}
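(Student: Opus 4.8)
The plan is to split the statement into two parts: for each fixed $h$, existence of a minimizer of $J_{\delta,\varepsilon,h}$ over $\mathcal{K}_h$; and, as the mesh size tends to zero, convergence of these discrete minimizers to a minimizer of $J_{\delta,\varepsilon}$. For the existence part I would argue by finite-dimensional compactness. For fixed $h$ the set $\mathcal{K}_h=\mathcal{V}_h\cap\mathcal{K}$ is a closed, bounded and convex subset of the finite-dimensional space $\mathcal{V}_h$, hence compact. The map $v_h\mapsto u_h(v_h)$ is continuous on $\mathcal{K}_h$, by well-posedness of \eqref{eq:u_h} together with the continuous dependence of the discrete solution on the coefficient $\mathbb{C}_\delta(v_h)$, so the fidelity term $\|u_h(v_h)-u_{meas,h}\|^2_{L^2(\Sigma_N)}$ is continuous in $v_h$; the Modica--Mortola terms are clearly continuous as well. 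Thus $J_{\delta,\varepsilon,h}$ is continuous on the compact set $\mathcal{K}_h$ and attains its minimum by Weierstrass' theorem.

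For the convergence part, let $h_k\to 0$ and let $v_{h_k}$ minimize $J_{\delta,\varepsilon,h_k}$. Comparing with the fixed competitor $0\in\mathcal{K}$ (which lies in every $\mathcal{K}_{h_k}$), minimality gives a uniform bound on $J_{\delta,\varepsilon,h_k}(v_{h_k})$, hence on $\varepsilon\|\nabla v_{h_k}\|^2_{L^2(\Omega)}$. Since $0\le v_{h_k}\le 1$, the sequence is uniformly bounded in $H^1(\Omega)$, so up to a subsequence $v_{h_k}\rightharpoonup v$ in $H^1(\Omega)$, with $v_{h_k}\to v$ in $L^2(\Omega)$ and a.e. in $\Omega$. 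The constraints pass to the a.e.\ limit, yielding $0\le v\le 1$ a.e.\ and $v=0$ a.e.\ in $\Omega^{d_0/2}$, so $v\in\mathcal{K}$.

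Next I would establish optimality of the limit by a liminf/limsup sandwich. For the liminf inequality $J_{\delta,\varepsilon}(v)\le\liminf_k J_{\delta,\varepsilon,h_k}(v_{h_k})$: the gradient term is weakly lower semicontinuous, the potential $\int_\Omega\tfrac{1}{\varepsilon}v_{h_k}(1-v_{h_k})$ converges by dominated convergence, and the fidelity term converges because Proposition~\ref{Proposition:5.1} gives $u_{h_k}(v_{h_k})\to u(v)$ in $H^1(\Omega)$, hence strongly on $\Sigma_N$ after taking traces, together with $u_{meas,h_k}\to u_{meas}$. For the matching upper bound, given any $w\in\mathcal{K}$ I would choose a recovery sequence $w_{h_k}\in\mathcal{K}_{h_k}$ with $w_{h_k}\to w$ in $H^1(\Omega)$, whose existence is guaranteed by the density of $\mathcal{K}_h$ in $\mathcal{K}$ recalled before the lemma; minimality gives $J_{\delta,\varepsilon,h_k}(v_{h_k})\le J_{\delta,\varepsilon,h_k}(w_{h_k})$, and the right-hand side converges to $J_{\delta,\varepsilon}(w)$, since strong $H^1$ convergence handles the Modica--Mortola terms and Proposition~\ref{Proposition:5.1} the fidelity term. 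Combining the two bounds and letting $w$ range over $\mathcal{K}$ shows $J_{\delta,\varepsilon}(v)\le\inf_{\mathcal{K}}J_{\delta,\varepsilon}$, so $v$ minimizes $J_{\delta,\varepsilon}$, and moreover $J_{\delta,\varepsilon,h_k}(v_{h_k})\to J_{\delta,\varepsilon}(v)$.

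Finally, to upgrade weak to strong convergence, the convergence of the full energies together with the already established convergence of the fidelity and potential terms forces $\varepsilon\|\nabla v_{h_k}\|^2_{L^2(\Omega)}\to\varepsilon\|\nabla v\|^2_{L^2(\Omega)}$. Since $\nabla v_{h_k}\rightharpoonup\nabla v$ weakly in $L^2(\Omega)$, norm convergence plus weak convergence in the Hilbert space $L^2(\Omega)$ upgrades to strong convergence of the gradients; combined with the strong $L^2$ convergence of $v_{h_k}$ itself, this yields $v_{h_k}\to v$ strongly in $H^1(\Omega)$ and a.e.\ in $\Omega$, as claimed. The step I expect to be the main obstacle is the simultaneous passage to the limit in the mesh parameter and in the coefficient of the state equation, i.e.\ the joint convergence $u_{h_k}(v_{h_k})\to u(v)$; this is exactly the content of Proposition~\ref{Proposition:5.1}, which I would invoke rather than reprove. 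A secondary technical point is the construction of recovery sequences $w_{h_k}\in\mathcal{K}_{h_k}$ respecting both the box constraint $0\le w_{h_k}\le 1$ and the vanishing condition on $\Omega^{d_0/2}$, which relies on the density statement recalled before the lemma.
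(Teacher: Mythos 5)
Your proposal is correct and follows the standard route that the paper itself only sketches: the paper gives no proof of this theorem, deferring entirely to Theorem 3.2 of the cited work of Deckelnick--Elliott--Styles, and the argument expected there is exactly yours (Weierstrass on the compact finite-dimensional set $\mathcal{K}_h$, uniform energy bound via the competitor $0\in\mathcal{K}_{h_k}$, a liminf/recovery-sequence sandwich using Proposition~\ref{Proposition:5.1} and the interpolation property stated before the lemma, and the norm-plus-weak-convergence upgrade to strong $H^1$ convergence). The only ingredients you take on faith — the joint limit $u_{h_k}(v_{h_k})\to u(v)$ and the existence of recovery sequences in $\mathcal{K}_{h_k}$ — are precisely the two facts the paper records immediately before the theorem, so invoking them rather than reproving them is consistent with the paper's own structure.
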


In our numerical algorithm we approximately solve \eqref{minrel} and so we  look for an admissible point $v_h\in \mathcal{K}_h$ that satisfies the first order necessary condition 
\begin{equation}
	J'_{\delta,\varepsilon,h}(v_h)[\omega_h - v_h] \geq 0, \qquad \forall \omega_h \in \mathcal{K}_h,
	\label{OCh}
	\end{equation}
rather than trying to locate a global minimum of $J_{\delta,\varepsilon,h}$. To this aim, we consider the discrete adjoint problem: find $p_h:=(p_{\delta})_h\in \mathcal{V}_{h,\Sigma_D}$ such that 
\begin{equation}
	\int_{\Omega} \mathbb{C}_{\delta}(v_h) \widehat\nabla p_h : \widehat\nabla \psi_h  = \int_{\Sigma_N}{(u_h(v_h)-u_{meas,h})\psi_h}, \qquad \forall \psi_h \in \mathcal{V}_{h,\Sigma_D}.
\label{discradjoint}
\end{equation}
Then using $v_h\in \mathcal{K}_h$, we can prove the discrete version of Proposition \ref{Prop:OC}, where the discrete variational inequality reads as:
\begin{eqnarray}
 &&\int_\Omega(\mathbb{C}_0 - \mathbb{C}_1)(\omega_h-v_h) \widehat\nabla u_h(v_h) : \widehat\nabla p_h
+  2 \widetilde{\alpha} \varepsilon \int_{\Omega}\nabla v_h \cdot \nabla (\omega_h-v_h)\nonumber\\
&&\quad+ \frac{\widetilde{\alpha}}{\varepsilon}\int_{\Omega}(1-2v_h)(\omega_h-v_h)
\geq 0, \quad \forall \omega_h\in \mathcal{K}_h\label{OCh:2}.
\end{eqnarray}
Then, we can prove the following theorem:
\begin{theorem}
Let $h_k$ be such that $\lim_{k\to+\infty} h_k=0$ and 
$v_{h_k}$ be a sequence satisfying \eqref{OCh}. Then there exists a subsequence of $v_{h_k}$ that converges strongly in $H^1(\Omega)$ and a.e. in $\Omega$ to a solution $v$ of \eqref{OC}.
\end{theorem}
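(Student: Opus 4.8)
The plan is to follow the Galerkin-convergence strategy of \cite{DEV2016}: first extract a weak $H^1$ limit of the discrete critical points, then pass to the limit in the discrete variational inequality \eqref{OCh:2} to recover \eqref{OC}, and finally upgrade the convergence from weak to strong by testing with the limit itself.

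First I would establish uniform bounds. Since $v_{h_k}\in\mathcal{K}_{h_k}\subset\mathcal{K}$ we have $0\le v_{h_k}\le 1$ a.e., hence $\|v_{h_k}\|_{L^2(\Omega)}$ is bounded. For the state and adjoint, the uniform coercivity of $\mathbb{C}_{\delta}(v_{h_k})$ (guaranteed by Assumption \ref{ass:elasticity_tensor} and $\delta>0$) together with the Korn--Poincar\'e inequalities applied to \eqref{eq:u_h} and \eqref{discradjoint} yields $\|u_{h_k}(v_{h_k})\|_{H^1(\Omega)}\le c$ and $\|p_{h_k}\|_{H^1(\Omega)}\le c$, uniformly in $k$ (using $g_{h_k}\to g$ and $u_{meas,h_k}\to u_{meas}$). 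To bound $\|\nabla v_{h_k}\|_{L^2(\Omega)}$ I would insert into \eqref{OCh:2} a fixed competitor $\omega_{h_k}\in\mathcal{K}_{h_k}$ with $\omega_{h_k}\to\bar\omega$ in $H^1(\Omega)$ for some fixed $\bar\omega\in\mathcal{K}$ (such a sequence exists by the approximability of $\mathcal{K}$ by $\mathcal{K}_h$). Writing $\nabla v_{h_k}\cdot\nabla(\omega_{h_k}-v_{h_k})=\nabla v_{h_k}\cdot\nabla\omega_{h_k}-|\nabla v_{h_k}|^2$ and using $0\le v_{h_k},\omega_{h_k}\le 1$ to bound the coupling and potential integrals by the just-established $H^1$ bounds on $u_{h_k},p_{h_k}$, a Young inequality absorbs $\tfrac12\|\nabla v_{h_k}\|_{L^2(\Omega)}^2$ and gives $\|\nabla v_{h_k}\|_{L^2(\Omega)}\le c$. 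Thus $v_{h_k}$ is bounded in $H^1(\Omega)$; up to a subsequence $v_{h_k}\rightharpoonup v$ in $H^1(\Omega)$, and by Rellich $v_{h_k}\to v$ in $L^2(\Omega)$ and a.e.\ in $\Omega$. The pointwise constraints and the collar condition defining $\mathcal{K}$ pass to the a.e.\ limit, so $v\in\mathcal{K}$.

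Next I would pass to the limit in the state and adjoint. Proposition \ref{Proposition:5.1} gives $u_{h_k}(v_{h_k})\to u(v)$ strongly in $H^1(\Omega)$. I would then prove the analogous statement for the adjoint: since $\mathbb{C}_{\delta}(v_{h_k})\to\mathbb{C}_{\delta}(v)$ a.e.\ with $v_{h_k}$ bounded, and since the data $u_{h_k}(v_{h_k})-u_{meas,h_k}$ converge in $L^2(\Sigma_N)$, a repetition of the stability argument behind Proposition \ref{Proposition:5.1} (subtract the defining identities \eqref{discradjoint} for $p_{h_k}$ and \eqref{adjoint} for $p_{\delta}(v)$, test with the difference, and use dominated convergence on the coefficient mismatch) yields $p_{h_k}\to p_{\delta}(v)$ strongly in $H^1(\Omega)$. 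In particular $\widehat\nabla u_{h_k}\to\widehat\nabla u(v)$ and $\widehat\nabla p_{h_k}\to\widehat\nabla p_{\delta}(v)$ strongly in $L^2(\Omega)$. Now fix an arbitrary $\omega\in\mathcal{K}$ and choose $\omega_{h_k}\in\mathcal{K}_{h_k}$ with $\omega_{h_k}\to\omega$ in $H^1(\Omega)$; insert it into \eqref{OCh:2}. In the coupling term $(\omega_{h_k}-v_{h_k})\to(\omega-v)$ strongly in $L^2$ and boundedly in $L^\infty$, while the two symmetric gradients converge strongly in $L^2$, so this term converges to $\int_\Omega(\mathbb{C}_0-\mathbb{C}_1)(\omega-v)\widehat\nabla u(v):\widehat\nabla p_{\delta}(v)$, and the potential term converges to $\tfrac{\widetilde\alpha}{\varepsilon}\int_\Omega(1-2v)(\omega-v)$. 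The Dirichlet term is the delicate one: I split $2\widetilde\alpha\varepsilon\int_\Omega\nabla v_{h_k}\cdot\nabla(\omega_{h_k}-v_{h_k})=2\widetilde\alpha\varepsilon\int_\Omega\nabla v_{h_k}\cdot\nabla\omega_{h_k}-2\widetilde\alpha\varepsilon\int_\Omega|\nabla v_{h_k}|^2$; the first piece converges to $2\widetilde\alpha\varepsilon\int_\Omega\nabla v\cdot\nabla\omega$ (weak $\times$ strong), while for the second only weak lower semicontinuity is available, giving $\liminf_k\int_\Omega|\nabla v_{h_k}|^2\ge\int_\Omega|\nabla v|^2$. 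The minus sign makes this work in the right direction: since the exactly-converging terms have definite limits and the quadratic term has a controlled $\limsup$, passing to $\limsup$ in the inequality \eqref{OCh:2} yields $J'_{\delta,\varepsilon}(v)[\omega-v]\ge 0$ with $J'_{\delta,\varepsilon}$ as in \eqref{VI}. As $\omega\in\mathcal{K}$ is arbitrary, $v$ solves \eqref{OC}. This interplay between the weak-only convergence of $\nabla v_{h_k}$ and the quadratic term $-|\nabla v_{h_k}|^2$ is the main obstacle, and it is precisely why lower semicontinuity, rather than a plain limit, must be used here.

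Finally I would upgrade to strong $H^1$ convergence. Choosing in \eqref{OCh:2} the admissible test sequence $\omega_{h_k}\to v$ in $H^1(\Omega)$ (the approximation of the limit itself), the coupling and potential terms now carry the factor $(\omega_{h_k}-v_{h_k})\to 0$ in $L^2$ and therefore vanish in the limit, while $\int_\Omega\nabla v_{h_k}\cdot\nabla\omega_{h_k}\to\int_\Omega|\nabla v|^2$; rearranging the inequality gives $\limsup_k\|\nabla v_{h_k}\|_{L^2(\Omega)}^2\le\|\nabla v\|_{L^2(\Omega)}^2$. Combined with the weak lower semicontinuity bound $\|\nabla v\|_{L^2(\Omega)}^2\le\liminf_k\|\nabla v_{h_k}\|_{L^2(\Omega)}^2$, this forces $\|\nabla v_{h_k}\|_{L^2(\Omega)}\to\|\nabla v\|_{L^2(\Omega)}$, and together with the weak convergence $\nabla v_{h_k}\rightharpoonup\nabla v$ in the Hilbert space $L^2(\Omega)$ it gives strong convergence of the gradients. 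Since $v_{h_k}\to v$ already in $L^2(\Omega)$, we conclude $v_{h_k}\to v$ strongly in $H^1(\Omega)$ and, along a further subsequence, a.e.\ in $\Omega$, completing the proof.
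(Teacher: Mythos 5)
Your proposal is correct and follows essentially the same route as the paper's proof: uniform bounds from the discrete state, adjoint, and variational inequality, extraction of a weak $H^1$ limit, strong convergence of $u_{h_k}$ via Proposition \ref{Proposition:5.1} and of $p_{h_k}$ by the analogous stability argument, passage to the limit in \eqref{OCh:2} using weak lower semicontinuity of the gradient term, and the final upgrade to strong convergence by testing with an approximation of the limit itself. The only cosmetic difference is that the paper obtains the $H^1$ bound on $v_{h_k}$ by simply testing \eqref{OCh:2} with $\omega_h=0$ rather than with a general competitor plus Young's inequality.
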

\begin{proof}
We set $v_k:=v_{h_k}$, $u_k:=u_{h_k}(v_{h_k})$ and $p_k:=p_{h_k}(v_{h_k})$. Testing \eqref{discradjoint} with $\psi_h=p_k$ we get 
$$	\int_{\Omega} \mathbb{C}_{\delta}(v_k) \widehat\nabla p_k :\widehat{\nabla}p_k = \int_{\Sigma_N}{(u_k-u_{meas,k})\cdot p_k}, $$
which yields, arguing as in \eqref{eq:H1_estimate} to get $H^1$-estimates, 
$$c \|p_k\|^2_{H^1(\Omega)} \leq \|u_k-u_{meas,k}\|_{L^2(\Sigma_N)}\|p_k\|_{L^2(\Sigma_N)}.$$
As the problem for $u_k$ is well-posed with  $u_k\in H^1_{\Sigma_D}(\Omega)$ and $u_{meas,k}\to u_{meas}$ 
(implying that $\|u_{meas,k}\|_{L^2(\Sigma_N)}$ is uniformly bounded with respect to $k$), we get 
$$\| p_k\|_{H^1(\Omega)}\leq c.$$ A similar result holds for $\| u_k\|_{H^1(\Omega)}$. Therefore 
\begin{equation}\label{unif_bound}
    \| p_k\|_{H^1(\Omega)} + \| u_k\|_{H^1(\Omega)} \leq c,\quad \text{uniformly~ in~} k.
\end{equation}
From \eqref{OCh:2}, employing $(1-2v_k)(w_k-v_k)\leq w_k + 2v_k^2$ and testing with $w_k=0\in \mathcal{K}_h$, we get 
\begin{equation}
    2\widetilde{\alpha} \varepsilon \int_\Omega \vert \nabla v_k\vert^2 \leq c \|\widehat\nabla u_k \|_{L^2(\Omega)} \|\widehat\nabla p_k  \|_{L^2(\Omega)} + \frac{2\widetilde{\alpha}}{\varepsilon}\vert \Omega\vert \leq c_{\varepsilon},
\end{equation}
where we used \eqref{unif_bound}.
Therefore, $v_k$ is bounded in $H^1(\Omega)$, hence there exists a subsequence (still denoted by $v_k$) and $v\in\mathcal{K}$ such that 
\begin{eqnarray}
&&v_k\rightharpoonup v\quad\text{in~}H^1(\Omega),\qquad v_k\to v\quad\text{in~}L^2(\Omega) ~~(\text{and~in~}L^1(\Omega)),\nonumber\\
&&v_k\to v \quad\text{a.e.~in~}\Omega.\nonumber
\end{eqnarray}
Thanks to Proposition \ref{Proposition:5.1} we have 
\begin{equation}\label{eq:uktou}
    u_k\to u \quad\text{in~}H^1_{\Sigma_D}(\Omega).
\end{equation}
Now, let $p\in H^1_{\Sigma_D}(\Omega)$ be the solution of the continuous adjoint problem and let $\hat{p}_k\in \mathcal{V}_{h_k,\Sigma_D}$ be such that $\hat{p}_k\to p$ in $H^{1}_{\Sigma_D}(\Omega)$. Taking the difference of the problems solved by $p$ and $p_k$, after some standard manipulation we get 
\begin{eqnarray}
&&\int_{\Omega}\mathbb{C}_{\delta}(v_k)\widehat\nabla(p_k-
\hat{p}_k):\widehat\nabla\psi\nonumber \\
&&= 
\int_{\Omega}\mathbb{C}_{\delta}(v_k)\widehat\nabla(p-
\hat{p}_k):\widehat\nabla\psi+
\int_{\Omega}(\mathbb{C}_{\delta}(v)-\mathbb{C}_{\delta}(v_k))\widehat\nabla p:\widehat\nabla\psi\nonumber\\ 
&&\ + 
\int_{\Sigma_N}(u_k-u)\cdot\psi + \int_{{\Sigma}_N}(u_{meas}-u_{meas,k})\cdot\psi,\nonumber
\end{eqnarray}
for all $\psi\in \mathcal{V}_{h_k,\Sigma_D}$.
Taking $\psi=p_k-\hat{p}_k$,  
we get
\begin{eqnarray}
&&\|p_k-\hat{p}_k \|_{H^1(\Omega)}\leq 
c \Big(\|\widehat\nabla(p-\hat{p}_k)\|_{L^2(\Omega)} 
+ \int_{\Omega} \vert v-v_k\vert^2 \vert \widehat\nabla p\vert^2\nonumber\\
&&+\|u_k-u\|_{L^2(\Sigma_N)} + 
\| u_{meas}-u_{meas,k}\|_{L^2(\Sigma_N)}\Big).
\end{eqnarray}
By hypothesis, we have 
$ \| u_{meas}-u_{meas,k}\|_{L^2(\Sigma_N)}\to 0$ and $\| p-\hat{p}_k\|_{H^1(\Omega)}\to 0$ for $k\to+\infty$. Hence, invoking Proposition \ref{Proposition:5.1} and observing that 
$ \int_{\Omega} \vert v-v_k\vert^2 \vert \widehat\nabla p\vert^2 \to 0$ for $k\to+\infty$, we deduce $p_k\to p$ in $H^1(\Omega)$.

Next, we have to show that $v$ satisfies the variational inequality \eqref{OC}.
Given $\omega\in\mathcal{K}$, there exists a sequence 
$\hat{\omega}_k\in\mathcal{K}_{h_k}$ such that 
$\hat{\omega}_k\to \omega$ in $H^1(\Omega)$ and a.e. in $\Omega$. Then, from the discrete variational inequality \eqref{OCh:2} we have for $v_k$ that
\begin{eqnarray}\label{eq:eq12}
 &&\int_\Omega(\mathbb{C}_0 - \mathbb{C}_1)(\hat{\omega}_k-v_k) \widehat\nabla u_k : \widehat\nabla p_k
+  2 \widetilde{\alpha} \varepsilon \int_{\Omega}\nabla v_k \cdot \nabla (\hat{\omega}_k-v_k)\nonumber\\
&&\quad+ \frac{\widetilde{\alpha}}{\varepsilon}\int_{\Omega}(1-2v_k)(\hat{\omega}_k-v_k)\geq 0.
\end{eqnarray}
Now, observe that
\begin{eqnarray}\label{eq:eq10}
&&\int_\Omega(\mathbb{C}_0 - \mathbb{C}_1)(\hat{\omega}_k-v_k) \widehat\nabla u_k : \widehat\nabla p_k-
\int_\Omega(\mathbb{C}_0 - \mathbb{C}_1)(\omega-v) \widehat\nabla u : \widehat\nabla p\nonumber\\
&&=
\int_\Omega(\mathbb{C}_0 - \mathbb{C}_1)(\hat{\omega}_k-v_k) 
[\widehat\nabla (u_k-u): \widehat\nabla p_k + \widehat\nabla u: \widehat\nabla (p_k-p)] \nonumber\\
&&+
\int_\Omega(\mathbb{C}_0 - \mathbb{C}_1)[(\hat{\omega}_k-\omega)-(v_k-v)] \widehat\nabla u:\widehat\nabla p.
\end{eqnarray}
The first integral on the right hand side converges to zero by \eqref{eq:uktou}  and $p_k\to p$ in $H^1(\Omega)$. To show that also the second integral converges to zero, we invoke the dominated convergence theorem. 
Hence, from \eqref{eq:eq10}, we obtain
\begin{equation}\label{eq:eq11}
\int_\Omega(\mathbb{C}_0 - \mathbb{C}_1)(\hat{\omega}_k-v_k) \widehat\nabla u_k : \widehat\nabla p_k-
\int_\Omega(\mathbb{C}_0 - \mathbb{C}_1)(\omega-v) \widehat\nabla u : \widehat\nabla p \to 0,
\end{equation}
as $k\to+\infty$. Then, utilizing \eqref{eq:eq11} into \eqref{eq:eq12}, together with the fact that $v_k \rightharpoonup v$ in $H^1(\Omega)$, and the  lower semicontinuity of the norm, we find 
$$ \|\nabla v\|^2_{L^2(\Omega)}\leq 
\liminf_{k\to+\infty}  \| \nabla v_k\|^2_{L^2(\Omega)}.$$
Morover, noticing that $\int_\Omega v_k \hat{\omega}_k\to \int_\Omega v \omega$ for $k\to+\infty$, we get 
\begin{eqnarray}
&&\int_\Omega(\mathbb{C}_0 - \mathbb{C}_1)(\omega-v) \widehat\nabla u : \widehat\nabla p + 2 \widetilde{\alpha} \varepsilon \int_{\Omega}\nabla v \cdot \nabla (\omega-v) \nonumber \\
&&\quad + \frac{\widetilde{\alpha}}{\varepsilon}\int_{\Omega}(1-2v)(\omega-v)\nonumber\\
&&\geq \liminf_{k\to+\infty}\Big\{
\int_\Omega(\mathbb{C}_0 - \mathbb{C}_1)(\hat{\omega}_k-v_k) \widehat\nabla u_k : \widehat\nabla p_k
 \nonumber\\
&&\quad +  2 \widetilde{\alpha} \varepsilon \int_{\Omega}\nabla v_k \cdot \nabla (\hat{\omega}_k-v_k) + \frac{\widetilde{\alpha}}{\varepsilon}\int_{\Omega}(1-2v_k)(\hat{\omega}_k-v_k)
\Big\}\geq 0.
\end{eqnarray}
Finally, it remains to show that $v_k\to v$ strongly in $H^1(\Omega)$. We choose a sequence 
$\hat{v}_k\in\mathcal{K}_{h_k}$ such that $\hat{v}_k\to v$ in $H^1(\Omega)$ and using the discrete variational inequality \eqref{OCh:2} with $\omega_{h_k}=\hat{v}_k$, we easily get $\nabla v_k\to \nabla v$ in $L^2(\Omega)$, implying the result.
\end{proof}


\subsection{Reconstruction Algorithm}
{In order to solve the discrete optimization problem we follow the method used in \cite{BRV2018} and \cite{DEV2016}. The method is based on solving the following parabolic obstacle problem.
For $\delta,\varepsilon>0$ fixed, let $v$ be the solution to 
\begin{eqnarray}
    && \int_\Omega \partial_t v(\omega-v) + J'_{\delta,\varepsilon}(v)[\omega-v]\geq 0, \quad \forall \omega\in\mathcal{K}, t\in(0+\infty),\nonumber\\
    && v(\cdot,0)=v_0\in\mathcal{K}.\nonumber
\end{eqnarray}
An easy computation shows that the value of the objective functional decreases in time. Hence, we expect that if the limit as $t\to+\infty$ of its solution $v(\cdot,t)$ exists and it is equal to the asymptotic state $v_\infty$, then this should satisfy the continuous optimality conditions \eqref{OC}. }\\
We now discretize the above problem by using a semi-implicit time discretization scheme. We denote by $\{ v_h^n\}_{n\in\mathbb{N}}\subset \mathcal{K}_h$ the sequence of approximations $v_h^n\simeq v(\cdot,t^n)$ obtained as follows:
\begin{eqnarray}
&&v_h^0=v_0\in\mathcal{K}_h\nonumber\\
&&v_h^{n+1}\in\mathcal{K}_h:~
\frac{1}{\tau_n}\int_{\Omega}
(v_h^{n+1}-v_h^n)(\omega_h-v_h^{n+1})\nonumber\\
&&~+
\int_\Omega(\mathbb{C}_0 - \mathbb{C}_1)(\omega_h-v^{n+1}_h) \widehat\nabla u^n_h: \widehat\nabla p^n_h
+  2 \widetilde{\alpha} \varepsilon \int_{\Omega}\nabla v^{n+1}_h \cdot \nabla (\omega_h-v^{n+1}_h)\nonumber\\
&&~+ \frac{\widetilde{\alpha}}{\varepsilon}\int_{\Omega}(1-2v^n_h)(\omega_h-v^{n+1}_h)\geq 0, \quad \forall \omega_h\in \mathcal{K}_h,\ n\geq 0,\label{parab_ineq}
\end{eqnarray}
where $\tau_n$ is the time step, and $u_h^n$, $p_h^n\in \mathcal{V}_{h,\Sigma_D}$ are the discrete solutions of the forward problem \eqref{eq:u_h} and adjoint problem \eqref{discradjoint}, respectively, for $v_h=v_h^n$.
We now prove a monotonicity property of the method.
\begin{lemma}\label{lemma:monotonicity}
For each $n \in \mathbb{N}$, there exists a constant $c^n>0$ such that, if $\tau_n\leq (1+c^n)^{-1}$, then 
\begin{equation}
    \|v_h^{n+1}-v_h^n\|^2_{L^2(\Omega)}+J_{\delta,\varepsilon,h}(v_h^{n+1})\leq J_{\delta,\varepsilon,h}(v_h^n), 
\end{equation}
where 
$c^n=c^n(\Omega,\delta,\xi_0,h,\| \mathbb{C}_0 - \mathbb{C}_1\|_{L^\infty(\Omega)}, \| p_h^n\|_{W^{1,\infty}(\Omega)}, \| u_h^n\|_{W^{1,\infty}(\Omega)}).$
\end{lemma}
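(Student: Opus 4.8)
The plan is to test the discrete variational inequality \eqref{parab_ineq} with the admissible competitor $\omega_h=v_h^n\in\mathcal{K}_h$ and then to read off each of the four resulting terms as (an approximation of) the decrement of the corresponding piece of the energy $J_{\delta,\varepsilon,h}$. Writing $\eta:=v_h^{n+1}-v_h^n$, the time term becomes exactly $-\tau_n^{-1}\|\eta\|_{L^2(\Omega)}^2$, while the three remaining terms are the directional derivatives, in the direction $-\eta$, of the misfit functional $\Phi(v):=\tfrac12\|u_h(v)-u_{meas,h}\|_{L^2(\Sigma_N)}^2$, of the Dirichlet term $G(v):=\widetilde{\alpha}\varepsilon\|\nabla v\|_{L^2(\Omega)}^2$ and of the potential term $H(v):=\tfrac{\widetilde{\alpha}}{\varepsilon}\int_\Omega v(1-v)$, evaluated at the states $v_h^n$ or $v_h^{n+1}$ prescribed by the semi-implicit scheme. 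The goal is to show that each of these three contributions is bounded above by the exact energy decrement of the corresponding term, up to a remainder controlled by $\|\eta\|_{L^2(\Omega)}^2$.

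The two \emph{quadratic} contributions are treated exactly. Since $G$ is a convex quadratic and the scheme treats the gradient term implicitly, the Taylor identity
\begin{equation*}
2\widetilde{\alpha}\varepsilon\int_\Omega\nabla v_h^{n+1}\cdot\nabla(v_h^n-v_h^{n+1})=G(v_h^n)-G(v_h^{n+1})-\widetilde{\alpha}\varepsilon\|\nabla\eta\|_{L^2(\Omega)}^2\le G(v_h^n)-G(v_h^{n+1})
\end{equation*}
holds, while, since $H$ is a \emph{concave} quadratic treated explicitly, the analogous expansion gives
\begin{equation*}
\frac{\widetilde{\alpha}}{\varepsilon}\int_\Omega(1-2v_h^n)(v_h^n-v_h^{n+1})=H(v_h^n)-H(v_h^{n+1})-\frac{\widetilde{\alpha}}{\varepsilon}\|\eta\|_{L^2(\Omega)}^2\le H(v_h^n)-H(v_h^{n+1}).
\end{equation*}
Both correction terms are nonpositive and will simply be discarded.

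The main obstacle is the misfit term, because $v\mapsto u_h(v)$ is nonlinear (the coefficient $\mathbb{C}_\delta(v)$ is affine in $v$), so $\Phi$ is neither convex nor quadratic and the scheme evaluates its derivative explicitly through $u_h^n,p_h^n$. I would set $\tilde u:=u_h(v_h^{n+1})-u_h(v_h^n)\in\mathcal{V}_{h,\Sigma_D}$, which solves the discrete analogue of \eqref{difference}, namely $\int_\Omega\mathbb{C}_\delta(v_h^{n+1})\widehat\nabla\tilde u:\widehat\nabla\varphi_h=\int_\Omega(\mathbb{C}_0-\mathbb{C}_1)\eta\,\widehat\nabla u_h^n:\widehat\nabla\varphi_h$. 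Expanding the squared residual as $\langle u_h^n-u_{meas,h},\tilde u\rangle_{L^2(\Sigma_N)}+\tfrac12\|\tilde u\|_{L^2(\Sigma_N)}^2$ and using the discrete adjoint problem \eqref{discradjoint} at $v_h^n$ tested with $\tilde u$, together with the choice $\varphi_h=p_h^n$ above, yields the exact expansion
\begin{equation*}
\Phi(v_h^{n+1})-\Phi(v_h^n)=\int_\Omega(\mathbb{C}_0-\mathbb{C}_1)\eta\,\widehat\nabla u_h^n:\widehat\nabla p_h^n+\int_\Omega(\mathbb{C}_0-\mathbb{C}_1)\eta\,\widehat\nabla\tilde u:\widehat\nabla p_h^n+\tfrac12\|\tilde u\|_{L^2(\Sigma_N)}^2,
\end{equation*}
so that the misfit term of \eqref{parab_ineq} equals $\Phi(v_h^n)-\Phi(v_h^{n+1})$ plus the last two (remainder) integrals. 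The crux is to bound these remainders by $c^n\|\eta\|_{L^2(\Omega)}^2$: from the coercivity of $\mathbb{C}_\delta(v_h^{n+1})$ (with constant $\delta\xi_0$) and the Korn--Poincar\'e inequalities of Proposition \ref{lemma1} one obtains $\|\tilde u\|_{H^1(\Omega)}\le c\,\|\mathbb{C}_0-\mathbb{C}_1\|_{L^\infty(\Omega)}\|u_h^n\|_{W^{1,\infty}(\Omega)}\|\eta\|_{L^2(\Omega)}$; the trace theorem then controls $\tfrac12\|\tilde u\|_{L^2(\Sigma_N)}^2$, while H\"older's inequality together with the bound on $\|p_h^n\|_{W^{1,\infty}(\Omega)}$ controls the mixed integral, both by $c^n\|\eta\|_{L^2(\Omega)}^2$ with $c^n$ depending exactly on $\Omega,\delta,\xi_0,h,\|\mathbb{C}_0-\mathbb{C}_1\|_{L^\infty(\Omega)},\|p_h^n\|_{W^{1,\infty}(\Omega)},\|u_h^n\|_{W^{1,\infty}(\Omega)}$.

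Finally I would assemble the three estimates in \eqref{parab_ineq} with $\omega_h=v_h^n$: discarding the two nonpositive corrections coming from $G$ and $H$ and collecting the remaining bounds gives
\begin{equation*}
\frac{1}{\tau_n}\|\eta\|_{L^2(\Omega)}^2\le\big(J_{\delta,\varepsilon,h}(v_h^n)-J_{\delta,\varepsilon,h}(v_h^{n+1})\big)+c^n\|\eta\|_{L^2(\Omega)}^2,
\end{equation*}
whence $J_{\delta,\varepsilon,h}(v_h^{n+1})-J_{\delta,\varepsilon,h}(v_h^n)\le(c^n-\tau_n^{-1})\|\eta\|_{L^2(\Omega)}^2$. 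Imposing $\tau_n^{-1}\ge 1+c^n$, that is $\tau_n\le(1+c^n)^{-1}$, makes the right-hand side at most $-\|\eta\|_{L^2(\Omega)}^2$, which is exactly the asserted inequality. The only delicate point is the remainder bound of the previous paragraph, which is precisely where the non-quadratic nature of the misfit functional and the $W^{1,\infty}$ (hence mesh-dependent) constants enter.
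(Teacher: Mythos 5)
Your proposal is correct and follows essentially the same route as the paper: test \eqref{parab_ineq} with $\omega_h=v_h^n$, convert the gradient and potential terms into exact energy decrements plus nonpositive quadratic corrections, use the discrete adjoint and forward problems to identify the misfit contribution as $\Phi(v_h^n)-\Phi(v_h^{n+1})$ plus the remainder $\int_\Omega(\mathbb{C}_0-\mathbb{C}_1)\eta\,\widehat\nabla\tilde u:\widehat\nabla p_h^n+\tfrac12\|\tilde u\|_{L^2(\Sigma_N)}^2$ (the paper's $I_1$), bound $\|\tilde u\|_{H^1}\le c^n\|\eta\|_{L^2}$ via coercivity and the $W^{1,\infty}$ bounds, and choose $\tau_n\le(1+c^n)^{-1}$. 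Your organization of the argument by the convexity/concavity of the three energy pieces is a cleaner bookkeeping of the same computation, but the decomposition, the key remainder, and the final estimate coincide with the paper's.
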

\begin{proof}
Choosing $\omega_h=v_h^n$ in \eqref{parab_ineq}, after some simple manipulations we obtain
\begin{eqnarray}
&&\frac{1}{\tau_n} \|v_h^{n+1}-v_h^n\|^2_{L^2(\Omega)}
+\widetilde{\alpha}\varepsilon \|\nabla(v_h^{n+1}-v_h^n)\|^2_{L^2(\Omega)}\nonumber\\
&&+ \frac{\widetilde{\alpha}}{\varepsilon}\|v_h^{n+1}-v_h^n\|^2_{L^2(\Omega)}
+\widetilde{\alpha} \int_{\Omega}\left(\varepsilon
\vert \nabla v_h^{n+1}\vert^2 - \frac{1}{\varepsilon}v_h^{n+1}(1-v_h^{n+1})\right)\nonumber\\
&&-
\widetilde{\alpha} \int_{\Omega}\left(\varepsilon
\vert \nabla v_h^{n}\vert^2 - \frac{1}{\varepsilon}v_h^{n}(1+v_h^{n})\right)\nonumber\\
&&\leq 
\int_\Omega(\mathbb{C}_0 - \mathbb{C}_1)(v_h^n-v^{n+1}_h) \widehat\nabla u_h^n : \widehat\nabla p^n_h.\nonumber
\end{eqnarray}
Adding and subtracting  $\frac{1}{2}\|{u_h^{n+1} - u_{meas,h}}\|_{{L^2(\Sigma_N)}}^2 $ and 
$\frac{1}{2}\|{u_h^n- u_{meas,h}}\|_{{L^2(\Sigma_N)}}^2 $, we get
\begin{eqnarray}
&&\frac{1}{\tau_n} \|v_h^{n+1}-v_h^n\|^2_{L^2(\Omega)}
+\widetilde{\alpha}\varepsilon \|\nabla(v_h^{n+1}-v_h^n)\|^2_{L^2(\Omega)}+ \frac{\widetilde{\alpha}}{\varepsilon}\|v_h^{n+1}-v_h^n\|^2_{L^2(\Omega)}\nonumber\\
&&
+J_{\delta,\varepsilon,h}(v_h^{n+1})-
\frac{1}{2}\|{u_h^{n+1} - u_{meas,h}}\|_{{L^2(\Sigma_N)}}^2
-J_{\delta,\varepsilon,h}(v_h^{n})\nonumber\\
&&+
\frac{1}{2}\|{u_h^n - u_{meas,h}}\|_{{L^2(\Sigma_N)}}^2 \leq 
\int_\Omega(\mathbb{C}_0 - \mathbb{C}_1)(v_h^n-v^{n+1}_h) \widehat\nabla u_h^n : \widehat\nabla p^n_h,\nonumber
\end{eqnarray}
which implies 
\begin{eqnarray}
&&\frac{1}{\tau_n} \|v_h^{n+1}-v_h^n\|^2_{L^2(\Omega)}
+\widetilde{\alpha}\varepsilon \|\nabla(v_h^{n+1}-v_h^n)\|^2_{L^2(\Omega)}+ \frac{\widetilde{\alpha}}{\varepsilon}\|v_h^{n+1}-v_h^n\|^2_{L^2(\Omega)}\nonumber\\
&&
\quad +J_{\delta,\varepsilon,h}(v_h^{n+1})
-J_{\delta,\varepsilon,h}(v_h^{n})\nonumber\\
&&\leq \int_\Omega(v_h^n-v^{n+1}_h)(\mathbb{C}_0 - \mathbb{C}_1) \widehat\nabla u_h^n : \widehat\nabla p^n_h + \frac{1}{2}\|u_h^{n+1} - u_h^n\|_{L^2(\Sigma_N)}^2\nonumber\\
&& \quad +\int_{\Sigma_N} (u_h^{n+1} - u_h^n)\cdot(u_h^n-u_{meas,h})\nonumber\\
&&= \int_\Omega(v_h^n-v^{n+1}_h)(\mathbb{C}_0 - \mathbb{C}_1) \widehat\nabla u_h^n : \widehat\nabla p^n_h + \frac{1}{2}\|u_h^{n+1} - u_h^n\|_{L^2(\Sigma_N)}^2\nonumber\\
&& \quad +\int_\Omega\mathbb{C}_{\delta}(v_h^n)\widehat\nabla p^n_h:
\widehat\nabla (u_h^{n+1}-u_h^n)\nonumber\\
&&=\int_\Omega(\mathbb{C}_{\delta}(v_h^{n+1})-\mathbb{C}_{\delta}(v_h^{n}))\widehat\nabla u^n_h : \widehat\nabla p^n_h + \frac{1}{2}\|u_h^{n+1} - u_h^n\|_{L^2(\Sigma_N)}^2\nonumber\\
&& \quad +\int_\Omega\mathbb{C}_{\delta}(v_h^n)\widehat\nabla p^n_h:
\widehat\nabla (u_h^{n+1}-u_h^n),\label{aux:1}
\end{eqnarray}
where in the last step we employed 
$$ 
\mathbb{C}_{\delta}(v_h^{n+1})-\mathbb{C}_{\delta}(v_h^{n})=(\mathbb{C}_0 - \mathbb{C}_1)(v_h^n-v^{n+1}_h).
$$
It is easy to verify that it holds 
\begin{eqnarray}
&&\int_\Omega(\mathbb{C}_{\delta}(v_h^{n+1})-\mathbb{C}_{\delta}(v_h^{n})) \widehat\nabla u_h^n : \widehat\nabla p^n_h + \frac{1}{2}\|u_h^{n+1} - u_h^n\|_{L^2(\Sigma_N)}^2\nonumber\\
&& \quad +\int_\Omega\mathbb{C}_{\delta}(v_h^n)\widehat\nabla p^n_h:
\widehat\nabla (u_h^{n+1}-u_h^n)\nonumber\\
&&=\int_\Omega(\mathbb{C}_{\delta}(v_h^{n})-\mathbb{C}_{\delta}(v_h^{n+1})) \widehat\nabla(  u_h^{n+1}- u_h^n) : \widehat\nabla p^n_h + \frac{1}{2}\|u_h^{n+1} - u_h^n\|_{L^2(\Sigma_N)}^2\nonumber\\
&&+ \int_\Omega \mathbb{C}_{\delta}(v_h^{n+1})
\widehat\nabla u_h^{n+1}: \widehat\nabla p^n_h -
\int_\Omega \mathbb{C}_{\delta}(v_h^{n})
\widehat\nabla  u_h^n: \widehat\nabla p^n_h\nonumber\\
&&=\int_\Omega(\mathbb{C}_{\delta}(v_h^{n})-\mathbb{C}_{\delta}(v_h^{n+1})) \widehat\nabla(  u_h^{n+1}- u_h^n) : \widehat\nabla p^n_h + \frac{1}{2}\|u_h^{n+1} - u_h^n\|_{L^2(\Sigma_N)}^2\nonumber\\
&&=: I_1,\nonumber
\end{eqnarray}
where the last step follows from the definition of the discrete adjoint problem.

Then, using the Cauchy-Schwarz inequality, the trace theorem and the fact that in finite dimensional spaces all norms are equivalent, we have
\begin{eqnarray}
\vert I_1\vert &\leq& c_0^n \|\mathbb{C}_1 - \mathbb{C}_0\|_{L^\infty(\Omega)} \| \widehat\nabla p_h^n\|_{L^\infty(\Omega)} 
\| v_h^n - v_h^{n+1}\|_{L^2(\Omega)}
\|\widehat\nabla(u_h^{n+1}-u_h^n) \|_{L^2(\Omega)} \nonumber\\
&&+ \frac{1}{2}\|u_h^{n+1} - u_h^n\|_{L^2(\Sigma_N)}^2\nonumber\\
&\leq& c_1^{n} \| v_h^n - v_h^{n+1}\|_{L^2(\Omega)} \|u_h^{n+1}-u_h^n\|_{H^1(\Omega)}+ \frac{c_2^n}{2}\|u_h^{n+1} - u_h^n\|_{H^1(\Omega)}^2\label{eq:intermediate}
\end{eqnarray}
where $c_0^n=c_0^n(\Omega, h)$, $c_1^n=c^n_1(\|\mathbb{C}_1 - \mathbb{C}_0\|_{L^\infty(\Omega)}, \| \widehat\nabla p_h^n\|_{L^\infty(\Omega)}, \Omega,h)$ and $c_2^n$ is the constant in the trace theorem.

In the sequel we bound $\|u_h^{n+1} - u_h^n\|_{H^1(\Omega)}$ by means of the term  
$\| v_h^n - v_h^{n+1}\|_{L^2(\Omega)}$.
To this aim,  we subtract  the equations for $u_h^{n+1}$ and $u_h^n$ (cf. \eqref{eq:u_h}) and employ $\varphi=u_h^{n+1}- u_h^n$ as a test function. A standard manipulation yields 
\begin{equation}\label{aux_bound}
    \|u_h^{n+1} - u_h^n\|_{H^1(\Omega)} \leq c_3^n \| v_h^n - v_h^{n+1}\|_{L^2(\Omega)},
\end{equation}
with $c_3^n=c^n_3(\Omega,\delta,\xi_0,h,\|\mathbb{C}_1 - \mathbb{C}_0\|_{L^\infty(\Omega)}, \| \widehat\nabla p_h^n\|_{L^\infty(\Omega)})$.
Employing \eqref{aux_bound} into \eqref{eq:intermediate}, we obtain 
\begin{equation}\label{bound:I1}
    \vert I_1 \vert \leq c_4^n \| v_h^{n+1} - v_h^n\|^2_{L^2(\Omega)},
\end{equation}
where 
$c_4^n=c^n_4(\Omega,\delta,\xi_0,h,\|(\mathbb{C}_1 - \mathbb{C}_0\|_{L^\infty(\Omega)}, \| \widehat\nabla p_h^n\|_{L^\infty(\Omega)},c_2^n)$.

Using \eqref{bound:I1} into \eqref{aux:1}, we deduce
\begin{eqnarray}
&&\frac{1}{\tau_n} \|v_h^{n+1}-v_h^n\|^2_{L^2(\Omega)}
+\widetilde{\alpha}\varepsilon \|\nabla(v_h^{n+1}-v_h^n)\|^2_{L^2(\Omega)}+ \frac{\widetilde{\alpha}}{\varepsilon}\|v_h^{n+1}-v_h^n\|^2_{L^2(\Omega)}\nonumber\\
&&\quad + J_{\delta,\varepsilon,h}(v_h^{n+1}) \leq J_{\delta,\varepsilon,h}(v_h^{n}) +
 c_4^n \| v_h^{n+1} - v_h^n\|^2_{L^2(\Omega)}.
\end{eqnarray}
Now, since 
\begin{eqnarray}
&&\frac{1}{\tau_n} \|v_h^{n+1}-v_h^n\|^2_{L^2(\Omega)}
+\widetilde{\alpha}\varepsilon \|\nabla(v_h^{n+1}-v_h^n)\|^2_{L^2(\Omega)}+ \frac{\widetilde{\alpha}}{\varepsilon}\|v_h^{n+1}-v_h^n\|^2_{L^2(\Omega)}\nonumber\\
&&\quad \geq 
 \frac{1}{\tau_n} \| v_h^{n+1} - v_h^n\|^2_{L^2(\Omega)},
\end{eqnarray}
we get 
\begin{equation}
    \left(\frac{1}{\tau_n}-c_4^n\right)
    \| v_h^{n+1} -v_h^n\|^2_{L^2(\Omega)} +
    J_{\delta,\varepsilon,h}(v_h^{n+1})\leq J_{\delta,\varepsilon,h}(v_h^{n}).
\end{equation}
Finally, choosing $\tau_n \leq \frac{1}{1+c_4^n}$, the assertion of the lemma follows, just setting $c^n:=c^n_4$.
\end{proof}
We are now ready to state a convergence result for our numerical scheme.
\begin{theorem}
Let $v_h^0\in\mathcal{K}_h$ be an initial guess. Then there exists a collection of timesteps $\tau_n$ such that $0< \gamma \leq \tau_n \leq (1+c^n)^{-1} $, $\forall n>0$, where $c^n$ is the constant appearing in Lemma \ref{lemma:monotonicity}, and $\gamma$ depends on the data and possibly on $h$. The corresponding sequence $v_h^n$ generated by \eqref{parab_ineq} has a convergence subsequence (still denoted by $v_h^n$) in $W^{1,\infty}$ such that 
$$v_h^n\to v_h, \qquad n\to+\infty, $$
where $v_h\in\mathcal{K}_h$ satisfies the discrete optimality condition 
$$J'_{\delta,\varepsilon,h}(v_h)[\omega_h-v_h]\geq 0,\quad \forall \omega_h\in \mathcal{K}_h .$$
\end{theorem}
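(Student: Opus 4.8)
The plan is to treat the discrete energy $J_{\delta,\varepsilon,h}$ as a Lyapunov functional along the iteration \eqref{parab_ineq}. Lemma \ref{lemma:monotonicity} makes this energy nonincreasing, which simultaneously forces the increments $v_h^{n+1}-v_h^n$ to vanish and confines the iterates to a fixed bounded subset of the finite-dimensional space $\mathcal{V}_h$; a compactness argument then produces a convergent subsequence whose limit I identify as a solution of the discrete optimality condition by passing to the limit in \eqref{parab_ineq}.

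First I would secure a timestep lower bound $\gamma>0$ that is uniform in $n$. Since $\mathbb{C}_\delta(v)$ is uniformly strongly convex for every $v$ with $0\le v\le 1$ (with coercivity constant depending on $\delta,\xi_0$ but not on the particular iterate), testing the forward problem \eqref{eq:u_h} with $\varphi_h=u_h^n$ and the adjoint problem \eqref{discradjoint} with $\psi_h=p_h^n$ yields
\begin{equation*}
\|u_h^n\|_{H^1(\Omega)}\le c\,\|g_h\|_{L^2(\Sigma_N)},\qquad \|p_h^n\|_{H^1(\Omega)}\le c\,\|u_h^n-u_{meas,h}\|_{L^2(\Sigma_N)}\le c,
\end{equation*}
with $c$ independent of $n$. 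For fixed $h$ all norms on $\mathcal{V}_{h,\Sigma_D}$ are equivalent, so $\|u_h^n\|_{W^{1,\infty}(\Omega)}$ and $\|p_h^n\|_{W^{1,\infty}(\Omega)}$ are bounded uniformly in $n$; as $c^n$ depends monotonically on exactly these quantities (cf. Lemma \ref{lemma:monotonicity}), there is a constant $C=C(h)$ with $c^n\le C$ for all $n$. Choosing $\gamma:=(1+C)^{-1}$ then guarantees $0<\gamma\le (1+c^n)^{-1}$, so every $\tau_n\in[\gamma,(1+c^n)^{-1}]$ is admissible. This is the delicate step, and the main obstacle: without the uniform strong convexity of $\mathbb{C}_\delta(v_h^n)$ one could not exclude $c^n\to\infty$, in which case the admissible interval might collapse and the energy-decrease mechanism would break down.

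With such timesteps, Lemma \ref{lemma:monotonicity} applies at every step, and I would sum the resulting inequality over $n$. Using $J_{\delta,\varepsilon,h}\ge 0$ (each of its terms is nonnegative for $0\le v\le 1$), the telescoping sum gives
\begin{equation*}
\sum_{n=0}^{\infty}\|v_h^{n+1}-v_h^n\|^2_{L^2(\Omega)}\le J_{\delta,\varepsilon,h}(v_h^0)<\infty,
\end{equation*}
whence $\|v_h^{n+1}-v_h^n\|_{L^2(\Omega)}\to 0$. Moreover $J_{\delta,\varepsilon,h}(v_h^n)\le J_{\delta,\varepsilon,h}(v_h^0)$ for every $n$, which together with $0\le v_h^n\le 1$ bounds $\|\nabla v_h^n\|_{L^2(\Omega)}$, hence $\|v_h^n\|_{H^1(\Omega)}$, uniformly. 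By equivalence of norms on $\mathcal{V}_h$ the sequence is bounded in $W^{1,\infty}(\Omega)$, so a subsequence $v_h^{n_k}$ converges in $W^{1,\infty}(\Omega)$ to some $v_h$, and since $\mathcal{K}_h$ is closed and convex, $v_h\in\mathcal{K}_h$.

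It remains to pass to the limit in \eqref{parab_ineq} along $n_k$. Because the increments vanish in $L^2$ and all norms are equivalent on $\mathcal{V}_h$, the shifted sequence $v_h^{n_k+1}$ converges to the same limit $v_h$; continuity of the discrete solution maps $v\mapsto u_h(v)$ and $v\mapsto p_h(v)$ (from well-posedness of \eqref{eq:u_h} and \eqref{discradjoint} in finite dimensions) then gives $u_h^{n_k}\to u_h(v_h)$ and $p_h^{n_k}\to p_h(v_h)$. The discrete time-derivative term is controlled by
\begin{equation*}
\frac{1}{\tau_{n_k}}\Big|\int_\Omega (v_h^{n_k+1}-v_h^{n_k})(\omega_h-v_h^{n_k+1})\Big|\le \frac{1}{\gamma}\,\|v_h^{n_k+1}-v_h^{n_k}\|_{L^2(\Omega)}\,\|\omega_h-v_h^{n_k+1}\|_{L^2(\Omega)}\to 0,
\end{equation*}
using $\tau_{n_k}\ge\gamma>0$, while all remaining terms converge to their natural limits. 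Letting $k\to\infty$ therefore yields $J'_{\delta,\varepsilon,h}(v_h)[\omega_h-v_h]\ge 0$ for all $\omega_h\in\mathcal{K}_h$, which is the claimed discrete optimality condition.
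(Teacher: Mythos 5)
Your proposal is correct and follows essentially the same route as the paper: monotonicity from Lemma \ref{lemma:monotonicity} gives the telescoping bound $\sum_n\|v_h^{n+1}-v_h^n\|^2_{L^2}\le J_{\delta,\varepsilon,h}(v_h^0)$ and a uniform energy bound, finite-dimensionality upgrades the resulting $H^1$ bounds on $v_h^n,u_h^n,p_h^n$ to $W^{1,\infty}$ (which also bounds $c^n$ and yields the uniform $\gamma$), and the limit passage in \eqref{parab_ineq} uses $\tau_n\ge\gamma$ to kill the discrete time-derivative term. The only cosmetic difference is that you establish the uniform bound on $c^n$ before invoking the lemma, whereas the paper derives it afterwards; your ordering is arguably the cleaner presentation of the same argument.
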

\begin{proof}
Consider a collection of timesteps bounded by $(1+c^n)^{-1}$, for all $n>0$. Employing Lemma \ref{lemma:monotonicity}, we have 
\begin{eqnarray}
&&\sum_{n=0}^{+\infty} \|v_h^n-v_h^{n+1}\|^2_{L^2(\Omega)}
\leq J_{\delta,\varepsilon,h}(v_h^0),\label{aux:conv:1}\\
&&\sup_{n\in\mathbb{N}} J_{\delta,\varepsilon,h}(v_h^n)\leq J_{\delta,\varepsilon,h}(v_h^0).\label{aux:conv:2}
\end{eqnarray}
Hence, the sequence $v_h^n$ is bounded in $H^1_0(\Omega)$ and it holds 

\begin{equation}\label{aux:conv:3}
    \lim_{n\to +\infty} \|v_h^n-v_h^{n+1}\|^2_{L^2(\Omega)} =0.
\end{equation} 
From the weak formulation of the forward and adjoint problems, the previous relations give that $u_h^n$ and $p_h^n$ are bounded in $H^1(\Omega)$, hence in $W^{1,\infty}(\Omega)$ as we are in finite dimensional spaces. Therefore, thanks to the definition of the constant 
$c^n$, reported in the last part of the proof of Lemma \ref{lemma:monotonicity}, this gives that there exists a constant $M>0$ such that $c^n<M$, and equivalently there exists a positive constant $\gamma>0$, independent of $n$, such that $\gamma\leq (1+c^n)^{-1}$. Hence, there exists a subsequence of 
$(v_h^n,u_h^n,p_h^n)$ (still denoted by the same symbol) such that 
$$ (v_h^n,u_h^n,p_h^n) \to (v_h,u_h,p_h)\quad\text{in~}W^{1,\infty}(\Omega),$$
and in particular 
$$u_h^n\to u_h~\text{a.e.~in~}\Omega, \qquad p_h^n\to p_h~\text{a.e.~in~}\Omega. $$
Hence, $u_h$ is the solution of the discrete forward problem and $p_h$ is the solution of the discrete adjoint problem. 
Finally, from \eqref{parab_ineq} and $\tau_n \geq \gamma$ we get 
\begin{eqnarray}
&&\int_\Omega(\mathbb{C}_0 - \mathbb{C}_1)(\omega_h-v^{n+1}_h) \widehat\nabla u^n_h : \widehat\nabla p^n_h
+  2 \widetilde{\alpha} \varepsilon \int_{\Omega}\widehat\nabla v^{n+1}_h \cdot \widehat\nabla (\omega_h-v^{n+1}_h)\nonumber\\
&&~+ \frac{\widetilde{\alpha}}{\varepsilon}\int_{\Omega}(1-2v^n_h)(\omega_h-v^{n+1}_h) \geq -\frac{C}{\gamma} \|v_h^{n+1}-v_h^n\|_{L^2(\Omega)}
\| \omega_h -v_h^{n+1}\|_{L^2(\Omega)}.\nonumber
\end{eqnarray}
From \eqref{aux:conv:3} and recalling that $v_h^n\to v_h$, we deduce that $v_h$ satisfies the discrete optimality condition \eqref{OCh}.
\end{proof}

\section{Numerical Examples}\label{sec:numerical examples}
In this section we show the numerical results which are obtained from an application of the Primal Dual Active Set Method (PDASM) to the variational inequality \eqref{parab_ineq}. This method has been presented in \cite{HintItoKun03} and later applied for the detection of conductivity inclusions in \cite{DEV2016} and \cite{BRV2018} for a linear and a semilinear elliptic equation, respectively. Primal dual active set methods represent a very good choice in engineering
applications  due to their effectiveness and robustness (cf., e.g., \cite{HePen19}).
Here, we show that choosing the parameter $\delta$ sufficiently small we are able to reconstruct elastic cavities of different shapes. Given a tolerance $\textrm{tol}>0$, the reconstruction algorithm is based on the following steps. 
\begin{algorithm}[H]
\caption{Discrete Parabolic Obstacle Problem}
\label{al:algorithm}
\begin{algorithmic}
\State{Set $n = 0$ and $v_h^0 = v_0$, the initial guess for the inclusion \; }
\While{$\|{v^n_h-v^{n-1}_h}\|>\textrm{tol}$}
\State{find solution of the forward problem \eqref{eq:u_h} with $v = v_h^n$\;}
\State{find solution of the adjoint problem \eqref{discradjoint} with $v = v_h^n$\;}
\State{find $v_h^{n+1}$ solving \eqref{parab_ineq} via PDASM algorithm \;}
\State{update $n = n+1$;}
\EndWhile
\end{algorithmic}
\end{algorithm}
In the implementation of Algorithm \ref{al:algorithm}, the numerical experiments are performed for $d=2$ in the domain $\Omega=(-1,1)^2$, using a triangular tessellation $\mathcal{T}_h$ of $\Omega$. As boundary measurements, we use synthetic data. They are generated by solving via the Finite Element method the forward problem \eqref{prob:cavity}, with boundary conditions prescribed as in Figure \ref{fig:bound_cond} on the square, with one or more cavities of given geometries. We use a tessellation $\mathcal{T}^{ref}_h$ which is more refined than $\mathcal{T}_h$ on the common part outside the cavities (see Figure \ref{fig:meshes} for an example of the two tessellations) in order not to commit inverse crime. Once extracting the values of the solution of the forward problem on the boundary of the domain $\Omega$ obtained by the mesh $\mathcal{T}^{ref}_h$, we interpolate these values on the mesh $\mathcal{T}_h$.  
\begin{figure}[h!]
    \centering
    \begin{subfigure}{0.4\textwidth}
    \centering
    \includegraphics[width=\textwidth]{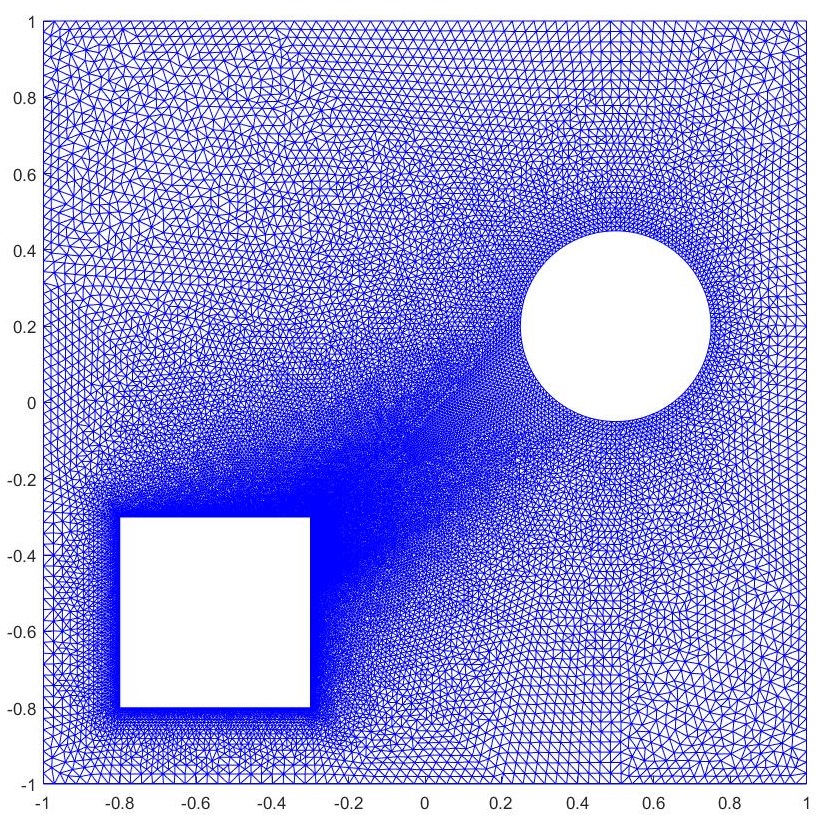}
    \caption{Mesh $\mathcal{T}^{ref}_h$: forward problem.}
    \label{fig:mesh_cavity}
    \end{subfigure}
    \hfill
    \begin{subfigure}{0.4\textwidth}
    \centering
    \includegraphics[width=\textwidth]{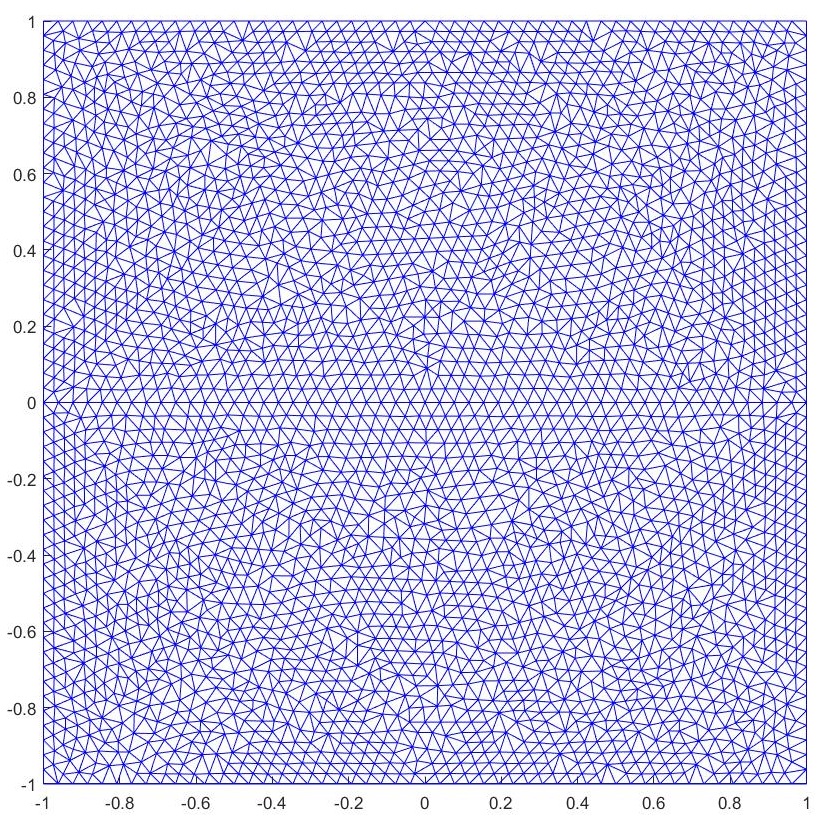}
    \caption{Mesh $\mathcal{T}_h$: inverse problem.}
    \label{fig:mesh_inverse}
    \end{subfigure}
\caption{Example of the meshes adopted.}
\label{fig:meshes}    
\end{figure}
Therefore, by $u_{meas}$ we denote the resulting boundary datum on the mesh $\mathcal{T}_h$. 
We also mention that the triangular mesh is adaptively refined during the reconstruction procedure using the values of $\nabla v_h$ after an a-priori fixed number of iterations which depend on the specific numerical example. See, as example, Figure \ref{fig:adapt_mesh} related to the reconstruction of a circular cavity.
\begin{figure}[h!]
    \centering
    \begin{subfigure}{0.4\textwidth}
    \includegraphics[width=\textwidth]{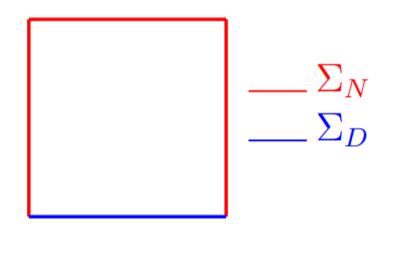}
    \caption{Boundary condition in numerical experiments: Neumann boundary conditions are assigned on the red part. Homogeneous Dirichlet conditions are assigned on the blue part. }
    \label{fig:bound_cond}
    \end{subfigure}
    \hfill
    \begin{subfigure}{0.4\textwidth}
    \centering
    \includegraphics[width=\textwidth]{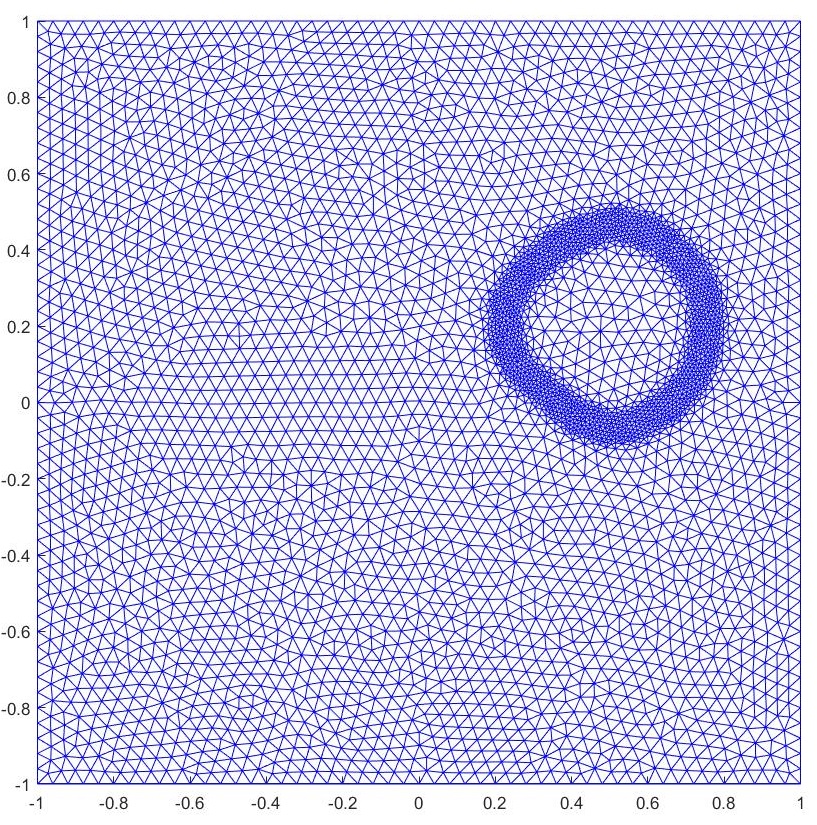}
    \caption{Refinement of the mesh around the reconstructed domain. This is the mesh at the final iteration of the experiment in Figure \ref{fig:test2b}.}
    \label{fig:adapt_mesh}
    \end{subfigure}
    \caption{Geometrical setting and refinement of the mesh.}
    \label{fig:geom_square}
\end{figure}

In the reconstruction procedure, i.e. for the implementation of the Algorithm \ref{al:algorithm}, we assume to know two different boundary measurements. In fact, in the context of inverse boundary value problems of this kind, it is reasonable to use $N_g>1$ different boundary measurements $u^{i}_{meas}$, for $i=1,\ldots,N_g$ which clearly improve the numerical reconstruction results. Thus, we consider a slight modification of the original optimization problem \eqref{minrel}, assuming the knowledge of $N_g$ different Neumann boundary data $g^i$, for $i=1,\ldots,N_g$ and hence considering 
\begin{equation}\label{eq:min_prob_more_meas}
\begin{aligned}
\min_{v \in \mathcal{K}} J^{sum}_{\delta,\varepsilon}(v),& \,\,\, \\ &\hspace{-2.1cm} J^{sum}_{\delta,\varepsilon}(v) := \frac{1}{N_g}\sum_{i=1}^{N_g}\left( \frac12 \|u^{i}_{\delta}(v)-u^{i}_{meas}\|_{L^2(\Sigma_N)}^2\right)
+ \widetilde{\alpha} \!\int_{\Omega}\Big( \varepsilon|\nabla v|^2 + \frac{1}{\varepsilon}v(1-v)\Big),
\end{aligned}
\end{equation}
where $u^{i}_{\delta}(v)\in H^1_{\Sigma_D}(\Omega)$ is the solution to \eqref{elasticity} with $g=g^{i}$ and for $v\in \mathcal{K}$. The necessary optimality condition related to  \eqref{eq:min_prob_more_meas} can be equivalently obtained reasoning similarly as we did to derive \eqref{VI}.

In Table \ref{tab}, we collect some of the parameters utilized in most numerical tests. Possible changes in these values are highlighted in the text related to each specific experiment. 

Finally, all the numerical experiments are performed choosing, as initial guess, the phase-field variable $v_0=0$.

\begin{table}[h!]
{\begin{center}
 \begin{tabular}{|c | c | c | c | c|} 
 \hline
 \centering
 tol & $\widetilde{\alpha}$ & $\tau_n$ & $\varepsilon$ & $\delta$  \\ [0.5ex] 
 \hline\hline
 $10^{-5}$ & $10^{-2}$ & $10^{-3}$ & $\frac{1}{16\pi}$ or $\frac{1}{8\pi}$  & $10^{-2}$ \\ [1ex]
 \hline
 \end{tabular}
 \captionof{table}{Values of some parameters utilized in Algorithm \ref{al:algorithm}.}\label{tab}
\end{center}
}
\end{table}

\subsection{Numerical experiments with $N_g=2$ and without noise in the measurements.}

\textbf{Test 1: reconstruction of a circular cavity.} The elastic medium is described by the Lam\'e parameters $\mu=0.2$ and $\lambda=1$. The Neumann boundary conditions are $g^1(x,y)=(0, \frac{1}{10}-\frac{3}{10}y)$ and $g^2(x,y)=(-\frac{1}{2}x^2, y^2)$. We set the parameter $\varepsilon=\frac{1}{16\pi}$. 
The mesh is refined with respect to the gradient of the phase-field variable every $1000$ iterations. The algorithm stops after $n=3544$ iterations. In Figure \ref{fig:Test1} we show the numerical results at three different time steps. 
\begin{figure}[h!]
    \centering
    \begin{subfigure}{0.4\textwidth}
    \centering
    \includegraphics[width=\textwidth]{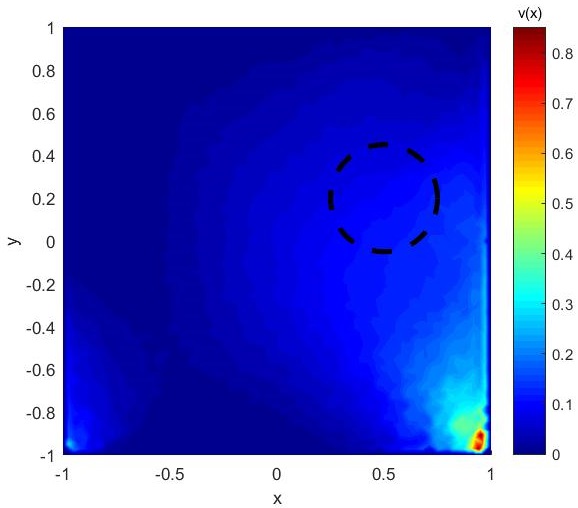}
    \caption{At $n=20$}
    \label{fig:test1}
    \end{subfigure}
    \centering
    \begin{subfigure}{0.4\textwidth}
    \centering
    \includegraphics[width=\textwidth]{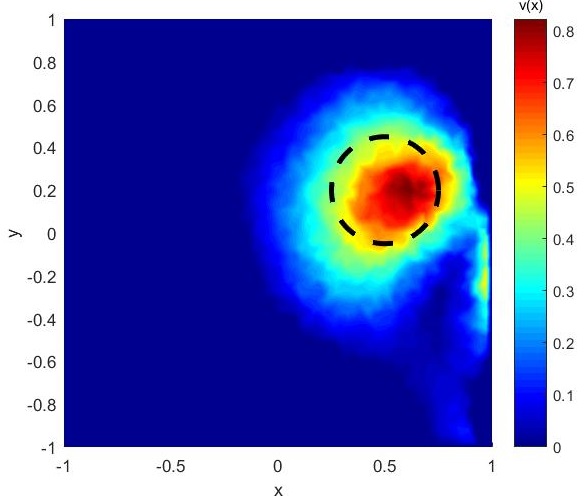}
    \caption{At $n=200$}
    \label{fig:test1_200}
    \end{subfigure}
    \hfill \\
    \begin{subfigure}{0.3\textwidth}
    \centering
    \includegraphics[width=\textwidth]{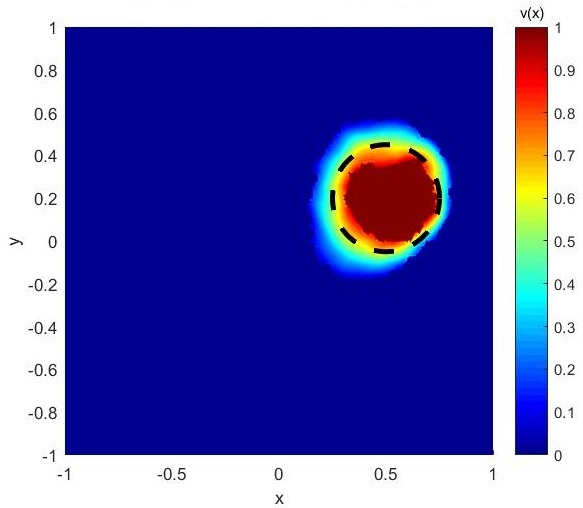}
    \caption{At $n=1000$}
    \label{fig:test1a}
    \end{subfigure}
    \hfill
    \begin{subfigure}{0.3\textwidth}
    \centering
    \includegraphics[width=\textwidth]{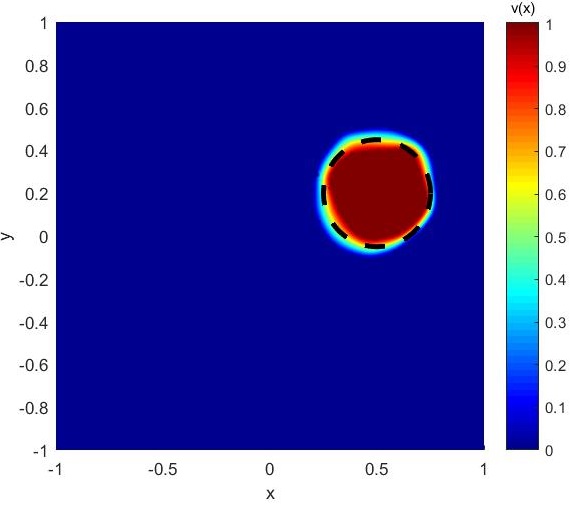}
    \caption{At $n=2000$}
    \label{fig:test1b}
    \end{subfigure}
    \hfill
    \begin{subfigure}{0.3\textwidth}
    \centering
    \includegraphics[width=\textwidth]{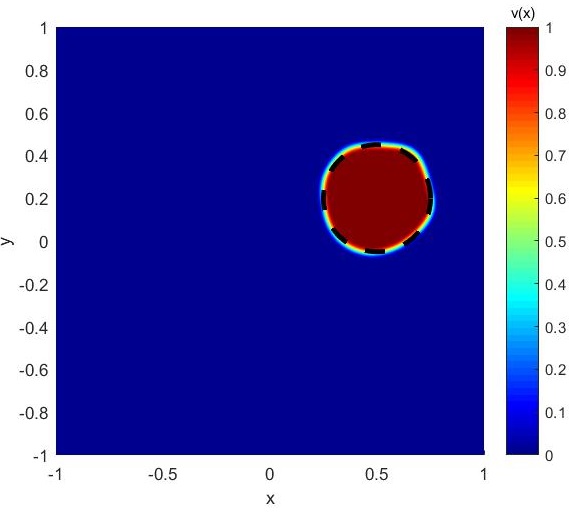}
    \caption{$n=3544$, final step}
    \label{fig:test1c}
    \end{subfigure}
\caption{Test 1. Reconstruction of a circular cavity. Dotted line represents the target cavity.}
\label{fig:Test1}    
\end{figure}

\textbf{Test 2: reconstruction of a circular cavity - changing boundary conditions and Lam\'e parameters.} We propose the same numerical experiments of Test 1, showing how the results change using different Neumann boundary conditions and Lam\'e parameters. We report in the captions of Figure \ref{fig:Test2} the selected parameters, data, and also the number of time steps needed for reaching the tolerance. Note that the three experiments consider different values for the Poisson coefficient $\nu:=\frac{\lambda}{2(\lambda+\mu)}$, that is $\nu=\frac{1}{4}$, $\nu=\frac{1}{3}$, and $\nu=-\frac{1}{18}$, respectively. In the three numerical examples of Figure \ref{fig:Test2}, the refinement of the mesh happens every $1500$, $1000$, $2000$ iterations, respectively. 

\begin{figure}[h!]
    \centering
    \begin{subfigure}{0.3\textwidth}
    \centering
    \includegraphics[width=\textwidth]{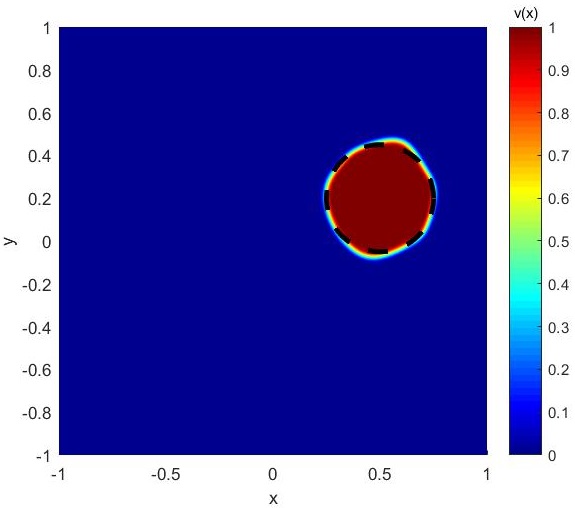}
    \caption{$ (\mu,\lambda)=(1,1);\vspace{0.1cm} \\ g^1(x,y)=(2,0);\vspace{0.1cm} \\ g^2(x,y)=(-\frac{1}{2}x^2,y^2);\vspace{0.1cm} \\ n=4308.$}
    \label{fig:test2a}
    \end{subfigure}
    \hfill
    \begin{subfigure}{0.3\textwidth}
    \centering
    \includegraphics[width=\textwidth]{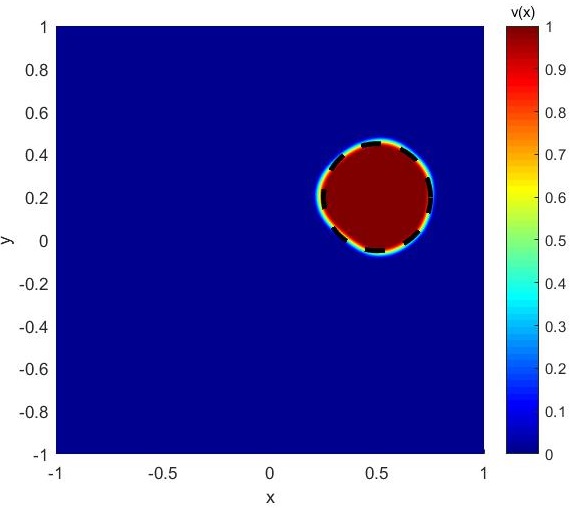}
    \caption{
    $ (\mu,\lambda)=(0.5,1);\vspace{0.1cm} \\ g^1(x,y)=(x,y);\vspace{0.1cm} \\ g^2(x,y)=(-y,-x);\vspace{0.1cm} \\ n=5375.$}
    \label{fig:test2b}
    \end{subfigure}
    \hfill
    \begin{subfigure}{0.3\textwidth}
    \centering
    \includegraphics[width=\textwidth]{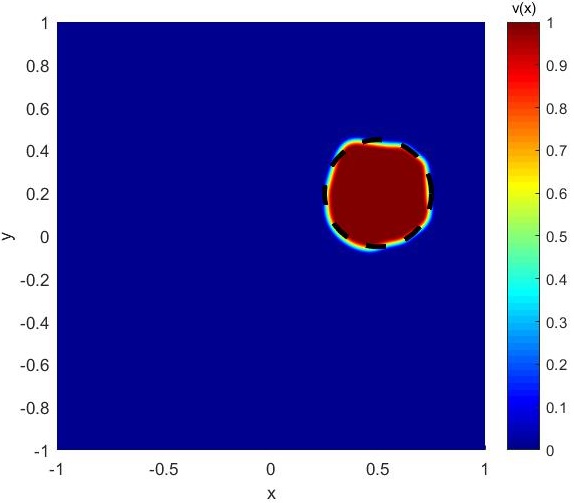}
    \caption{$ (\mu,\lambda)=(2,-0.2);\vspace{0.1cm} \\  g^1(x,y)=(5x,4y);\vspace{0.1cm} \\ g^2(x,y)=(-3y,-3x);\vspace{0.1cm} \\ n=3362.$}
    \label{fig:test2c}
    \end{subfigure}
\caption{Test 2. Reconstruction of a circular cavity using several parameters and data. For each experiment, we report the configuration at the final step $n$. Dotted line represents the target cavity.}
\label{fig:Test2}    
\end{figure}

\textbf{Test 3: reconstruction of a Lipschitz domain.} This experiment aims at reconstructing a square-shaped cavity. We show several numerical tests, choosing different values for $\varepsilon$, different boundary conditions and different values of the number of iterations for the refinement of the mesh. We have already shown results based on different choices for the values of the Lam\'e parameters in the previous numerical tests, so we fix the values of Lam\'e coefficients to be $\mu=0.5$ and $\lambda=1$. In fact, recalling that the range of the Poisson coefficient is $-1<\nu<\frac{1}{2}$ ($\nu=\frac{1}{2}$ represents the incompressible case), we have considered four relevant cases for the Poisson coefficient: one test on an elastic material close to incompressible case ($\nu=\frac{5}{12}$ in Figure \ref{fig:test1c}), two tests on elastic coefficients of common materials ($\nu=\frac{1}{4}$ and $\nu=\frac{1}{3}$ in Figures \ref{fig:test2a} and \ref{fig:test2b}, respectively), and one test on auxetic materials, that is materials with negative Poisson ratio ($\nu=-\frac{1}{18}$ in Figure \ref{fig:test2c}). In the results of Figure \ref{fig:Test3}, the refinement of the mesh happens every 6000 for the first two experiments and every 3000 iterations for the last one. The second numerical result, see Figure \ref{fig:test3b}, has the same parameters of the numerical example of Figure \ref{fig:test3a} except $\widetilde{\alpha}$ which is chosen $\widetilde{\alpha}=5\times 10^{-2}$.

\begin{figure}[h!]
    \centering
    \begin{subfigure}{0.3\textwidth}
    \centering
    \includegraphics[width=\textwidth]{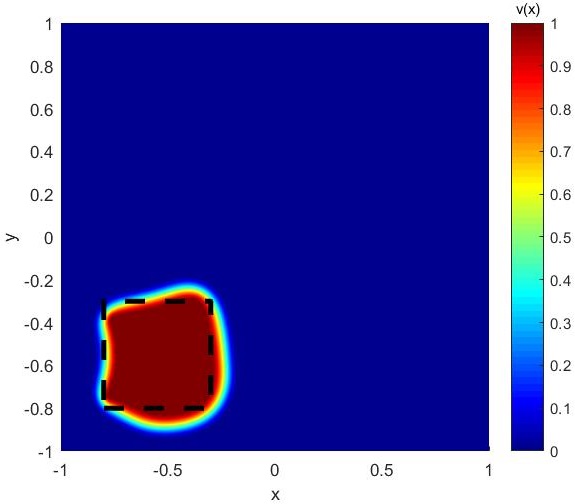}
    \caption{$ \varepsilon=\frac{1}{8\pi};\vspace{0.1cm} \\ g^1(x,y)=(\frac{1}{10},0);\vspace{0.1cm} \\ g^2(x,y)=(-\frac{1}{2}x^2,y^2);\vspace{0.1cm} \\ n=7957.$}
    \label{fig:test3a}
    \end{subfigure}
    \hfill
    \begin{subfigure}{0.3\textwidth}
    \centering
    \includegraphics[width=\textwidth]{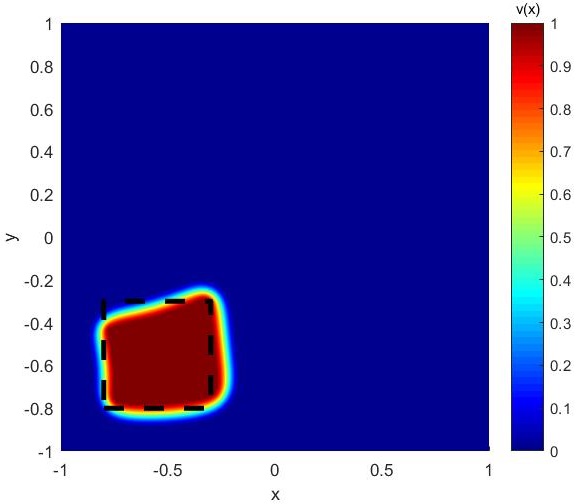}
    \caption{$ \varepsilon=\frac{1}{8\pi}; \widetilde{\alpha}=5\times10^{-2};\vspace{0.1cm} \\ g^1(x,y)=(0.1,0);\vspace{0.1cm} \\ g^2(x,y)=(-0.5x^2,y^2);\vspace{0.1cm} \\ n=16346.$}
    \label{fig:test3b}
    \end{subfigure}
    \hfill
    \begin{subfigure}{0.3\textwidth}
    \centering
    \includegraphics[width=\textwidth]{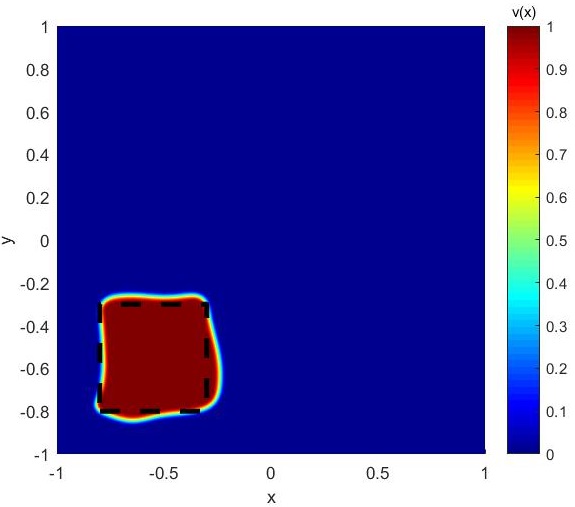}
    \caption{$ \varepsilon=\frac{1}{16\pi}; \vspace{0.1cm}\\  g^1(x,y)=(0,\frac{2}{5}x-\frac{3}{10}y);\vspace{0.1cm} \\ g^2(x,y)=(-\frac{1}{2}x^2,y^2);\vspace{0.1cm} \\ n=10931.$}
    \label{fig:test3c}
    \end{subfigure}
\caption{Test 3. Reconstruction of a square-shaped cavity. Dotted line represents the target cavity.}
\label{fig:Test3}    
\end{figure}

\textbf{Test 4: reconstruction of two cavities.} This test provides results when the two cavities to be reconstructed are a square and a circle. Neumann boundary conditions are given by $g^1(x,y)=(x,y)$ and $g^2(x,y)=(-y,-x)$. We propose two numerical reconstruction procedures, see Figure \ref{fig:Test4}. In Figure \ref{fig:test4a}, we report the results obtained by the standard algorithm, while in Figure \ref{fig:test4c} we use a variant of the Algorithm \ref{al:algorithm} where the parameter $\varepsilon$ is initially set $\varepsilon=\frac{1}{4\pi}$ but after a fixed and a-priori chosen number of iterations (8000 iterations) is updated and set $\varepsilon=\varepsilon/4$. In both cases the mesh is refined after 5000 iterations. It is worth noting that the variant of Algorithm \ref{al:algorithm} does not produce the visible oscillations of the test in Figure \ref{fig:test4a}.\\ 
Note that we also change a little bit the value of $\delta$. We have observed that $\delta$ cannot be chosen too small otherwise numerical instability can appear. Numerically we have seen that, in order to overcome this issue, $\tau_n$ has to be chosen always smaller than $\delta$. However, choosing $\tau$ too small increases the number of necessary iterations to satisfy the stopping criterium.    
\begin{figure}[h!]
    \centering
    \begin{subfigure}{0.4\textwidth}
    \centering
    \includegraphics[width=\textwidth]{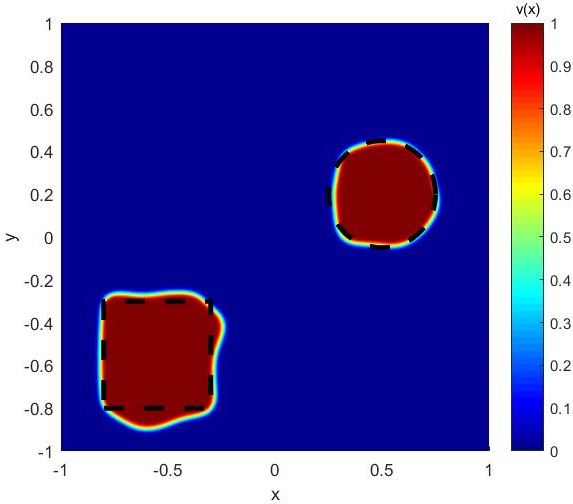}
    \caption{$ \varepsilon=\frac{1}{16\pi};\  \delta=10^{-2}; n=8531$.\ \\ \ \\ \ }
    \label{fig:test4a}
    \end{subfigure}
    \hfill
    \begin{subfigure}{0.4\textwidth}
    \centering
    \includegraphics[width=\textwidth]{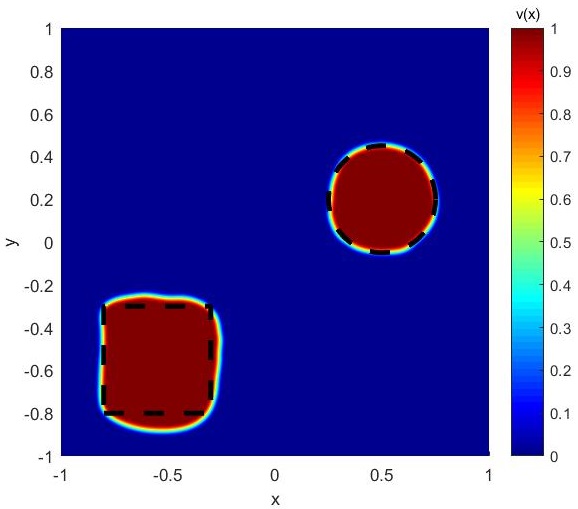}
    \caption{
    $ \varepsilon=\frac{1}{4\pi}$ for $n\leq 7999$;\vspace{0.1cm}\\ $\varepsilon=\frac{1}{16\pi}$ for $n\geq 8000$;\vspace{0.1cm}\\ 
    $\  \delta=7.5\times 10^{-2};\ n=10852.$}
    \label{fig:test4c}
    \end{subfigure}
\caption{Test 4. Reconstruction of two cavities. Dotted line represents the target cavity.}
\label{fig:Test4}    
\end{figure}

\textbf{Test 5: reconstruction of a non-convex domain.} We finally propose the reconstruction of a cavity which is not convex, see Figure \ref{fig:Test5}. We use $g^1(x,y)=(x,y)$ and $g^2(x,y)=(-y,-x)$ as Neumann boundary conditions and $\mu=0.5$ and $\lambda=1$. Parameters have the following values: $\varepsilon=\frac{1}{16\pi}$, and $\tau_n=5\times 10^{-4}$. Mesh is refined every 5000 iterations. The stopping criterium is satisfied after $n=6825$ iterations.
\begin{figure}[h!]
    \centering
    \includegraphics[width=0.5\textwidth]{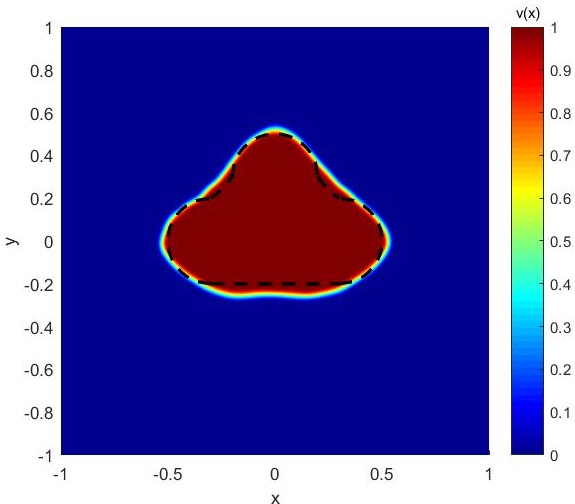}
\caption{Test 5. Reconstruction of a non-convex domain. It seems that the algorithm tends to reconstruct a convex domain. Dotted line represents the target cavity.}
\label{fig:Test5}    
\end{figure}

\subsection{Numerical experiments with $N_g=2$ and noise in the measurements.}
\textbf{Test 6: reconstruction of cavities of different shapes using noisy measurements.} Here  we run some of the numerical tests showed in the previous section, adding to the boundary measurements a normal distributed noise with zero mean and variance equal to one. We choose two different noise levels: $2\%$ and $5\%$. The results are reported in Figure \ref{fig:Test6}. \\
For the the test in Figure \ref{fig:test6a} and Figure \ref{fig:test6b}, we use values of parameters as in Test 1 and refine the mesh every $2000$ and $2500$ iterations, respectively. The reconstruction of a square-shaped cavity, that is Figure \ref{fig:test6c} and Figure \ref{fig:test6d}, are obtained by means of parameters of Test 3 - Figure \ref{fig:test3c}, refining the mesh every $3000$ and $10000$ iterations. Lastly, to get the results in Figure \ref{fig:test6e} and Figure \ref{fig:test6f} we use the same parameters of Test 4 - Figure \ref{fig:test4c}. The mesh is refined every $5000$ and $8000$ iterations, while the value of the parameter $\varepsilon$ is adapted after $8000$ and $10000$ iterations, respectively.    
In the captions of the single figures, we specify the values that are changed with respect to the ones proposed in the Tests 1, 3, and 4. 
\begin{figure}
    \centering
    \begin{subfigure}{0.4\textwidth}
    \centering
    \includegraphics[width=\textwidth]{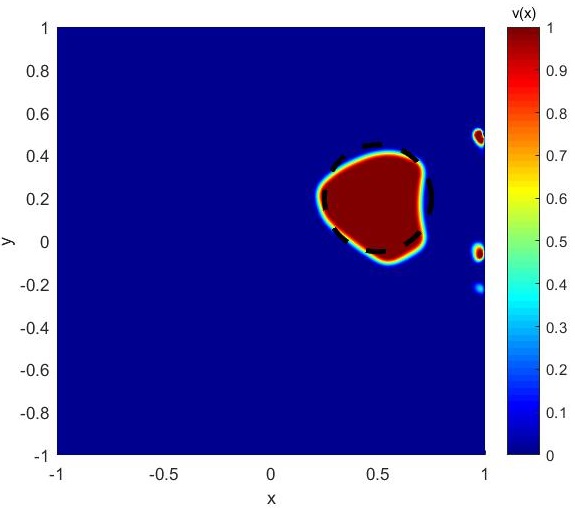}
    \caption{$(\mu,\lambda)=(0.2,1)$, $\tau_n=5\times 10^{-4}$.\vspace{0.1cm}\\ $n=11071$.  Noise level $2\%$.}
    \label{fig:test6a}
    \end{subfigure}
    \hfill
    \begin{subfigure}{0.4\textwidth}
    \centering
    \includegraphics[width=\textwidth]{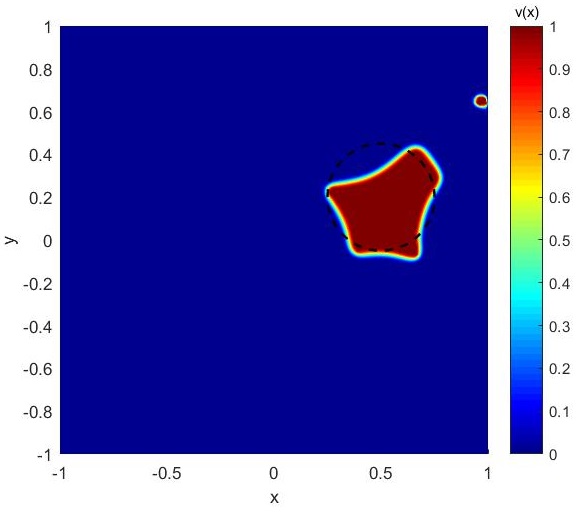}
    \caption{$(\mu,\lambda)=(0.2,1)$, $\tau_n=5\times 10^{-4}$.\vspace{0.1cm}\\ $\widetilde{\alpha}=5\times 10^{-2}$, $n=14361$.\vspace{0.1cm} \\ Noise level $5\%$.}
    \label{fig:test6b}
    \end{subfigure}
    \hfill
    \begin{subfigure}{0.4\textwidth}
    \centering
    \includegraphics[width=\textwidth]{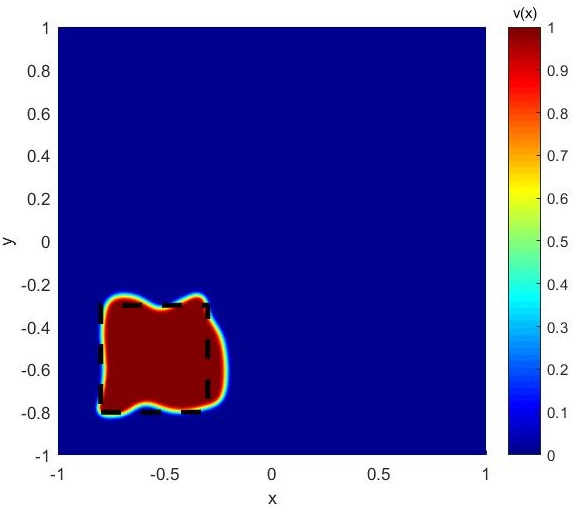}
    \caption{$(\mu,\lambda)=(0.5,1)$, $\tau_n=5\times 10^{-4}$.\vspace{0.1cm}\\ $n=23652$. 
    Noise level $2\%$.}
    \label{fig:test6c}
    \end{subfigure}
    \hfill
    \begin{subfigure}{0.4\textwidth}
    \centering
    \includegraphics[width=\textwidth]{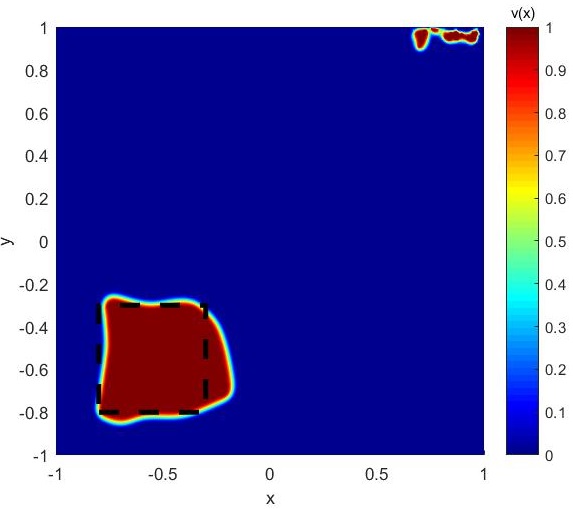}
    \caption{$(\mu,\lambda)=(0.5,1)$, $\tau_n=5\times 10^{-4}$.\vspace{0.1cm}\\ $n=15854$. 
    Noise level $5\%$.}
    \label{fig:test6d}
    \end{subfigure}
    \hfill
    \begin{subfigure}{0.4\textwidth}
    \centering
    \includegraphics[width=\textwidth]{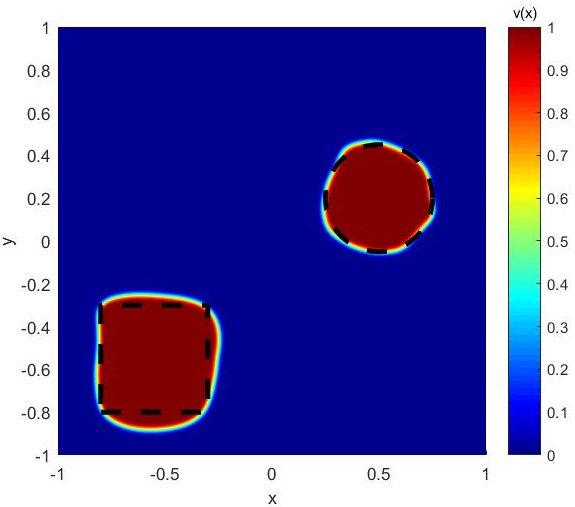}
    \caption{$(\mu,\lambda)=(0.5,1)$. \vspace{0.1cm}\\ $n=11776$. Noise level $2\%$.}
    \label{fig:test6e}
    \end{subfigure}
    \hfill
    \begin{subfigure}{0.4\textwidth}
    \centering
    \includegraphics[width=\textwidth]{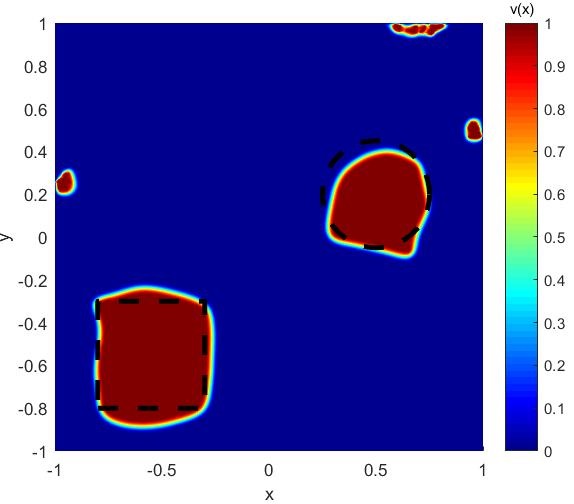}
    \caption{$(\mu,\lambda)=(0.5,1)$, $\tau_n=5\times 10^{-4}$.\vspace{0.1cm}\\ $n=25480$. Noise level $5\%$.}
    \label{fig:test6f}
    \end{subfigure}
\caption{Test 6. Reconstruction of cavities by means of noisy measurements. Dotted line represents the target cavity.}
\label{fig:Test6}    
\end{figure}

\newpage

\section*{Acknowledgments}
The authors deeply thank Dorin Bucur and Alessandro Giacomini for suggesting relevant literature and for useful discussions that led us to improve some of the results in this work. 

This research has been partially performed in the framework of the MIUR-PRIN Grant 2020F3NCPX ``Mathematics for industry 4.0 (Math4I4)''.

Andrea Aspri, Cecilia Cavaterra and Elisabetta Rocca are members of GNAMPA (Gruppo Nazionale per l’Analisi Matematica, la Probabilità e le loro Applicazioni) of INdAM (Istituto Nazionale
di Alta Matematica).

Marco Verani has been partially funded by MIUR PRIN research grants n. 201744KLJL and n. 20204LN5N5. Marco Verani is a member of GNCS (Gruppo Nazionale per il Calcolo Scientifico) of INdAM.

\bibliographystyle{plain}
\bibliography{references}

\end{document}